\newcommand{\x}{\boldsymbol{x}}
\newcommand{\y}{\boldsymbol{y}}
\newcommand{\Z}{\mathbb{Z}}
\newcommand{\Q}{\mathbb{Q}}
\newtheorem{theorem}{Theorem}[section]
\newtheorem{lemma}[theorem]{Lemma}
\theoremstyle{definition}
\newtheorem{conjecture}[theorem]{Conjecture}
\title{On the existence and non-existence of spherical $m$-stiff configurations}
\author{ 
Eiichi Bannai\thanks{Faculty of Mathematics, Kyushu University (emeritus), Japan. 
	{\tt bannai@math.kyushu-u.ac.jp}}
\and
Hirotake Kurihara\thanks{Department of Applied Science, Yamaguchi University, 2-16-1 Tokiwadai, Ube 755-8611, Japan. 
{\tt kurihara-hiro@yamaguchi-u.ac.jp}}
\and
Hiroshi Nozaki\thanks{Department of Mathematics Education, 
	Aichi University of Education, 
	1 Hirosawa, Igaya-cho, 
	Kariya, Aichi 448-8542, 
	Japan. {\tt hnozaki@auecc.aichi-edu.ac.jp}}
}
\begin{document}

\maketitle

\renewcommand{\thefootnote}{\fnsymbol{footnote}}
\footnote[0]{2020 Mathematics Subject Classification: 05B30 (33C45)
}


\begin{abstract}
This paper investigates the existence of $m$-stiff configurations in the unit sphere $S^{d-1}$, which are spherical $(2m-1)$-designs that lie on $m$ parallel hyperplanes. We establish two non-existence results: (1) for each fixed integer $m > 5$, there exists no $m$-stiff configuration in $S^{d-1}$ for sufficiently large $d$; (2) for each fixed integer $d > 10$, there exists no $m$-stiff configuration in $S^{d-1}$ for sufficiently large  $m$. 
Furthermore, we provide a complete classification of the dimensions where $m$-stiff configurations exist for $m=2,3,4,5$. 
We also determine the non-existence (and the existence) of $m$-stiff configurations in $S^{d-1}$ for small $d$ ($3 \leq d \leq 120$) with arbitrary $m$, and also for small $m$ ($6 \leq m \leq 10$) with arbitrary $d$. 
Finally, we conjecture that there is no $m$-stiff configuration in $S^{d-1}$ for $(d,m)$ with $d\geq 3$ and $m\geq 6$. 
\end{abstract}
\bigskip

\noindent
\textbf{Keywords:} Spherical design, Euclidean design, $m$-stiff configuration, Gegenbauer polynomial, Christoffel number, potential energy.

\section{Introduction}
Optimal configurations for potential energy problems have been widely studied and are of particular interest in discrete geometry; for example, \cite{B24,BDHSSarXiv,CK07,S75}. In particular, $m$-stiff configurations are worth studying due to their structured properties. 
For positive integers $m,d$, a finite subset $X$ of the $(d-1)$-dimensional unit sphere $S^{d-1}$ is called an {\it $m$-stiff} if it is a spherical $(2m-1)$-design (whose definition will be given below) and satisfies the condition that the set  
\[
\mathcal{D}_m(X) = \{z \in S^{d-1} \colon\, |D(z,X)| \leq m \}
\]  
is non-empty, where $D(z,X) = \{\langle z, x \rangle \colon\, x \in X\}$ and $\langle \cdot, \cdot \rangle$ denotes the standard inner product in $\mathbb{R}^d$. 
It seems that the name $m$-stiff was first used by Borodachov \cite{B24}. 
It has been shown that, under some differentiability assumptions on $f:[0,4] \to (-\infty, \infty)$, an $m$-stiff configuration minimizes the $f$-potential  
\[
\sum_{x \in X} f(|z-x|^2)
\]  
at every point $z$ of $\mathcal{D}_m(X)$ \cite[Theorem 4.3]{B22}.

Here, a {\it spherical $t$-design} is a finite subset $X$ of $S^{d-1}$ satisfying  
\[
\frac{1}{|X|}\sum_{x \in X} f(x)=\frac{1}{|S^{d-1}|}\int_{x \in S^{d-1}} f(x) dx
\]
for any polynomial $f$ of degree at most $t$ \cite{DGS77}, where $|S^{d-1}|$ is the volume of $S^{d-1}$. 
For a spherical $t$-design $X$, a natural lower bound is known \cite{DGS77}:  
\[
|X|\geq 
\begin{cases}
\binom{d+m-1}{m}+\binom{d+m-2}{m-1} &\text{ if $t=2m$}, \\
2\binom{d+m-2}{m-1} &\text{ if $t=2m-1$}. 
\end{cases}
\]
A spherical $t$-design is said to be {\it tight} if it attains this bound. 
Tight spherical $t$-designs are rare, and they do not exist for $t > 11$ and $d>2$ \cite{BD79,BD80}.   
If a spherical $(2m-1)$-design $X$ has exactly $m$ inner products, that is,  
\[
|\{\langle x,y \rangle \colon\, x,y \in X, x\ne y\}| =m,
\]  
then $X$ is called an {\it $m$-sharp set}. 
An $m$-sharp set is a universally optimal code \cite{CK07}, which not only minimizes certain potential energies but also maximizes the minimal pairwise distance for a fixed number of points (see \cite{CK07} for a list of known examples). Moreover, it has the structure of an association scheme \cite{DGS77}. 

The concept of spherical designs has been generalized to {\it Euclidean designs}, which are weighted designs supported on multiple concentric spheres (see the comprehensive survey \cite{BBTZ17}). A natural lower bound on Euclidean designs is known, and a Euclidean design is said to be {\it tight} if it attains this bound.

Many examples of $m$-stiff configurations exhibit structures related to tight Euclidean or spherical designs, as well as $m$-sharp sets \cite{B22,BDHSSarXiv}.  
The following results illustrate the similarities between $m$-stiff configurations and tight Euclidean designs supported on two concentric spheres.  
\begin{itemize}
\item For an $m$-stiff configuration $X$, the $m$ elements of $D(z,X)$ coincide with the zeros of the degree-$m$ Gegenbauer polynomial \cite{B22,B24,BDHSS23, FL95}.  
\item For a tight Euclidean $(2m+1)$-design $X_1 \cup X_2$, where each $X_i$ lies on a sphere,  
the $m$ elements of $\{\langle x_1, x_2 \rangle \colon\, x_1 \in X_1, x_2 \in X_2\}$ coincide with the zeros of the degree-$m$ Gegenbauer polynomial \cite{BB14}.  
\end{itemize}
We are interested in the combinatorial relationships and structural similarities among Euclidean designs, $m$-stiff configurations, and the embeddings of tight spherical designs or $m$-sharp sets into a small number of parallel hyperplanes.

This paper focuses on the existence of $m$-stiff configurations. Specifically, our main results are as follows.  
\begin{enumerate}
    \item For each fixed integer $m > 5$, there exists no $m$-stiff configuration in $S^{d-1}$ for sufficiently large $d$. 
    \item For each fixed integer $d > 10$, there exists no $m$-stiff configuration in $S^{d-1}$ for sufficiently large $m$.
\end{enumerate}
Both results are established by analyzing the zeros of Gegenbauer polynomials.  
Result (1) essentially follows from a theorem in \cite{BB14}.  
The analysis of the zeros of Gegenbauer polynomials used in \cite{BD79,BD80} to establish the non-existence of tight spherical designs is also instrumental in considering Result (2) from a similar perspective.  

Furthermore, for small values of $d$ or $m$, the $m$-stiff configurations are classified as follows.
For $m = 2, 3, 4, 5$, we provide a complete classification of the dimensions in which $m$-stiff configurations exist.
For $m=6,7,8,9,10$, we prove that no $m$-stiff configuration exists in any dimension greater than $2$. In this case, since all dimensions in which an $m$-stiff configuration could exist can be listed explicitly according to Result (1), an exhaustive computer search becomes practicable.
For $d=3,4,\ldots,120$, we prove the non-existence of $m$-stiff configurations in $S^{d-1}$ for all degrees $m > 5$.
For small values of $d\le 10$, we employ a case-by-case argument, which includes the use of the Newton polygon method.
For $11 \leq d \leq 120$, Result (2) provides a practical upper bound on $m$ for which $m$-stiff configurations may exist, allowing for an exhaustive computer search. 

Based on the above existence results, we conjecture the non-existence of $m$-stiff configurations in $S^{d-1}$ for all $(d, m)$ with $d \geq 3$ and $m \geq 6$.

\section{Gegenbauer polynomials and Christoffel numbers}
This section states basic results on Christoffel numbers of Gegenbauer polynomials. The notation follows Szeg\"{o} \cite{Sbook}. 
Jacobi's polynomials $P_n^{(\alpha,\alpha)}(x)$ with $\alpha=(d-3)/2$ are called {\it Gegenbauer polynomials} of dimension $d$, where $P_n^{(\alpha,\alpha)}(x)$ are orthogonal polynomials with respect to the weight function $(1-x^2)^\alpha$ on $x \in [-1,1]$, normalizing $P_n^{(\alpha,\alpha)}(1)=\binom{n+\alpha}{n}$ \cite[Section 4.1]{Sbook}. 

Let $x_1< \cdots <x_n$ be the zeros of $P_n^{(\alpha,\alpha)}(x)$. There uniquely exist 
real numbers $\lambda_1,\ldots, \lambda_n$ such that 
for each polynomial $\rho(x)$ of degree at most $2n-1$, 
\[
\frac{1}{h_0}\int_{-1}^1 \rho(x) (1-x^2)^\alpha dx=
\lambda_1 \rho(x_1)+\cdots+ \lambda_n \rho(x_n),
\]
where $h_0=\int_{-1}^1(1-x^2)^{\alpha}dx$. 
The numbers $\lambda_i$ are called {\it Christoffel numbers} of $P_n^{(\alpha,\alpha)}(x)$ \cite[Theorem 3.4.1]{Sbook}. 
The Christoffel numbers depend only on the inner product of orthogonal polynomials and are independent of the normalization of $P_n^{(\alpha,\alpha)}(1)$; here we use $\langle f,g\rangle=1/h_0\int_{-1}^1 f(x) g(x) (1-x^2)^\alpha dx$. 
Writing $P_n(x)=P_n^{(\alpha,\alpha)}(x)$, the Christoffel numbers can be expressed as follows,  \cite[equation (3.4.3)]{Sbook}: 
\begin{equation} \label{eq:lam}
\lambda_\nu=\frac{1}{h_0}\int_{-1}^1 \frac{P_n(x)}{P_n'(x_\nu)(x-x_\nu)}(1-x^2)^\alpha dx
= \frac{1}{h_0}\int_{-1}^1 \prod_{i\ne \nu} \frac{(x-x_i)}{(x_\nu-x_i)}(1-x^2)^\alpha dx,     
\end{equation}
and  \cite[equations (3.4.8), (4.3.3), and (4.5.2)]{Sbook}: 
\begin{equation} \label{eq:lam^-1}
\lambda_\nu^{-1}=K_n(x_\nu,x_\nu)
=\sum_{i=0}^n\frac{h_0}{h_i} \left(P_i(x_\nu)\right)^2>0,     
\end{equation}
where 
\begin{equation}
h_i=\int_{-1}^1\left(P_i(x)\right)^2(1-x^2)^\alpha dx=\frac{2^{2\alpha+1}}{2i+2\alpha+1} \frac{\Gamma(i+\alpha+1)^2}{\Gamma(i+1)\Gamma(i+2\alpha+1)}.    
\end{equation}
Note that $h_0/h_i$ is rational, and hence $\lambda_\nu^{-1}=f(x_\nu)$ for some $f \in \mathbb{Q}[x]$, see \cite[equation~(4.21.2)]{Sbook} for the expression of $P_i(x)$. 

The following lemma is used later. 
\begin{lemma}\label{lem:1}
Let $d\geq 3$ and $\alpha=(d-3)/2$. 
Let \(\lambda_1, \ldots, \lambda_n\) be the Christoffel numbers of $P_n^{(\alpha,\alpha)}(x)$, and $x_1,\ldots,x_n$ the zeros.  
    If all Christoffel numbers \(\lambda_i\) are rational numbers, then for each \(i \in \{1, \ldots, n\}\), the degree of the minimal polynomial of \(x_i\) over \(\mathbb{Q}\) is at most 2 (i.e., \(x_i\) is either rational or quadratic irrational).
\end{lemma}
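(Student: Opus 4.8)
The plan is to combine a short Galois-theoretic reduction with a strict monotonicity (separation) property of the Christoffel numbers, the latter being the real technical input. First I would record that, since $d$ is an integer, $\alpha=(d-3)/2$ is rational, so $P_n=P_n^{(\alpha,\alpha)}\in\Q[x]$ and each zero $x_\nu$ is algebraic over $\Q$. Because the weight $(1-x^2)^\alpha$ is even, $P_n$ is even or odd, so I can write $P_n(x)=x^{\epsilon}R(x^2)$ with $\epsilon\in\{0,1\}$ and $R\in\Q[y]$, whose simple roots are exactly the squares $y_\nu:=x_\nu^2$ of the nonzero zeros (the central zero $x_\nu=0$, if present, is already rational). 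Moreover the polynomial $f$ of the excerpt, with $\lambda_\nu^{-1}=f(x_\nu)$, is even, since each $P_i(x)^2$ is even; hence $f(x)=F(x^2)$ for some $F\in\Q[y]$ and $\lambda_\nu^{-1}=F(y_\nu)$. The whole statement then reduces to showing $y_\nu=x_\nu^2\in\Q$ for every $\nu$, which forces $[\Q(x_\nu):\Q]\le 2$ (as $x_\nu$ satisfies $x^2-y_\nu=0$).

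Next I would run the Galois step on $R$. Let $K$ be the splitting field of $R$ over $\Q$ and $\sigma\in\mathrm{Gal}(K/\Q)$, so $\sigma$ permutes the roots $y_\nu$. Fix $\nu$ and let $y_\mu=\sigma(y_\nu)$ be any conjugate. Applying $\sigma$ to $F(y_\nu)=\lambda_\nu^{-1}$ and using $F\in\Q[y]$ gives $F(y_\mu)=\sigma(\lambda_\nu^{-1})$; since by hypothesis every $\lambda_j$ is rational and hence fixed by $\sigma$, we obtain $F(y_\mu)=\lambda_\nu^{-1}$, i.e. $\lambda_\mu=\lambda_\nu$. Thus all Galois conjugates of $y_\nu$ correspond to zeros of $P_n$ carrying the same (rational) Christoffel number.

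The remaining and decisive point is to show that the map $y_\nu\mapsto\lambda_\nu$ is injective on the roots of $R$; equivalently, distinct nonnegative zeros of $P_n$ have distinct Christoffel numbers. Granting this, the conjugate $y_\mu$ above must equal $y_\nu$, so $y_\nu$ has no nontrivial conjugate, whence $y_\nu\in\Q$ and the proof is complete. This injectivity is precisely a strict monotonicity assertion: for $\alpha\ge0$ (that is, $d\ge3$) the Christoffel numbers of $P_n^{(\alpha,\alpha)}$ strictly decrease as $|x_\nu|$ increases, which is consistent with the classical asymptotics $\lambda_\nu\sim\frac{\pi}{n}(1-x_\nu^2)^{\alpha+1/2}$ and is visibly correct in small cases (e.g. the Legendre $n=4$ nodes). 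I expect this monotonicity to be the main obstacle, and I would attack it in one of two ways: either invoke a known monotonicity theorem for Christoffel numbers of ultraspherical polynomials, or argue directly from the confluent Christoffel--Darboux identity $\lambda_\nu^{-1}=c\,P_n'(x_\nu)P_{n+1}(x_\nu)$ (with $c$ a nonzero rational constant independent of $\nu$, its sign fixed by $\lambda_\nu^{-1}>0$) together with the interlacing of the zeros of $P_n$ and $P_{n+1}$, comparing the values at consecutive nodes.

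A useful auxiliary observation supporting this last step is that, since $P_n(x_\nu)=0$, one has $\lambda_\nu^{-1}=K_n(x_\nu,x_\nu)=K_{n-1}(x_\nu,x_\nu)$, which yields the variational description $\lambda_\nu=\min\{\langle p,p\rangle : p\in\Q[x],\ \deg p\le n-1,\ p(x_\nu)=1\}$. The delicate part, and where I expect the real work to lie, is turning these exact identities into a rigorous strict-monotonicity (hence injectivity) statement valid for all $n$ when $\alpha\ge0$; once that separation property is in hand, the Galois reduction above closes the argument immediately.
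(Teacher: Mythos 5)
Your overall strategy coincides with the paper's: Galois conjugation forces conjugate zeros to carry equal (rational) Christoffel numbers, and one then needs a separation property of the Christoffel numbers to conclude. The Galois reduction is correct, and your passage to $y_\nu=x_\nu^2$ is a harmless repackaging of the paper's version, which works with $x_\nu$ directly and derives a contradiction from \emph{three} equal Christoffel numbers (the symmetric pair $\pm x_\nu$ is allowed to share a value). The genuine gap is the decisive step you yourself flag: that distinct nonnegative zeros have distinct Christoffel numbers, i.e.\ strict unimodality of $\nu\mapsto\lambda_\nu$. You do not prove it; you offer two candidate attacks, and neither is carried out. The asymptotic $\lambda_\nu\sim\frac{\pi}{n}(1-x_\nu^2)^{\alpha+1/2}$ is only heuristic, and the confluent Christoffel--Darboux route $\lambda_\nu^{-1}=c\,P_n'(x_\nu)P_{n+1}(x_\nu)$ combined with interlacing does not obviously give strict monotonicity of the \emph{magnitudes} at consecutive nodes --- interlacing controls signs and locations, not sizes. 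As written, the proof is incomplete at its one technical point.

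For comparison, the paper closes this gap with a short self-contained argument. Using $\lambda_\nu^{-1}=c(1-x_\nu^2)\bigl(P_n'(x_\nu)\bigr)^2$ (Szeg\"{o} (15.3.1)), it introduces
\[
F(x)=\bigl(P_n^{(\alpha,\alpha)}(x)\bigr)^2+\frac{1-x^2}{n(n+2\alpha+1)}\Bigl(\tfrac{d}{dx}P_n^{(\alpha,\alpha)}(x)\Bigr)^{2},
\]
so that $F(x_\nu)$ is a fixed positive multiple of $\lambda_\nu^{-1}$, and then invokes the differential identity (Szeg\"{o} (7.32.4)) to get $F'(x)=\frac{2(2\alpha+1)x}{n(n+2\alpha+1)}\bigl(P_n'(x)\bigr)^2$. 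Since $2\alpha+1=d-2>0$, this derivative is nonnegative for $x>0$ and vanishes only at finitely many points, so $F$ is strictly increasing on $[0,1]$ and strictly decreasing on $[-1,0]$; that is exactly the injectivity of $x_\nu^2\mapsto\lambda_\nu$ you need. If you supply this argument (or cite a precise monotonicity theorem for ultraspherical Cotes numbers rather than gesturing at one), your proof is complete and is essentially the paper's.
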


\begin{proof}
 Assume \(x_i\) is an irrational number of degree at least 3 (i.e., the degree of the minimal polynomial of $x_i$ is at least 3), and let \(x_j\) and \(x_k\) be two distinct conjugates of \(x_i\) over \(\mathbb{Q}\).
Consider the field \(K\) obtained by adjoining all conjugates of \(x_i\) to $\mathbb{Q}$, and take automorphisms \(\sigma\) and \(\tau\) of \(K\) over \(\mathbb{Q}\) such that
\[
\sigma(x_i) = x_j, \quad \tau(x_i) = x_k.
\]

From \eqref{eq:lam^-1}, 
there exists \(f \in \mathbb{Q}[x]\) such that \(f(x_\nu) = \lambda_\nu^{-1}\) for each $\nu \in \{1,\ldots, n\}$. 
If $\lambda_i,\lambda_j$ are rational numbers, then
\[
\lambda_i^{-1} = \sigma(\lambda_i^{-1}) = \sigma(f(x_i)) = f(\sigma(x_i)) = f(x_j) = \lambda_j^{-1}.
\]
Similarly, by using \(\tau\), we obtain
$ \lambda_i^{-1} = \lambda_k^{-1}$.

It is known that \(\lambda_\nu\) satisfy the following unimodal property:
\begin{align*}
    &\lambda_1 < \lambda_2 < \cdots < \lambda_{n/2}=\lambda_{n/2+1} >  \cdots > \lambda_{n-1} > \lambda_n &\text{if $n$ is even;}\\
    &\lambda_1 < \lambda_2 < \cdots < \lambda_{(n+1)/2-1}<\lambda_{(n+1)/2} > \lambda_{(n+1)/2+1} >  \cdots > \lambda_{n-1} > \lambda_n &\text{if $n$ is odd,}
\end{align*}
where the corresponding zeros satisfy $x_1<x_2<\cdots<x_n$. 
We provide a proof of this fact here.  
From \cite[equation (15.3.1)]{Sbook}, there exists a positive constant \( c \) depending only on \( n \) and \( \alpha \) such that, for each \( \nu \in \{1,\ldots, n\} \), the Christoffel numbers can be expressed as  
\[
\lambda_\nu^{-1} = c(1-x_\nu^2) \left(\frac{d}{dx} P_n^{(\alpha,\alpha)}(x_\nu) \right)^{2}. 
\]
Consider the polynomial function $F$ defined by  
\[
n(n+2\alpha +1) F(x) = n(n+2\alpha +1) \left(P_n^{(\alpha,\alpha)}(x)\right)^2 + (1-x^2) \left(\frac{d}{dx} P_n^{(\alpha,\alpha)}(x) \right)^{2}.
\]
This function $F(x)$ passes through all reciprocals of constant multiples of the Christoffel numbers, given by  
\[
\left(x_\nu,\frac{1}{cn(n+2\alpha+1)} \lambda_\nu^{-1} \right). 
\]
From \cite[equation (7.32.4)]{Sbook} and $d\ge 3$ (this means $2 \alpha +1 > 0$), we have  
\[
n(n+2\alpha+1) \frac{d}{dx} F(x) = 2(2\alpha+1)x \left( \frac{d}{dx} P_n^{(\alpha,\alpha)}(x) \right)^2,
\]
which implies that $F(x)$ is monotonically decreasing for $x < 0$ and monotonically increasing for $x > 0$.  
Therefore, it follows that $\lambda_\nu$ satisfy the unimodal property.  

By the unimodal property, at most two of the Christoffel numbers can be equal. However, in our case, we have found that three of them are equal.  
Thus, the degree of $x_i$ must be at most 2.  
\end{proof}

\section{Non-existence of $m$-stiff configurations} \label{sec:3}
In this section, we prove that for a given dimension, no $m$-stiff configuration exists for sufficiently large $m$. Borodachov \cite{B24} provided a necessary and sufficient condition for the existence of $m$-stiff configurations. This result plays an important role in the present paper as our starting point.
\begin{theorem}[{\cite[Theorem 6.8]{B24}}]
    \label{thm:exists_m-stiff}
    Suppose $m \geq 1$, $d\geq 2$. Let $x_1,\ldots, x_m$ be the zeros of $P_m^{(\alpha,\alpha)}(x)$ with $\alpha=(d-3)/2$. 
    Let $\varphi_k(x)=\prod_{i\ne k} (x-x_i)/(x_k-x_i)$, which satisfies $\varphi_k(x_\ell)=\delta_{k\ell}$. 
    Let 
    \[
    a_0(\varphi_k)=\frac{1}{h_0}\int_{-1}^1\varphi_k(x) (1-x^2)^\alpha dx. 
    \]
    There exists an $m$-stiff configuration of $S^{d-1}$ if and only if 
     $a_0(\varphi_k)$ is a positive rational number for each $k \in \{1,\ldots,m\}$. 
\end{theorem}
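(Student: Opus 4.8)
The plan is to recognize that $a_0(\varphi_k)$ is exactly the $k$-th Christoffel number $\lambda_k$ of $P_m^{(\alpha,\alpha)}$ by the second form in \eqref{eq:lam}, and that $\lambda_k>0$ always holds by \eqref{eq:lam^-1}; thus positivity is automatic and only rationality carries content. The whole theorem should then be read as the statement that an $m$-stiff configuration exists precisely when the one-dimensional Gaussian quadrature rule attached to $P_m^{(\alpha,\alpha)}$ has rational weights. The two implications are a rigidity argument and an explicit construction, respectively.

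For necessity, suppose $X$ is $m$-stiff and let $z$ satisfy $|D(z,X)|\le m$. Push the design property forward along the height map $x\mapsto \langle z,x\rangle$, using that the image of the uniform measure on $S^{d-1}$ is proportional to $(1-t^2)^\alpha\,dt$. Writing $y_1<\cdots<y_\ell$ for the distinct heights ($\ell=|D(z,X)|\le m$) and $N_j$ for the number of points of $X$ at height $y_j$, I obtain, for every univariate $\rho$ with $\deg\rho\le 2m-1$,
\[
\sum_{j=1}^\ell \frac{N_j}{|X|}\,\rho(y_j)=\frac{1}{h_0}\int_{-1}^1 \rho(t)(1-t^2)^\alpha\,dt .
\]
This is a positive-weight quadrature with $\ell$ nodes that is exact up to degree $2m-1$. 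Testing on $\prod_j(t-y_j)^2$ (degree $2\ell$) shows an $\ell$-node rule cannot be exact beyond degree $2\ell-1$, forcing $\ell\ge m$, hence $\ell=m$; and an $m$-node rule exact to degree $2m-1$ is the Gaussian rule, so $y_j=x_j$ and $N_j/|X|=\lambda_j=a_0(\varphi_j)$. Each weight is therefore a positive rational.

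For sufficiency I would build a layered configuration. Since $\sum_k\lambda_k=1$ (apply the quadrature to $\rho\equiv 1$) and each $\lambda_k$ is a positive rational, write $\lambda_k=n_k/N$ with positive integers $n_k$ summing to $N$; replacing $N$ by a large integer multiple, I may assume every $n_k$ is as large as needed. On the slice $\{\langle e_d,\cdot\rangle=x_k\}\cap S^{d-1}$, a copy of $S^{d-2}$ of radius $\sqrt{1-x_k^2}>0$ (all zeros of $P_m^{(\alpha,\alpha)}$ lie in $(-1,1)$), place a spherical $(2m-1)$-design with exactly $n_k$ points, which exists once $n_k$ is large by Seymour--Zaslavsky-type cardinality results, and let $X$ be the union. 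For $f$ of degree $\le 2m-1$, averaging over each slice with its design and observing that the slice average $g(t)=|S^{d-2}|^{-1}\int_{S^{d-2}}f(\sqrt{1-t^2}\,\xi,t)\,d\xi$ is a univariate polynomial in $t$ of degree $\le 2m-1$, I get
\[
\frac{1}{N}\sum_{x\in X}f(x)=\sum_k \frac{n_k}{N}\,g(x_k)=\sum_k\lambda_k\,g(x_k)=\frac{1}{h_0}\int_{-1}^1 g(t)(1-t^2)^\alpha\,dt=\frac{1}{|S^{d-1}|}\int_{S^{d-1}}f,
\]
where the third equality is precisely the Gaussian quadrature for $P_m^{(\alpha,\alpha)}$. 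Taking $z=e_d$ gives $D(z,X)=\{x_1,\dots,x_m\}$, so $|D(z,X)|=m$ and $X$ is $m$-stiff.

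The main obstacle is the sufficiency direction, and within it two points deserve care. First, the layered-design verification hinges on the fact that slice averages of degree-$\le\ell$ polynomials are again polynomials of degree $\le\ell$ in the height variable; this is the conceptual step that lets the problem collapse onto the scalar Gaussian quadrature. Second, and more genuinely external, is realizing the \emph{prescribed} cardinalities $n_k$, which relies on the existence of spherical $(2m-1)$-designs of every sufficiently large cardinality on $S^{d-2}$; this is where the argument is least self-contained and where the hypothesis $d\ge 3$ is convenient (the degenerate case $d=2$, with $\alpha=-1/2$, should be handled separately and explicitly).
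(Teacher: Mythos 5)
The paper does not prove this statement itself---it is quoted verbatim from Borodachov \cite[Theorem 6.8]{B24}---so there is no internal proof to compare against. Your argument is correct and is essentially the standard one behind that result: necessity via pushing the design property forward to a positive $\ell$-node quadrature exact to degree $2m-1$ and invoking the rigidity/uniqueness of the Gaussian rule (so the heights are the zeros of $P_m^{(\alpha,\alpha)}$ and the normalized slice cardinalities are the Christoffel numbers, hence positive rationals), and sufficiency via a layered construction on the parallel slices using the existence of spherical $(2m-1)$-designs of prescribed (sufficiently large, or common-multiple) cardinalities on $S^{d-2}$, which is exactly why the paper remarks immediately after the theorem that the condition "remains an existence theorem that depends on the existence of spherical designs" and points to \cite{SZ84}. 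The only points needing the care you already flag are the realization of the prescribed cardinalities $n_k$ (a union of generically rotated copies of one Seymour--Zaslavsky design suffices, avoiding any appeal to the stronger ``all large $N$'' theorem) and the degenerate case $d=2$, where the slices are $0$-spheres and one checks directly that the equal Gauss--Chebyshev weights $1/m$ produce the regular $2m$-gon.
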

Even if the necessary and sufficient condition holds, this remains an existence theorem that depends on the existence of spherical designs and does not provide an explicit construction (see \cite{SZ84} for an existence theorem of spherical $t$-designs). 

For the case where $d=2$ or $m=1$, the $m$-stiff configurations can be easily classified.   
When $d=2$, the values $a_0(\varphi_k)$ are all positive rational numbers for any $m\geq 1$. Indeed, the regular $2m$-gon in $S^1$ is the $m$-stiff configuration. 
For $m = 1$, we have $a_0(\varphi_1) = 1$, and 
$X \subset S^{d-1}$ is a 1-stiff configuration 
if and only if it is a spherical 1-design
in some $(d-1)$-dimensional subspace of $\mathbb{R}^d$ (i.e., a set whose centroid is at the origin).

From \eqref{eq:lam}, the value $a_0(\varphi_k)$ coincides with
the Christoffel number $\lambda_k$. 
For $d\geq 3$, if $\lambda_k$ are all rationals, the zeros $x_i$ of $P_m^{(\alpha,\alpha)}(x)$ are rational or quadratic irrational from Lemma~\ref{lem:1}. In this case, noting that 
$P_m^{(\alpha,\alpha)}(x)$ is either an even or odd function, the square of each zero $x_i$ becomes a rational number. 
For the non-existence of $m$-stiff configurations, 
we prove that there exists a zero of $P_m^{(\alpha,\alpha)}(x)$ whose square is irrational. 

The following theorem is derived from a result in \cite{BB14}. 
\begin{theorem}
    For each fixed integer $m>5$, there exists no $m$-stiff configuration in $S^{d-1}$ for sufficiently large $d$. 
\end{theorem}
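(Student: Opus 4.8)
The plan is to reduce the existence question to a rationality condition on the zeros of $P_m^{(\alpha,\alpha)}(x)$ and then rule that condition out for large $d$. By Theorem~\ref{thm:exists_m-stiff}, an $m$-stiff configuration in $S^{d-1}$ exists if and only if every $a_0(\varphi_k)$ is a positive rational, and by \eqref{eq:lam} these quantities coincide with the Christoffel numbers $\lambda_k$. Since \eqref{eq:lam^-1} already forces $\lambda_k>0$, positivity is automatic and the only binding constraint is that all $\lambda_k\in\Q$. Granting this, Lemma~\ref{lem:1} shows each zero $x_k$ is rational or quadratic irrational; as $P_m^{(\alpha,\alpha)}$ is even or odd, the pairing $x\mapsto -x$ of its zeros then forces every $x_k^2$ to be rational. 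Hence it suffices to prove that, for each fixed $m>5$ and all sufficiently large $d$, at least one zero of $P_m^{(\alpha,\alpha)}$ has irrational square.

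To this end I would pass to the polynomial $g_m(y)$ of degree $s=\lfloor m/2\rfloor$ whose roots are the squares $x_k^2$ of the zeros (discarding, for odd $m$, the harmless root $x=0$): writing $P_m^{(\alpha,\alpha)}(x)=\sum_i b_i x^{2i}$ or $x\sum_i b_i x^{2i}$, one has $g_m(y)=\sum_i b_i y^i$, whose coefficients are explicit rational functions of $\alpha=(d-3)/2$. The requirement that every $x_k^2\in\Q$ is exactly that $g_m$ splits into linear factors over $\Q$ (for $m\le 5$ one has $s\le 2$, and a quadratic with square discriminant does split, consistent with the existence results for small $m$). This is precisely the obstruction studied, in the language of tight Euclidean $(2m+1)$-designs on two concentric spheres, in \cite{BB14}: there the $m$ inner products between the two layers are the zeros of the same Gegenbauer polynomial, and the required integrality forces their squares to be rational. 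I would therefore invoke the theorem of \cite{BB14} that for fixed $m>5$ and $d$ large these squares cannot all be rational, which produces an irrational $x_k^2$ and closes the argument.

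The mechanism behind the input from \cite{BB14}, and the step I expect to be the main obstacle, is to secure the irrationality uniformly in $d$ rather than only in the limit. The naive asymptotics do not suffice: as $\alpha\to\infty$ the zeros cluster at the origin on the scale $\alpha^{-1/2}$ and, after rescaling, converge to the zeros of the Hermite polynomial $H_m$, whose squares are generically irrational; but a family of rationals can converge to an irrational limit, so the limit alone yields no contradiction. The robust route is algebraic: if $g_m$ split over $\Q$, its discriminant $\mathrm{disc}_y(g_m)$ -- a polynomial in $\alpha$ with rational coefficients -- would be the square of a rational for every admissible $\alpha$. For $m>5$ one has $s\ge 3$, so this discriminant has large degree in $\alpha$; assuming it is not a perfect square in $\Q[\alpha]$ (and is squarefree), the hyperelliptic curve $w^2=\mathrm{disc}_y(g_m)(\alpha)$ has genus at least $2$, whence Faltings's theorem leaves only finitely many $\alpha$, and hence finitely many $d$, for which it can take a rational square value. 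Establishing these structural properties of the discriminant uniformly in $m$ is the hard technical heart of the argument, and it is exactly what the cited result of \cite{BB14} supplies.
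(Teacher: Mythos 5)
Your proposal is correct and follows essentially the same route as the paper: reduce via Theorem~\ref{thm:exists_m-stiff} and \eqref{eq:lam} to the rationality of the Christoffel numbers, apply Lemma~\ref{lem:1} and the parity of $P_m^{(\alpha,\alpha)}$ to force every $x_k^2\in\Q$, and then invoke Lemma~3.1 of \cite{BB14}, which gives a constant $C=C(m)$ with $d<C$ whenever all squared zeros are rational. Your closing speculation about the internal mechanism of \cite{BB14} (squarefree discriminants and Faltings) is not how that bound is actually obtained --- it proceeds via the Newton polygon and integral points on Mordell-type elliptic curves $Ay^2-Bx^3=2$ --- but since you invoke the cited result as a black box this does not affect the argument.
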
 
\begin{proof}
Lemma 3.1 in \cite{BB14} states that
if the square of every zero of $P_m^{(\alpha,\alpha)}(x)$ is rational, then there exists a constant $C$, which depends only on $m$, such that $d<C$. This result implies the theorem.
\end{proof}
Unfortunately, the upper bound $C$ is not very effective.
Indeed, determining the integer solutions of many elliptic curves is required to settle the value of $C$. 

We employ a strategy similar to that in \cite{BB14,BD79,BD80} to establish the non-existence of $m$-stiff configurations for sufficiently large $m$. It is a method based on analyzing the zeros or coefficients of the polynomial $S_m(x)$ defined below.
Let $C_m^{d/2}(x)=c P_m^{(\alpha+1,\alpha+1)}(x)$ for $\alpha=(d-3)/2$, where $c$ is some constant.  
It is noteworthy that $C_m^{d/2}(x)$ is a Gegenbauer polynomial of dimension $d+2$. Define $S_m(x)$ as the monic polynomial whose zeros are the reciprocals of the non-zero zeros of $C_m^{d/2}(x)$, which does not depend on the constant $c$. 
For $X=x^2$ and $n=\lfloor m/2 \rfloor$, 
Bannai--Damerell \cite{BD80} provided the expression of the polynomial $S_m(X)$ as follows:  
\[
S_m(X)=X^n+\sum_{r=1}^n(-1)^r u_{r} X^{n-r}, 
\]
where 
\[
u_{r}=\binom{n}{r}
\frac{h(h+2)(h+4) \cdots (h+2r-2)}{1\cdot 3 \cdot 5 \cdots (2r-1)}, \qquad 
h=d+2n
\]
for $m=2n$, and
\[
u_{r}=\binom{n}{r}
\frac{h(h+2)(h+4) \cdots (h+2r-2)}{1\cdot 3 \cdot 5 \cdots (2r+1)}, \qquad 
h=d+2n+2
\]
for $m=2n+1$. 

Let ${\rm ord}_p(\gamma )$ be the highest power of a prime $p$ dividing $\gamma \in \mathbb{Z}\setminus \{0\}$ and ${\rm ord}_p(0)=\infty$. 
For a rational number $\gamma =a/b$ with $a,b \in \mathbb{Z}$, let ${\rm ord}_p(\gamma )={\rm ord}_p(a )-{\rm ord}_p(b)$. 
\begin{lemma} \label{lem:m=2n}
Suppose $m=2n$ with $n\geq 1$. 
    If a zero of $S_m(X)$ is rational, then 
    the zero is an integer. 
\end{lemma}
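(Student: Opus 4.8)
The plan is to argue via $p$-adic valuations, exploiting the fact that $S_m(X)$ is \emph{monic}. First I would record the shape of the coefficients: writing $u_r = N_r/(2r-1)!!$ with $N_r = \binom{n}{r} h(h+2)\cdots(h+2r-2)$ a positive integer and $(2r-1)!! = 1\cdot 3\cdots(2r-1)$ the denominator, one has ${\rm ord}_p(u_r) \geq -{\rm ord}_p((2r-1)!!)$ for every prime $p$, since the numerator is an integer. Thus the entire argument reduces to controlling the $p$-adic size of the odd double factorial.

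The key denominator estimate I would establish is that ${\rm ord}_p((2r-1)!!) \leq r-1$ for every prime $p$ and every $r \geq 1$, equivalently ${\rm ord}_p(u_r) \geq -(r-1) > -r$. For $p=2$ this is immediate, as $(2r-1)!!$ is odd. For odd $p$ I would use the identity $(2r-1)!! = (2r)!/(2^r\, r!)$ together with Legendre's formula ${\rm ord}_p(N!) = (N - s_p(N))/(p-1)$, where $s_p$ denotes the base-$p$ digit sum, to obtain ${\rm ord}_p((2r-1)!!) = (r - s_p(2r) + s_p(r))/(p-1)$. Bounding $s_p(2r) \geq 1$ and $s_p(r) \leq r$ gives a value at most $(2r-1)/(p-1) \leq (2r-1)/2 < r$; being a nonnegative integer, it is therefore at most $r-1$.

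With the estimate in hand, the final step is a Newton-polygon / ultrametric balancing argument. Suppose $X_0 \in \mathbb{Q}$ is a root with ${\rm ord}_p(X_0) = -e$ for some prime $p$ and some $e \geq 1$, i.e.\ $p$ divides the denominator of $X_0$. Rewriting $S_m(X_0) = 0$ as $X_0^n = -\sum_{r=1}^n (-1)^r u_r X_0^{n-r}$ and comparing $p$-adic valuations, the left-hand side has valuation $-ne$, while the non-archimedean inequality forces some term on the right to have valuation at most $-ne$. This yields an index $r$ with ${\rm ord}_p(u_r) - (n-r)e \leq -ne$, hence ${\rm ord}_p(u_r) \leq -re \leq -r$, contradicting the estimate ${\rm ord}_p(u_r) \geq -(r-1)$. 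Therefore ${\rm ord}_p(X_0) \geq 0$ for every prime $p$, so $X_0$ is an integer.

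The main obstacle is the uniform denominator estimate ${\rm ord}_p((2r-1)!!) \leq r-1$: one must verify it holds for \emph{all} $r$ and \emph{all} primes $p$ simultaneously, and confirm that the slack is exactly sufficient to survive the valuation balancing. The inequality is in fact tight (for instance at $p=3$, $r=2$, where ${\rm ord}_3(3!!)=1=r-1$), so the gap ${\rm ord}_p(u_r) \geq -(r-1) > -r = -re\big|_{e=1}$ is the precise margin needed, and no weaker bound would close the argument. The remaining steps are a routine application of the non-archimedean triangle inequality to a monic polynomial.
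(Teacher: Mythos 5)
Your proof is correct and follows essentially the same route as the paper's: both rest on the estimate ${\rm ord}_p((2r-1)!!)\leq r-1$ (hence ${\rm ord}_p(u_r)\geq -(r-1)$) and then balance valuations against the monic leading term of $S_m(X)$. The only difference is presentational — the paper clears denominators with a minimal integer $a$ and argues by divisibility, whereas you invoke the ultrametric inequality directly, which is the same argument in $p$-adic language.
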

\begin{proof}
    Let $a$ be the least integer such that $au_i$ is an integer for each $i\in \{1,\ldots,n\}$. 
    Let $X=\beta/\gamma$ be a zero of $S_m(X)$, where $\beta$ and $\gamma$ are coprime integers. It follows that  
    \begin{equation} \label{eq:au_r}
    0=a \gamma^n S_m(\beta/\gamma)=a \beta^n+\sum_{r=1}^n(-1)^r au_{r} \gamma^r \beta^{n-r}. 
    \end{equation}
    Since $\gamma$ and $\beta$ are coprime, $a$ is divisible by $\gamma$. Assume that $\gamma$ is not equal to $1$. Then, there exists a prime divisor $p$ that divides both $a$ and $\gamma$. 
    From the minimality of $a$, there exists $i \in \{1,\ldots, n\}$ such that $au_i$ is not divisible by $p$. 

    For $p=2$, the denominator of $u_i$ is odd, and hence  ${\rm ord}_2 (au_i)\geq 1$ for each $i \in \{1,\ldots, n\}$, which is a contradiction. 

    For $p\geq 3$, one has 
    \begin{align*}
    {\rm ord}_p((2r- 1)!!)&= {\rm ord}_p\frac{(2r- 1)!}{2^r r!}
    =\sum_{i\geq 1} \left\lfloor \frac{2r-1}{p^i}\right\rfloor-\sum_{i\geq 1}\left\lfloor \frac{r}{p^i} \right\rfloor \\
    &\leq \sum_{i\geq 1}  \frac{2r-1}{p^i}=\frac{2r-1}{p-1}\leq \frac{2r-1}{2}    
    \end{align*}
    and hence ${\rm ord}_p((2r- 1)!!)\leq  r-1$ for $r\geq 1$. 
    This implies that ${\rm ord}_p(u_r)\geq -r+1$. 
    Therefore, ${\rm ord}_p(u_r\gamma^r)\geq (-r+1)+r=1$ for each $r\geq 1$. 
    If ${\rm ord}_p(a) =k$ holds, then ${\rm ord}_p(au_r\gamma^r)\geq k+1$ for each $r$. This implies that ${\rm ord}_p(a\beta^n)\geq k+1$ from \eqref{eq:au_r}. 
    However, since $\beta$ is coprime to $\gamma$ and not divisible by $p$, it follows that ${\rm ord}_p(a\beta^n)= k$, which is a contradiction. 

    Therefore, we can conclude $\gamma=1$. 
\end{proof}
\begin{lemma} \label{lem:m=2n+1}
    Suppose $m=2n+1$ with $n\geq 1$. 
    If a zero of $S_m(X)$ is rational, then 
    the zero can be expressed by $k/3$ for some $k\in \mathbb{Z}$. 
\end{lemma}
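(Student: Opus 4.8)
The plan is to mimic the proof of Lemma~\ref{lem:m=2n}, but now tracking the prime $p=3$ separately because the odd double factorial in the denominator of $u_r$ runs up to $(2r+1)!!$ rather than $(2r-1)!!$. As before, let $a$ be the least positive integer with $a u_i \in \mathbb{Z}$ for all $i$, and write a rational zero as $X=\beta/\gamma$ with $\gcd(\beta,\gamma)=1$. The identity
\[
0 = a\gamma^n S_m(\beta/\gamma) = a\beta^n + \sum_{r=1}^n (-1)^r a u_r \gamma^r \beta^{n-r}
\]
forces $\gamma \mid a$. So I must bound, for each prime $p$, the $p$-adic valuation of the denominator of $u_r$, i.e.\ $\mathrm{ord}_p((2r+1)!!)$, since the numerator $h(h+2)\cdots(h+2r-2)$ is an integer.

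**First I would** handle $p=2$ exactly as in Lemma~\ref{lem:m=2n}: the denominator $(2r+1)!!$ is odd, so $\mathrm{ord}_2(u_r)\geq 0$ and $\gamma$ carries no factor of $2$. **Next I would** treat primes $p\geq 5$ by the same counting estimate used before:
\[
\mathrm{ord}_p((2r+1)!!) = \sum_{i\geq 1}\left(\left\lfloor\frac{2r+1}{p^i}\right\rfloor - \left\lfloor\frac{r}{p^i}\right\rfloor\right) \leq \sum_{i\geq 1}\frac{2r+1}{p^i} = \frac{2r+1}{p-1} \leq \frac{2r+1}{4},
\]
which for $p\geq 5$ gives $\mathrm{ord}_p((2r+1)!!)\leq r$ once $2r+1 \leq 4r$, true for $r\geq 1$; sharpening the floor bound to the integer $r$ then yields $\mathrm{ord}_p(u_r\gamma^r)\geq 1$, and the valuation contradiction at $a\beta^n$ goes through verbatim. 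So no prime $p\geq 5$ divides $\gamma$.

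**The main obstacle** is the prime $p=3$, where the crude bound $\frac{2r+1}{p-1}=\frac{2r+1}{2}$ only gives $\mathrm{ord}_3((2r+1)!!)\leq r$, hence $\mathrm{ord}_3(u_r\gamma^r)\geq 0$ rather than $\geq 1$ — too weak to exclude a factor of $3$ in $\gamma$. This is exactly the gap that forces the weaker conclusion: $3$ may divide $\gamma$, but I need to show $9\nmid\gamma$. **The key step** is therefore a finer analysis at $p=3$: I would show that $\mathrm{ord}_3((2r+1)!!)\leq r$ with equality only in controlled cases, so that $\mathrm{ord}_3(u_r\gamma^r)\geq 0$, and then argue that $\mathrm{ord}_3(\gamma)\leq 1$. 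Concretely, if $\mathrm{ord}_3(\gamma)=s\geq 1$ then $\mathrm{ord}_3(a)\geq s$, and one examines the term-by-term valuations in the expansion: the only terms that can meet the minimal valuation $\mathrm{ord}_3(a\beta^n)$ are those $r$ for which $\mathrm{ord}_3(u_r)=-r$, i.e.\ $\mathrm{ord}_3((2r+1)!!)=r$. A careful count of when the double factorial achieves this maximal $3$-adic valuation (combined with the constraint that $\gamma^r$ contributes $rs$ and $a u_r$ contributes its own valuation) shows the supremum of $\mathrm{ord}_3(\gamma)$ is $1$; equivalently, writing the zero as $k/3$ with $k\in\mathbb{Z}$ is forced. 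I would verify the base valuation count on small $r$ by hand and the general pattern via the Legendre-type sum above, expecting the equality analysis of $\mathrm{ord}_3((2r+1)!!)=r$ to be the delicate part.
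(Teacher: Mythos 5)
Your overall strategy is the same as the paper's: repeat the valuation argument of Lemma~\ref{lem:m=2n} with the denominator $(2r+1)!!$ in place of $(2r-1)!!$, dispose of $p=2$ by oddness, of $p\geq 5$ by the Legendre-type estimate, and isolate $p=3$ as the one prime that cannot be excluded from $\gamma$. One minor slip first: for $p\geq 5$ you assert ${\rm ord}_p((2r+1)!!)\leq r$, which only yields ${\rm ord}_p(u_r\gamma^r)\geq 0$ and is not enough to run the contradiction. What you need --- and what your own displayed inequality delivers once you take the floor, since $(2r+1)/4<r$ for $r\geq 1$ --- is ${\rm ord}_p((2r+1)!!)\leq \lfloor (2r+1)/4\rfloor\leq r-1$, whence ${\rm ord}_p(u_r)\geq -r+1$ and ${\rm ord}_p(u_r\gamma^r)\geq 1$. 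This is exactly how the paper phrases it, so your argument here is easily repaired.

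The more serious problem is that the decisive step, showing ${\rm ord}_3(\gamma)\leq 1$, is never actually carried out: you announce a ``careful count of when $(2r+1)!!$ attains ${\rm ord}_3=r$'' and an ``equality analysis'' that you yourself flag as the delicate part you have not done. As written this is a plan, not a proof, and it is a plan for a harder argument than is needed. The paper's point is that once you \emph{assume} ${\rm ord}_3(\gamma)=\ell\geq 2$, the crude bound ${\rm ord}_3((2r+1)!!)\leq (2r+1)/2$, hence $\leq r$, already gives
\[
{\rm ord}_3(u_r\gamma^r)\ \geq\ -r+\ell r\ \geq\ -r+2r\ =\ r\ \geq\ 1,
\]
so every term $au_r\gamma^r$ has $3$-adic valuation at least ${\rm ord}_3(a)+1$, while $a\beta^n$ has valuation exactly ${\rm ord}_3(a)$ because $3\nmid\beta$; the identity $a\beta^n=-\sum_{r}(-1)^r au_r\gamma^r\beta^{n-r}$ is then impossible. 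No analysis of when ${\rm ord}_3((2r+1)!!)=r$ is required --- the factor $\gamma^r$ with $\ell\geq 2$ contributes $2r$ and swamps the worst case $-r$ automatically. Since you supplied neither this argument nor any other complete one for ${\rm ord}_3(\gamma)\leq 1$, the proof has a genuine gap at precisely the point that distinguishes this lemma from Lemma~\ref{lem:m=2n}.
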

\begin{proof}
 Let $a$ be the least integer such that $au_i$ is an integer for each $i\in \{1,\ldots,n\}$. Let $X=\beta/\gamma$ be a zero of $S_m(X)$, where $\beta$ and $\gamma$ are coprime integers. 
 By the same manner as the proof of Lemma \ref{lem:m=2n}, if $\gamma\ne 3^\ell$ ($\ell \geq 0$), then there exists a prime divisor $p\ne 3$ that divides both $a$ and $\gamma$. From the minimality of $a$, there exists $i \in \{1,\ldots, n\}$ such that $au_i$ is not divisible by $p$.  
    
    For $p=2$, the denominator of $u_i$ is odd, and hence  ${\rm ord}_2 (au_i)\geq 1$ for each $i \in \{1,\ldots, n\}$, which is a contradiction. 

    For $p\geq 5$, one has 
    \begin{align*}
    {\rm ord}_p((2r+ 1)!!)&= {\rm ord}_p\frac{(2r+ 1)!}{2^r r!}=\sum_{i\geq 1} \left\lfloor \frac{2r+1}{p^i}\right\rfloor-\sum_{i\geq 1}\left\lfloor \frac{r}{p^i} \right\rfloor \\
    &\leq \sum_{i\geq 1}  \frac{2r+1}{p^i}\leq  \frac{2r+1}{p-1}\leq \frac{2r+1}{4},    
    \end{align*}
    and hence ${\rm ord}_p((2r+ 1)!!)\leq \lfloor (2r+1)/4) \rfloor\leq r-1$ for $r\geq 1$. 
    This implies that ${\rm ord}_p(u_r)\geq -r+1$. 
    In the same manner as the proof of Lemma \ref{lem:m=2n}, this leads to a contradiction. 

Therefore, $\gamma=3^\ell$ for some $\ell\geq 0$. 
We prove ${\rm ord}_3(\gamma) \leq 1$, namely $\ell=0,1$.  
Assume $\ell \geq 2$. 
As in the above calculation, for $p=3$, 
\[
{\rm ord}_3((2r+ 1)!!)\leq  \frac{2r+1}{p-1}=r+\frac{1}{2}, 
\]
and hence ${\rm ord}_3((2r+ 1)!!) \leq r$ for $r\geq 1$. 
This implies that ${\rm ord}_3(u_r)\geq -r$, and ${\rm ord}_3(u_r \gamma^r)\geq -r+2r=r\geq 1$. Thus, this leads to a contradiction. 
Therefore, $\ell=0,1$ holds. 
\end{proof}

By Lemma \ref{lem:m=2n}, for the case $m=2n$, 
if each zero of $C_m^{d/2}(x)$ is of degree at most 2, then all coefficients $u_r$ in $S_m(X)$ should be integers. 
By Lemma \ref{lem:m=2n+1}, for the case $m=2n+1$, 
if each zero of $C_m^{d/2}(x)$ is of degree at most 2, then all coefficients $u_r$ in $S_m(X)$ should satisfy $u_r =k/3^\ell$ with some $k,\ell \in \mathbb{Z}$. 
If these conclusions lead to a contradiction, then we establish the non-existence of an $m$-stiff configuration in $S^{d+1}$. 
The constant term of $S_m(X)$ is written as follows: 
For $m=2n$,  
\[
u_{n,d}^{-}=\frac{h(h+2)(h+4) \cdots (h+2n-2)}{1\cdot 3 \cdot 5 \cdots (2n-1)}  
\]
with $h=d+2n$. 
For $m=2n+1$, we define
\[
u_{n,d}^{+}=\frac{h(h+2)(h+4) \cdots (h+2n-2)}{1\cdot 3 \cdot 5 \cdots (2n+1)}
\]
with $h=d+2n+2$. It is noteworthy that 
$u_{n,d}^+=u_{n,d+2}^-/(2n+1)$. 
We prove that these coefficients $u_{n,d}^\pm$ are not 
integers for given $d$ and sufficiently large $m$. 

We prove the theorem below in the following subsections.
\begin{theorem}
    \begin{enumerate}
        \item For each fixed even integer $d\geq 8$, 
        there is no $2n$-stiff configuration in $S^{d-1}$ for sufficiently large $n$. 
        \item For each fixed even integer $d\geq 12$, 
        there is no $(2n+1)$-stiff configuration in $S^{d-1}$ for sufficiently large $n$. 
        \item For each fixed odd integer $d\geq 3$, 
        there is no $m$-stiff configuration in $S^{d-1}$ for sufficiently large $m$. 
    \end{enumerate}
\end{theorem}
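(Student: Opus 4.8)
The plan is to reduce each of the three assertions, via the criterion recorded just before the statement, to a purely arithmetic fact about the constant term of $S_m(X)$, and then to settle that fact by a valuation argument made uniform in $n$ through the finiteness of consecutive smooth integers. For the reduction, suppose an $m$-stiff configuration existed. Then by Theorem~\ref{thm:exists_m-stiff} the Christoffel numbers would all be rational, so by Lemma~\ref{lem:1} together with the evenness or oddness of $P_m^{(\alpha,\alpha)}$ every zero of $S_m(X)$ would be rational. For $m=2n$, Lemma~\ref{lem:m=2n} forces each such zero to be an integer, so every coefficient $u_r$, being up to sign an elementary symmetric function of integers, is an integer; in particular $u_{n,d}^-$ is an integer. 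For $m=2n+1$, Lemma~\ref{lem:m=2n+1} forces each zero into $\tfrac13\mathbb{Z}$, so $3^r u_r\in\mathbb{Z}$ and the denominator of $u_{n,d}^+$ is a power of $3$. Hence it suffices to show, for the stated $d$ and all large $n$, that $u_{n,d}^-$ is not an integer (even $m$), respectively that $u_{n,d}^+$ carries a prime $\neq 3$ in its denominator (odd $m$); Part~$(3)$ uses both, one for each parity of $m$.

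The next step is a closed form. For even $d$, writing $e=d/2$ and cancelling factorials gives $u_{n,d}^-=2^{2n}\,\dfrac{(2n+1)(2n+2)\cdots(2n+e-1)}{(n+1)(n+2)\cdots(n+e-1)}$, a power of two times a quotient of two blocks of consecutive integers; using $u_{n,d}^+=u_{n,d+2}^-/(2n+1)$ one gets a companion form of the same shape, and for odd $d$ an analogous quotient of products of odd numbers. The local input is this: if an odd prime $p$ divides a denominator factor $n+i$, then $2n+j\equiv j-2i\pmod p$, so $p$ divides a numerator factor only when $p\mid(j-2i)$ for some admissible index $j$. If $i$ lies in the upper half of the index range, then $j=2i$ is out of range and each remaining $j-2i$ is a nonzero integer of absolute value below $d$, so any prime $p>d$ dividing this factor cannot cancel and $\mathrm{ord}_p u_{n,d}^-<0$. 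For odd $m$ the same $p$ obstructs $u_{n,d}^+$, because the extra factor $2n+1$ cannot absorb it: $p\mid n+i$ and $p\mid 2n+1$ would force $p\mid 2i-1$, which is impossible for $p>d$, and since $p>d$ this prime is different from $3$.

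What remains, and is the crux, is to guarantee such a prime for every large $n$. The obstruction can fail for a given $n$ only if every upper-half factor $n+i$ is $d$-smooth, i.e.\ has no prime factor exceeding $d$. The lower bounds $d\geq 8$, $d\geq 12$ and $d\geq 3$ are what the bookkeeping requires so that the upper-half index range contains two consecutive integers and, in the odd-$m$ cases, so that the obstructing prime is forced above $3$; two consecutive values $n+i$ would then have to be simultaneously $d$-smooth. But consecutive $d$-smooth integers occur only finitely often -- equivalently, the largest prime factor of $N(N+1)$ tends to infinity with $N$ -- so for all sufficiently large $n$ some upper-half factor has a prime factor $>d$, yielding the contradiction. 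The three parts differ only in bookkeeping: the parity of $d$ governs whether the two blocks run over all integers or only odd ones, and the parity of $m$ governs whether a bare denominator prime suffices or one distinct from $3$ is required.

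The main obstacle is exactly this uniformity in $n$. Producing the obstructing prime as a prime in a short interval about $n$ does not work, for such an interval has fixed length while prime gaps grow, so it is empty for infinitely many $n$; the argument must therefore rest on the finiteness of consecutive smooth integers rather than on a Bertrand-type estimate. The second subtlety, present only for odd $m$, is that a denominator equal to a pure power of $3$ is allowed by Lemma~\ref{lem:m=2n+1}, so the obstructing prime must be held away from $3$; securing this simultaneously with the need for two consecutive admissible factors is what raises the threshold in Part~$(2)$.
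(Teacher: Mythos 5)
Your reduction to the arithmetic of the coefficients of $S_m(X)$ is the same as the paper's, and your closed form $u_{n,2e}^-=2^{2n}\frac{(2n+1)\cdots(2n+e-1)}{(n+1)\cdots(n+e-1)}$ is a correct (equivalent) version of the paper's \eqref{eq:u_n2}. For even $d$ your local analysis is sound, but your global step replaces the paper's elementary device with a much heavier one: the paper simply observes that a single odd denominator factor $n+\theta'$ reduces the whole numerator modulo $n+\theta'$ to a \emph{fixed} nonzero product of bounded odd numbers, giving an explicit threshold $n_k$ (Theorem~\ref{thm:deven}); you instead invoke the finiteness of pairs of consecutive $d$-smooth integers (St\o rmer/Mahler). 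That input is true but unnecessary here, and it does not survive the boundary case of part $(1)$: for the sphere $S^{7}$ the relevant parameter is $d'=6$, the constant term collapses to $u_n=2^{2n+1}(2n+1)/(n+2)$, there is only \emph{one} usable denominator factor, and $u_n$ genuinely \emph{is} an integer for infinitely many $n$ (e.g.\ $n+2=2^a$). No argument confined to the constant term can close this case; the paper switches to the coefficient $u_{n-1}=2^{2n-1}n(2n-1)(2n+1)/((n+1)(n+2))$. Your own requirement of two consecutive upper-half indices already fails there, so part $(1)$ as you state it ($d\geq 8$) is not proved. (Relatedly, watch the off-by-two: $u_{n,d}^{\pm}$ governs stiff configurations in $S^{d+1}$, not $S^{d-1}$.)

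The more serious gap is part $(3)$. For odd $d$ the identity behind your fixed-length quotient is unavailable: one only gets $u_n=2^{n-1}\frac{(d+2n)(d+2n+2)\cdots(d+4n-2)}{n(n+1)\cdots(2n-1)}$, where \emph{both} blocks have length $n$ growing with $n$. Consequently the residues of the numerator factors modulo a prime $p\mid n+i$ are odd numbers of size up to about $d+2n$, not ``below $d$,'' and a prime $p>d$ dividing a denominator factor can perfectly well cancel: if $p\in\bigl[(d+2n)/3,(d+4n-2)/3\bigr]$ then $3p$ is one of the (odd) numerator factors. So ``some $n+i$ has a prime factor exceeding $d$'' is not an obstruction, and the consecutive-smooth-numbers theorem does not supply what is actually needed, namely a prime $p>3(n-1)/2$ dividing the denominator, so that $p$, $2p$, $3p$ all miss the numerator range $[d+2n,\,d+4n-2]$ once $n\geq 2d+5$. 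That is exactly what the paper extracts from Hanson's theorem on prime factors of products of consecutive integers (Lemma~\ref{lem:hanson}, used in Theorem~\ref{thm:u_n_odd_d}); some Sylvester--Schur-type input of this strength is indispensable for the odd-$d$ case and is missing from your argument.
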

\subsection{The case in which $d=2k$ even and $m=2n$ even}
\label{sec:d_even_m_even}
In this case, for given $d\geq 8$ ($k\geq 4$), we prove that the constant term $u_n=u_{n,d}^-$ of $S_m(x)$ is not an integer for sufficiently large $m$.  For this purpose, we provide a few lemmas. 
\begin{lemma} \label{lem:key}
    For any $n\in \mathbb{N}$, it follows that
    \[
    1\cdot 3 \cdot 5 \cdots (2n-1)2^{n-1}= n (n+1)  (n+2)\cdots (2n-1).
    \]    
\end{lemma}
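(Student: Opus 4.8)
The plan is to rewrite both sides in terms of ordinary factorials and observe that they collapse to the same closed form. First I would record the standard double-factorial identity
\[
1\cdot 3 \cdot 5 \cdots (2n-1) = \frac{(2n)!}{2^n\, n!},
\]
which follows by factoring out a $2$ from each of the even numbers in $(2n)! = \bigl(1\cdot 3\cdots(2n-1)\bigr)\bigl(2\cdot 4\cdots(2n)\bigr)$ and using $2\cdot 4\cdots(2n) = 2^n n!$. Multiplying this identity by $2^{n-1}$ cancels the powers of two down to a single factor of $2$ in the denominator, giving
\[
1\cdot 3 \cdot 5 \cdots (2n-1)\, 2^{n-1} = \frac{(2n)!}{2\, n!}.
\]

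Next I would rewrite the right-hand side of the claimed identity. The product $n(n+1)\cdots(2n-1)$ runs over the $n$ consecutive integers from $n$ to $2n-1$, so it equals $(2n-1)!/(n-1)!$. A single cancellation then closes the argument: using $(2n)! = 2n\cdot(2n-1)!$ and $n! = n\cdot(n-1)!$ one finds
\[
\frac{(2n)!}{2\, n!} = \frac{2n\,(2n-1)!}{2\, n\,(n-1)!} = \frac{(2n-1)!}{(n-1)!} = n(n+1)\cdots(2n-1),
\]
which is exactly the asserted equality.

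There is no genuine obstacle here; the statement is an elementary factorial identity, and the only points meriting minor care are the double-factorial formula above and the base case $n=1$, where the right-hand product degenerates to the single factor $n=1$ while the left-hand side is $1\cdot 2^0 = 1$, so the two sides still agree. As an alternative one could proceed by induction on $n$: passing from $n$ to $n+1$ multiplies the left side by $2(2n+1)$ (one new odd factor $2n+1$ together with one extra factor of $2$), while on the right side appending the factors $2n$ and $2n+1$ and dropping the leading factor $n$ multiplies by $2n(2n+1)/n = 2(2n+1)$ as well, so both sides scale identically. I would present the factorial computation in the final write-up for its brevity.
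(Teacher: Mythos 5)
Your factorial computation is correct: the identity $1\cdot 3\cdots(2n-1)=(2n)!/(2^n n!)$ turns the left side into $(2n)!/(2\,n!)$, the right side is $(2n-1)!/(n-1)!$, and the single cancellation $(2n)!/(2\,n!)=(2n)(2n-1)!/(2n\,(n-1)!)$ closes the gap. The paper instead proves the lemma by induction on $n$: the base case $n=1$ is immediate, and the inductive step observes that passing from $n$ to $n+1$ multiplies the left side by $2(2n+1)$ while the right side gains the factors $2n$ and $2n+1$ and loses the leading factor $n$, which is the same scaling --- exactly the alternative you sketch in your final paragraph. The two arguments are of essentially equal length and difficulty; your factorial version has the small advantage of producing the closed forms $(2n)!/(2\,n!)$ and $(2n-1)!/(n-1)!$ explicitly, which makes the identity transparent at a glance, while the paper's induction avoids invoking the double-factorial formula as an auxiliary fact. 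Either write-up is acceptable; there is no gap.
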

\begin{proof}
We prove it by induction on $n$. 
    It is clear for $n=1$. 
    By the inductive assumption, 
    \begin{align*}
    1\cdot 3 \cdot 5 \cdots (2n+1)2^{n}&=1\cdot 3 \cdot 5 \cdots (2n-1)2^{n-1}\{2(2n+1)\}\\
    &=n (n+1)  (n+2)\cdots (2n-1) \{2 (2n+1)\}     \\
    &=(n+1)  (n+2)\cdots (2n-1) 2n (2n+1). \qquad \qedhere
    \end{align*}
\end{proof}

\begin{lemma}
    For $d=2k$ with $k\geq 2$, it follows that
    \begin{equation} \label{eq:u_n2}
    u_n=2^{2n+\lfloor (k-1)/2 \rfloor} \frac{(2n+1)(2n+3)\cdots (2n+ 2\lfloor k/2 \rfloor -1)}{(n+\lfloor (k-1)/2   \rfloor+1)(n+\lfloor (k-1)/2   \rfloor+2)\cdots (n+\lfloor (k-1)/2   \rfloor+\lfloor k/2 \rfloor)}
    \end{equation}
    for each $n \in \mathbb{N}$. 
\end{lemma}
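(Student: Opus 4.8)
The plan is to reduce the given product formula for $u_n=u_{n,d}^-$ to the form \eqref{eq:u_n2} by a short chain of elementary manipulations, using Lemma~\ref{lem:key} to eliminate the double factorial in the denominator and a parity split of the numerator to produce the two floor functions.

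First I would substitute $h=d+2n=2k+2n$ into the definition of $u_{n,d}^-$ and extract a factor of $2$ from each of the $n$ factors of the numerator, giving
\[
u_n=\frac{(2k+2n)(2k+2n+2)\cdots(2k+2n+2n-2)}{1\cdot 3\cdots(2n-1)}=\frac{2^n\,(k+n)(k+n+1)\cdots(k+2n-1)}{1\cdot 3\cdots(2n-1)}.
\]
Next I would apply Lemma~\ref{lem:key} in the rearranged form $1\cdot 3\cdots(2n-1)=2^{-(n-1)}\,n(n+1)\cdots(2n-1)$, turning the denominator into a product of consecutive integers and yielding
\[
u_n=2^{2n-1}\,\frac{(k+n)(k+n+1)\cdots(k+2n-1)}{n(n+1)\cdots(2n-1)}.
\]
Both products now run over $n$ consecutive integers; writing each as a ratio of factorials shows that this quotient equals $\frac{(2n)(2n+1)\cdots(2n+k-1)}{n(n+1)\cdots(n+k-1)}$, since both expressions reduce to $\tfrac{(2n+k-1)!\,(n-1)!}{(2n-1)!\,(n+k-1)!}$ (this factorial form avoids any case split on whether $n\ge k$). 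Hence $u_n=2^{2n-1}\prod_{i=0}^{k-1}(2n+i)\big/\prod_{i=0}^{k-1}(n+i)$, with each product now having exactly $k$ factors.

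The crux, where the floor functions appear, is the final step. In the numerator $\prod_{i=0}^{k-1}(2n+i)$ I would separate the even factors $2n+2j=2(n+j)$ from the odd ones. The even factors are those with $0\le j\le\lfloor(k-1)/2\rfloor$, so they contribute $2^{\lfloor(k-1)/2\rfloor+1}\,n(n+1)\cdots\bigl(n+\lfloor(k-1)/2\rfloor\bigr)$, while the surviving odd factors are exactly $(2n+1)(2n+3)\cdots(2n+2\lfloor k/2\rfloor-1)$, the stated numerator. The block $n(n+1)\cdots\bigl(n+\lfloor(k-1)/2\rfloor\bigr)$ then cancels the first $\lfloor(k-1)/2\rfloor+1$ factors of the denominator $n(n+1)\cdots(n+k-1)$. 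To identify what remains I would invoke the identity $\lfloor(k-1)/2\rfloor+\lfloor k/2\rfloor=k-1$, which rewrites $n+k-1$ as $n+\lfloor(k-1)/2\rfloor+\lfloor k/2\rfloor$ and shows that exactly $\lfloor k/2\rfloor$ factors survive, namely $(n+\lfloor(k-1)/2\rfloor+1)\cdots(n+\lfloor(k-1)/2\rfloor+\lfloor k/2\rfloor)$. Collecting powers of two through $2^{2n-1}\cdot 2^{\lfloor(k-1)/2\rfloor+1}=2^{2n+\lfloor(k-1)/2\rfloor}$ produces precisely \eqref{eq:u_n2}.

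I expect the parity bookkeeping in this last step to be the main obstacle: one must count the even factors of $\prod_{i=0}^{k-1}(2n+i)$ correctly for both parities of $k$, confirm that the odd factors reproduce the stated numerator, and verify that the leftover denominator matches the stated one. The identity $\lfloor(k-1)/2\rfloor+\lfloor k/2\rfloor=k-1$ is exactly what reconciles the two distinct floors in the statement, so checking it and its consequences is the delicate part; the reductions preceding it are routine.
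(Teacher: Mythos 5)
Your proof is correct, and it takes a genuinely different route from the paper's. The paper fixes $k$, splits into the cases $k=2\ell$ and $k=2\ell+1$ (writing out only the even case), and proves \eqref{eq:u_n2} by induction on $n$: the base case $u_1=2k+2$ is checked against the right-hand side via Lemma~\ref{lem:key}, and the inductive step uses the ratio $u_n/u_{n-1}=\frac{4(n+\ell-1)(2n+2\ell-1)}{(2n-1)(n+2\ell-1)}$. You instead derive the formula directly: Lemma~\ref{lem:key} clears the odd double factorial once and for all, the factorial identity $\frac{(k+2n-1)!\,(n-1)!}{(k+n-1)!\,(2n-1)!}$ shifts the ratio of two $n$-term products into a ratio of two $k$-term products, and the even/odd split of $\prod_{i=0}^{k-1}(2n+i)$ together with $\lfloor(k-1)/2\rfloor+\lfloor k/2\rfloor=k-1$ produces the two floors in one stroke. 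I checked the parity bookkeeping you flagged as delicate: the even indices $i=2j$ with $0\le j\le\lfloor(k-1)/2\rfloor$ and the odd indices up to $2\lfloor k/2\rfloor-1$ do exhaust $\{0,\dots,k-1\}$ for both parities of $k$, the cancellation leaves exactly $\lfloor k/2\rfloor$ denominator factors, and the powers of two combine to $2^{2n+\lfloor(k-1)/2\rfloor}$; spot checks at $(k,n)=(2,1),(3,2),(5,1)$ all agree. What your approach buys is uniformity (no case split on the parity of $k$, which the paper leaves to the reader) and a closed-form derivation that makes the origin of the floor functions transparent; what the paper's induction buys is that the verification at each step is a one-line ratio computation with no factorial manipulation.
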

\begin{proof}
    Fix $k$, and use induction on $n$.  
    Suppose $k = 2\ell$ with $\ell \geq 1$, and we prove that
    \[
    u_n = 2^{2n+\ell-1} \frac{(2n+1)(2n+3)\cdots (2n+2\ell -1)}{(n+\ell)(n+\ell+1)\cdots (n+2\ell -1)}. 
    \] 
    The case $k = 2\ell + 1$ can be handled in a similar manner.

    \medskip

    (i) By the definition of $u_n$, we have $u_1 = 2k + 2$.  
    From Lemma~\ref{lem:key}, the right-hand side of \eqref{eq:u_n2} with $n = 1$ is computed as follows:  
    \[
    2^{\ell +1} \frac{3 \cdot 5 \cdots (2\ell +1)}{(\ell +1)(\ell +2) \cdots 2\ell} 
    = 2 \cdot \frac{3 \cdot 5 \cdots (2\ell +1)}{1 \cdot 3 \cdot 5 \cdots (2\ell -1)} 
    = 2(2\ell +1) = 2k + 2.
    \]

    (ii) By the definition of $u_n$, it follows that
    \[
    u_n = \frac{4(n+\ell -1)(2n+2\ell -1)}{(2n-1)(n+2\ell -1)} \, u_{n-1}.
    \]
    By the inductive hypothesis for $n-1$, we have
    \begin{align*}
    u_n &= \frac{4(n+\ell -1)(2n+2\ell -1)}{(2n-1)(n+2\ell -1)} \cdot 2^{2n+\ell -3} \frac{(2n-1)(2n+1)\cdots (2n+2\ell -3)}{(n+\ell -1)(n+\ell)\cdots (n+2\ell -2)} \\
    &= 2^{2n+\ell -1} \frac{(2n+1)(2n+3)\cdots (2n+2\ell -1)}{(n+\ell)(n+\ell+1)\cdots (n+2\ell -1)}. \qquad \qedhere
    \end{align*}
\end{proof}

Now, we have the necessary tools to prove the non-existence theorem. 
\begin{theorem} \label{thm:deven}
Suppose $d=2k\geq 8$ $(k\geq 4)$.
Let 
\[
n_k=(2\theta-1)(2\theta-3)\cdots (2\theta- 2\lfloor k/2 \rfloor +1),
\]
where $\theta=\lfloor (k-1)/2   \rfloor+2$. 
Note that $n_k$ is positive. 
    If  $n>n_k$, then $u_n=u_{n,d}^-$ is not an integer. 
    In particular, for each fixed even integer $d\geq 10$, there does not exist a $2n$-stiff configuration in $S^{d-1}$ for $n>n_k$.   
\end{theorem}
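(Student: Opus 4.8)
The plan is to run everything off the closed form \eqref{eq:u_n2} for the constant term and reduce integrality of $u_n$ to a single divisibility condition modulo one factor of the denominator. Write $s=\lfloor (k-1)/2\rfloor$ and $t=\lfloor k/2\rfloor$, so that $s+t=k-1$ and $\theta=s+2$; then \eqref{eq:u_n2} reads
\[
u_n=2^{2n+s}\,\frac{N}{D},\qquad N=\prod_{i=1}^{t}(2n+2i-1),\quad D=\prod_{j=1}^{t}(n+s+j).
\]
Since $N$ is a product of odd integers, the power $2^{2n+s}$ is invisible to every odd prime, so $u_n\in\Z$ forces ${\rm ord}_p(D)\le {\rm ord}_p(N)$ for every odd prime $p$; equivalently, the odd part of $D$ divides $N$. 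As the odd part is multiplicative, this in turn forces the odd part of each single factor $n+s+j$ to divide $N$.

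Next I would localize this to a fixed integer. From $2n+2i-1\equiv 2i-2s-2j-1\pmod{n+s+j}$ one gets
\[
N\equiv R_j:=\prod_{i=1}^{t}(2i-2s-2j-1)\pmod{n+s+j},
\]
a nonzero integer (each factor is odd) that does not depend on $n$. Hence, for each odd prime $p$ with $a={\rm ord}_p(n+s+j)$, reducing modulo $p^a$ shows $p^a\mid N\Rightarrow p^a\mid R_j$; that is, divisibility of $N$ by the odd part of $n+s+j$ implies that the odd part of $n+s+j$ divides $R_j$. The arithmetic input I then need is that $|R_2|=n_k$ and $|R_1|<n_k$: the factors of $R_2$ are exactly $-(2s+5-2i)$ for $i=1,\dots,t$, whose absolute values are precisely the numbers $2s+5-2i$ defining $n_k$, while $|R_1|$ and $n_k=|R_2|$ are each products of $t$ consecutive odd positive integers with those of $R_1$ termwise smaller, giving $|R_1|<n_k$ for all $k\ge 4$ (both $k$ even and $k$ odd).

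The key idea is then to choose the factor by parity so that its odd part is the whole factor, turning a divisibility into a size bound. If $n+s$ is even, then $n+s+1$ is odd and I take $j_0=1$; if $n+s$ is odd, then $n+s+2$ is odd and I take $j_0=2$ (legitimate since $t\ge 2\iff k\ge 4$). In either case the chosen factor $n+s+j_0$ is odd, so its odd part equals $n+s+j_0$, and integrality of $u_n$ would require $n+s+j_0\mid R_{j_0}$. But $n+s+j_0>n>n_k\ge |R_{j_0}|\ge 1$, so this odd integer is too large to divide the nonzero $R_{j_0}$, a contradiction. Hence $u_n\notin\Z$ whenever $n>n_k$.

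\textbf{Main obstacle and consequence.}
The step requiring care is the middle one: checking cleanly, for both parities of $k$, that $|R_2|=n_k$ and $|R_1|<n_k$ and that the parity-chosen index $j_0$ lies in $\{1,\dots,t\}$. This is exactly where the shape of $n_k=(2\theta-1)(2\theta-3)\cdots(2\theta-2\lfloor k/2\rfloor+1)$ is forced, since one finds $n_k=\max(|R_1|,|R_2|)=|R_2|$. Once the coefficient $u_n=u_{n,d}^-$ of $S_m$ is shown not to be an integer, the asserted non-existence of $2n$-stiff configurations follows from the reduction recalled before the theorem: a non-integer coefficient of $S_m$ means that not every zero of $C_m^{d/2}$ can be of degree at most $2$, so by Lemma~\ref{lem:m=2n}, Lemma~\ref{lem:1}, and Theorem~\ref{thm:exists_m-stiff} no such configuration exists.
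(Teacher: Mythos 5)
Your proof is correct and follows essentially the same route as the paper: both isolate the odd factor among $n+\lfloor(k-1)/2\rfloor+1$ and $n+\lfloor(k-1)/2\rfloor+2$ in the denominator of \eqref{eq:u_n2}, reduce the odd numerator $(2n+1)(2n+3)\cdots(2n+2\lfloor k/2\rfloor-1)$ modulo it to a nonzero $n$-independent product of odd integers of absolute value at most $n_k<n$, and conclude that $u_n\notin\mathbb{Z}$. Your write-up is somewhat more explicit about the prime-by-prime localization and the identification $|R_2|=n_k$, $|R_1|<n_k$, but the underlying argument is the paper's.
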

\begin{proof}
    From $k\geq 4$, one has $\lfloor k/2   \rfloor \geq 2$, and the expression \eqref{eq:u_n2} for $u_n$ has factors $n+\lfloor (k-1)/2   \rfloor+1$ or $n+\lfloor (k-1)/2   \rfloor+2$ in the denominator. 
    Either $n+\lfloor (k-1)/2   \rfloor+1$ or $n+\lfloor (k-1)/2   \rfloor+2$ is an odd integer. 
    Let $n+\theta'$ be the odd one ($\theta' =\lfloor (k-1)/2   \rfloor+1$ or $\lfloor (k-1)/2   \rfloor+2$). We prove that $n+\theta'$ does not divide the odd factor $(2n+1)(2n+3)\cdots (2n+ 2\lfloor k/2 \rfloor -1)$ of the numerator in \eqref{eq:u_n2}. Indeed, 
    \begin{align*}
        (2n+1)(2n+3)\cdots &(2n+ 2\lfloor k/2 \rfloor -1)\\
        &\equiv 
        (-2\theta'+1)(-2\theta'+3)\cdots (-2\theta'+ 2\lfloor k/2 \rfloor -1) \pmod{n+\theta'}
    \end{align*}
    From our assumption $n>n_k$, the absolute value of the last expression is less than $n$ because $\theta' \leq \theta$. Since the expression is the product of odd numbers, it is not equal to $0$. Therefore, it is not congruent to $0$ mod $n+\theta'$, which is our desire. 
 \end{proof}
 For $d=6$ ($k=3$), we use the coefficient $u_{n-1}$ to prove the non-existence. 
 \begin{theorem} \label{thm:d=6,even}
     For $d=6$, 
     the coefficient $u_{n-1}$ of $S_{2n}(X)$ is not an integer for $n>30$. 
     In particular, there does not exist a $2n$-stiff configuration in $S^7$ for $n>30$. 
 \end{theorem}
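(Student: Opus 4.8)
The plan is to treat the case $d=6$ (i.e.\ $k=3$) just as the even cases with $k\geq 4$ were handled in Theorem~\ref{thm:deven}, namely by exhibiting a coefficient of $S_{2n}(X)$ that fails to be an integer for all large $n$. The obstruction in the previous theorem was that, for $k\geq 4$, the denominator of the closed form \eqref{eq:u_n2} for $u_n$ contains at least two consecutive factors $n+\lfloor(k-1)/2\rfloor+1$ and $n+\lfloor(k-1)/2\rfloor+2$, one of which is odd and cannot divide the odd numerator. When $k=3$ we have $\lfloor k/2\rfloor=1$, so the formula \eqref{eq:u_n2} for the constant term $u_n$ has only a single factor in the denominator, and that single factor can be arranged to divide the numerator; this is precisely why the constant term does not suffice and we must pass to a nearby coefficient. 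My first step is therefore to write down the explicit form of \eqref{eq:u_n2} at $k=3$, obtaining $u_n=2^{2n+1}(2n+1)/(n+2)$, and to confirm that this can indeed be an integer for infinitely many $n$, so that the constant term alone yields no contradiction.

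The second step is to compute $u_{n-1}$, the next-to-constant coefficient of $S_{2n}(X)$, in closed form for $d=6$. Recall from the displayed formula for $u_r$ in the case $m=2n$ that
\[
u_{r}=\binom{n}{r}\frac{h(h+2)\cdots(h+2r-2)}{1\cdot 3\cdot 5\cdots(2r-1)},\qquad h=d+2n,
\]
so setting $d=6$ and $r=n-1$ gives a ratio of $\binom{n}{n-1}=n$ times a product $h(h+2)\cdots(h+2n-4)$ over the double factorial $(2n-3)!!$, with $h=2n+6$. I would simplify this product (the numerator is a run of consecutive even-spaced integers starting at $2n+6$) into a form analogous to \eqref{eq:u_n2}, isolating a $2$-power, an odd numerator consisting of a product of odd numbers, and a small denominator; Lemma~\ref{lem:key} will again be the tool that converts a double factorial into a product of consecutive integers and thereby exposes the $2$-adic and odd parts cleanly.

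The third step is the arithmetic obstruction itself, run exactly as in Theorem~\ref{thm:deven}: after the simplification, $u_{n-1}$ has denominator containing a factor of the shape $n+c$ (for an explicit small constant $c$) together with enough $2$-powers to cover any even part, so that integrality of $u_{n-1}$ forces the odd factor $n+c$ to divide a short product of odd numbers of the form $(\pm 2c+\text{odd})$; reducing that product modulo $n+c$ and bounding its absolute value below $n+c$ for $n$ large shows it is a nonzero product of odd integers that cannot be congruent to $0$. The bound $n>30$ in the statement should emerge as the precise threshold beyond which this absolute-value estimate holds for $d=6$.

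I expect the main obstacle to be the second step, namely massaging $u_{n-1}$ at $d=6$ into a normal form in which the surviving denominator factor is a single odd linear term $n+c$ whose residues against the numerator can be bounded. Unlike the constant term, $u_{n-1}$ carries an extra binomial factor $\binom{n}{n-1}=n$ and a shorter double factorial, so the bookkeeping of which primes (and which powers of $2$) cancel is more delicate; the point of passing to $u_{n-1}$ is exactly that it restores the ``two consecutive factors, one of them odd'' phenomenon that was absent for the constant term at $k=3$, but verifying that this phenomenon really reappears, and pinning down the constant $c$ and the resulting threshold $n>30$, is where the real work lies. Once the normal form is in hand, the final congruence-and-size argument is routine and mirrors Theorem~\ref{thm:deven} verbatim.
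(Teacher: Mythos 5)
Your proposal matches the paper's proof: the paper likewise passes to the coefficient $u_{n-1}$, uses Lemma~\ref{lem:key} to obtain the closed form $u_{n-1}=2^{2n-1}\frac{n(2n-1)(2n+1)}{(n+1)(n+2)}$, and then runs the congruence-and-size argument of Theorem~\ref{thm:deven} on the odd factor among $n+1$, $n+2$. The only cosmetic slip is that the reduced numerator $(-\theta')(-2\theta'-1)(-2\theta'+1)$ need not be a product of odd integers when $\theta'=2$, but it is still visibly nonzero and of absolute value at most $30$, which is exactly what the threshold $n>30$ requires.
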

 \begin{proof}
     From Lemma \ref{lem:key}, $u_{n-1}$ can be expressed by 
     \[
     u_{n-1}=2^{2n-1}\frac{n(2n-1)(2n+1)}{(n+1)(n+2)}.
     \]
     The theorem can be proved in the same manner as the proof of Theorem \ref{thm:deven} with the expression of $u_{n-1}$. 
 \end{proof}
 \subsection{The case in which $d$ is odd and $m=2n$ even}
 \label{sec:d_odd_m_even}
 The key in this case is the existence of a prime as described in the following lemma.
 \begin{lemma}[Hanson \cite{H73}] \label{lem:hanson}
     For $n>\ell$, the product of $\ell$ consecutive integers $n(n+1)\cdots (n+\ell-1)$ contains a prime divisor greater than $3\ell/2$ with the exceptions $3\cdot 4$, $8\cdot 9$, and $6\cdot 7 \cdot 8 \cdot 9 \cdot 10$. 
 \end{lemma}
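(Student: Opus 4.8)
The plan is to treat this as a sharpened form of the Sylvester--Schur theorem and to argue by contradiction on the size of the product. First I would set $m=n+\ell-1$ and write $N=n(n+1)\cdots(n+\ell-1)=\ell!\,\binom{m}{\ell}$, noting that $n>\ell$ forces $m\ge 2\ell$. Since every prime dividing $\ell!$ is at most $\ell<3\ell/2$, a prime divisor of $N$ exceeding $3\ell/2$ exists if and only if one divides $\binom{m}{\ell}$; thus the lemma is equivalent to the assertion that $\binom{m}{\ell}$ has a prime factor greater than $3\ell/2$. I would assume for contradiction that every prime factor of $\binom{m}{\ell}$ is at most $3\ell/2$ and aim to contradict a lower bound on its magnitude.

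The local input is a bound on prime powers coming from Legendre's formula,
\[
\operatorname{ord}_p\binom{m}{\ell}=\sum_{j\ge 1}\left(\left\lfloor \frac{m}{p^j}\right\rfloor-\left\lfloor \frac{\ell}{p^j}\right\rfloor-\left\lfloor \frac{m-\ell}{p^j}\right\rfloor\right),
\]
a sum of $0/1$ terms that vanish once $p^j>m$; hence $\operatorname{ord}_p\binom{m}{\ell}\le\lfloor\log_p m\rfloor$ and the full prime power satisfies $p^{\operatorname{ord}_p\binom{m}{\ell}}\le m$. Splitting the assumed prime range at $\sqrt m$ (primes $p>\sqrt m$ occur to the first power only) yields an upper bound of the shape $\binom{m}{\ell}\le m^{\pi(\sqrt m)}\prod_{p\le 3\ell/2}p$. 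Against this I would set the elementary lower bound $\binom{m}{\ell}\ge\binom{2\ell}{\ell}\ge 4^{\ell}/(2\ell+1)$, valid because $\binom{m}{\ell}$ increases in $m$ for $m\ge 2\ell$. When $m$ is large relative to $\ell$ the crude estimate $\binom{m}{\ell}\le m^{\pi(3\ell/2)}$ already contradicts $\binom{m}{\ell}\sim m^{\ell}/\ell!$, since $\pi(3\ell/2)<\ell$; so the only delicate regime is $m$ comparable to $2\ell$.

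The main obstacle lives exactly in that regime. Taking logarithms, the lower bound contributes $\approx \ell\log 4\approx 1.386\,\ell$, while the assumed upper bound contributes $\theta(3\ell/2)+\pi(\sqrt m)\log m\approx 1.5\,\ell+o(\ell)$, where $\theta(x)=\sum_{p\le x}\log p$. Because the leading term $\theta(3\ell/2)\approx 1.5\,\ell$ already exceeds $1.386\,\ell$, the naive Sylvester--Schur estimate (which does succeed for the weaker constant $1$) fails, and the constant $3/2$ can be reached only with effective, sharp bounds on $\theta(x)$ and on $\prod_{p\le x}p$ of the type Hanson himself established, supplemented by extracting the missing prime directly: a Bertrand--Nagura argument produces a prime in $(3\ell/2,\,2\ell]$, which divides $\binom{2\ell}{\ell}$ and so disposes of the extremal case $m=2\ell$ for $\ell$ above a small threshold. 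Once these effective estimates force a contradiction for all $\ell$ beyond an explicit bound, the finitely many remaining pairs $(n,\ell)$ would be checked by hand, and it is this finite search that pins down precisely the exceptions $3\cdot 4$, $8\cdot 9$, and $6\cdot 7\cdot 8\cdot 9\cdot 10$; making the transition bound explicit enough that the exceptional list is complete, and no larger, is the genuinely laborious part.
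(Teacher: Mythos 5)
The paper does not prove this lemma at all: it is quoted verbatim from Hanson's 1973 paper \cite{H73} and used as a black box, so there is no internal proof to compare your attempt against. Judged on its own terms, your proposal correctly sets up the standard Sylvester--Schur framework (reduction to $\binom{m}{\ell}$ with $m=n+\ell-1\ge 2\ell$, the Legendre/Kummer bound $p^{\operatorname{ord}_p\binom{m}{\ell}}\le m$, the split at $\sqrt m$, and the entropy lower bound $\binom{2\ell}{\ell}\ge 4^{\ell}/(2\ell+1)$), and you correctly diagnose why the constant $3/2$ cannot fall out of the naive comparison: $\theta(3\ell/2)\approx 1.5\,\ell$ exceeds $\ell\log 4\approx 1.386\,\ell$.

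However, the proposal has a concrete gap beyond the admitted bookkeeping. Your two rescue devices cover only the extremes of the hard range: the crude bound $\binom{m}{\ell}\ge\binom{m}{\ell}$ versus $m^{\pi(3\ell/2)}$ handles $m$ large compared with $\ell$, and the Bertrand--Nagura prime in $(3\ell/2,2\ell]$ handles exactly $m=2\ell$ (such a prime does divide $\binom{2\ell}{\ell}$, but the argument that it divides $\binom{m}{\ell}$ uses $m=2\ell$ essentially). For $m$ strictly between $2\ell$ and roughly $3\ell$ --- where $\log\binom{m}{\ell}$ is still below $\theta(3\ell/2)+\pi(\sqrt m)\log m$ and no prime in $(3\ell/2,2\ell]$ need divide $\binom{m}{\ell}$ --- neither device applies, and this is precisely the regime where Hanson's paper does its real work (via sharper effective bounds on $\prod_{p\le x}p$ and a finer case analysis). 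In addition, the exceptional cases $3\cdot 4$, $8\cdot 9$, $6\cdot 7\cdot 8\cdot 9\cdot 10$ are never actually produced by your argument; they can only emerge from the explicit finite verification you defer, so as written the proposal establishes neither the general bound in the intermediate regime nor the completeness of the exception list. Since the paper only needs the statement as a citation, the cleanest course is to keep it as such rather than to reprove it.
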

 \begin{theorem} \label{thm:u_n_odd_d}
 Suppose $d$ is an odd positive integer.  
     If $n\geq 2d+5$, then $u_n=u_{n,d}^-$ is not an integer. 
     In particular, for each fixed odd integer $d\geq 3$, there does not exist a $2n$-stiff configuration in $S^{d-1}$ for $n \geq 2d+5$. 
 \end{theorem}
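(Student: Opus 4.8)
The plan is to produce, for each odd $d\ge 1$ and each $n\ge 2d+5$, a prime $p$ that divides the denominator of $u_n=u_{n,d}^-$ strictly more often than it divides the numerator, which at once forces $u_n\notin\mathbb{Z}$. Since $d$ is odd, $h=d+2n$ is odd, so I would first record that the numerator $h(h+2)\cdots(h+2n-2)$ is the product of the $n$ consecutive odd integers in $[d+2n,\,d+4n-2]$, while the denominator $1\cdot 3\cdots(2n-1)$ is the product of the $n$ consecutive odd integers in $[1,\,2n-1]$. It therefore suffices to exhibit an odd prime $p\le 2n-1$, which is then automatically one of the factors of the denominator, that divides no factor of the numerator.

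Such a prime is supplied by Hanson's Lemma~\ref{lem:hanson}, applied to the block $(n+1)(n+2)\cdots(2n-1)$ of $\ell=n-1$ consecutive integers starting at $N=n+1$. Here $N>\ell$, and because $n\ge 2d+5\ge 11$ when $d\ge 3$ (respectively $n\ge 7$ when $d=1$), none of the three exceptional products of Lemma~\ref{lem:hanson}, all of whose factors are at most $10$, can occur; Hanson then yields a prime $p>3(n-1)/2$ dividing this block. As a prime divisor of a product of integers not exceeding $2n-1$, $p$ is odd (since $p>2$) and $p\le 2n-1$, so $p$ is a factor of the denominator; moreover $3p>2n-1$ and $p^2>2n-1$, whence ${\rm ord}_p(1\cdot 3\cdots(2n-1))=1$.

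The decisive step is that $p$ divides no factor of the numerator. A factor is an odd integer in $[d+2n,\,d+4n-2]$, hence divisible by $p$ only if it is an odd multiple $p,3p,5p,\dots$ of $p$. On one side $p\le 2n-1<d+2n$ lies below the numerator range. On the other, the hypothesis $n\ge 2d+5$ is exactly the inequality $\tfrac{9}{2}(n-1)\ge d+4n-2$, so from $p>3(n-1)/2$ we get $3p>\tfrac{9}{2}(n-1)\ge d+4n-2$; thus $3p$, and every larger odd multiple of $p$, lies above the numerator range. Hence no odd multiple of $p$ meets $[d+2n,\,d+4n-2]$, giving ${\rm ord}_p(\text{numerator})=0$ and ${\rm ord}_p(u_n)=0-1=-1<0$, so $u_n\notin\mathbb{Z}$. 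I expect this inequality bookkeeping, which fixes the threshold at $n\ge 2d+5$ so that $3p$ just clears the top of the numerator range, together with the (routine) exclusion of Hanson's exceptions, to be the only delicate part of the argument.

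For the geometric statement I would invoke the reduction assembled just before Theorem~\ref{thm:deven}, applied with dimension parameter $d-2$ (an odd integer $\ge 1$ since $d\ge 3$). It says that if a $2n$-stiff configuration existed in $S^{d-1}$, then every zero of $C_{2n}^{(d-2)/2}(x)$ would have degree at most $2$, which by Lemma~\ref{lem:m=2n} would make the constant term $u_{n,d-2}^-$ of $S_{2n}(X)$ an integer. But the non-integrality just proved, applied to the odd integer $d-2$, gives $u_{n,d-2}^-\notin\mathbb{Z}$ as soon as $n\ge 2(d-2)+5=2d+1$, hence in particular for $n\ge 2d+5$. This contradiction rules out $2n$-stiff configurations in $S^{d-1}$.
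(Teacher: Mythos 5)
Your proposal is correct and follows essentially the same route as the paper: both invoke Hanson's lemma to extract a prime $p>3(n-1)/2$ from a block of $n-1$ consecutive integers ending near $2n$, and both close with the same inequality $3p>\tfrac{9}{2}(n-1)\ge d+4n-2$, which is equivalent to $n\ge 2d+5$, to show $p$ misses every (odd) factor of the numerator. The only cosmetic difference is that the paper first rewrites $1\cdot 3\cdots(2n-1)$ as $n(n+1)\cdots(2n-1)/2^{n-1}$ via Lemma~\ref{lem:key} before applying Hanson, whereas you work with the double factorial directly and note that the odd prime $p\le 2n-1$ is itself one of its factors.
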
 
 \begin{proof}
From Lemma \ref{lem:key}, one has 
\[
u_n=2^{n-1}\frac{(d+2n)(d+2n+2)\cdots (d+4n-2)}{n(n+1)\cdots (2n-1)}.
\]
From Lemma \ref{lem:hanson}, $n-1$ consecutive integers 
 $n(n+1)\cdots (2n-2)$ contains a prime divisor $p$ greater than $3(n-1)/2$ except for $n=3,6$. Thus, the denominator of $u_n$ contains a prime divisor $p> 3(n-1)/2$. 
 We prove that no factor in the numerator of $u_n$ is a multiple of $p$.  
 This is true since each factor $d+2n+2i$ is odd, $p\leq 2n-2 < d+2n$, and $3p> 9(n-1)/2\geq d+4n-2$ for $n \geq 2d+5$. 
 Therefore, $u_n$ is not an integer. 
\end{proof}
\subsection{The case in which $m=2n+1$ odd}
\label{sec:m_odd}
For this case, using $u_n = u_{n,d}^+ = u_{n,d+2}^- / (2n+1)$, we prove that when $u_n$ is reduced to its lowest terms, its denominator has a prime factor other than $3$. 
First, we prove the non-existence of $(2n+1)$-stiff configurations for even dimensions $d$ in a manner similar to Theorem \ref{thm:deven}. 

\begin{theorem}\label{thm:m=2n+1}
Suppose $d=2k\geq 14$ $(k\geq 7)$. 
    Let 
\[
n_k=(2\theta-1)(2\theta-3)\cdots (2\theta- 2\lfloor k/2 \rfloor +1),
\]
where $\theta=\lfloor k/2   \rfloor+4$. 
Note that $n_k$ is positive. 
    If $n > n_k$, then $u_n = u_{n,d}^+$ cannot be expressed in the form $s/3^{\ell}$ for some integers $s$ and $\ell$.  
    In particular,  for each fixed even integer $d\geq 16$, 
    there does not exist
a $(2n+1)$-stiff configuration in $S^{d-1}$ for $n>n_k$.  
\end{theorem}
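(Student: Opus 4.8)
The plan is to imitate the argument of Theorem~\ref{thm:deven}, but to refine the divisibility bookkeeping so that the prime appearing in the denominator of $u_n$ is guaranteed to be different from $3$; this extra requirement is exactly what separates the present case from Theorem~\ref{thm:deven}, where any odd prime sufficed.

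First I would rewrite $u_n=u_{n,d}^+$ in a product form analogous to \eqref{eq:u_n2}. Using $u_{n,d}^+=u_{n,d+2}^-/(2n+1)$ and substituting $k\to k+1$ in \eqref{eq:u_n2}, the factor $(2n+1)$ cancels the leading odd factor of the numerator, giving
\[
u_n=2^{2n+\lfloor k/2\rfloor}\,\frac{(2n+3)(2n+5)\cdots(2n+2\lfloor(k+1)/2\rfloor-1)}{(n+\lfloor k/2\rfloor+1)(n+\lfloor k/2\rfloor+2)\cdots(n+\lfloor k/2\rfloor+\lfloor(k+1)/2\rfloor)}.
\]
Write $N$ for the odd numerator and $D$ for the denominator, which is a product of $\lfloor(k+1)/2\rfloor$ consecutive integers. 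Since every power of $2$ lies in the numerator, it suffices to produce a prime $p\neq 2,3$ with $\mathrm{ord}_p(D)>\mathrm{ord}_p(N)$: this forces $\mathrm{ord}_p(u_n)<0$, and because $p\neq 3$ it is incompatible with $u_n=s/3^\ell$.

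Next I would locate a good denominator factor. As $k\geq 7$ gives $\lfloor(k+1)/2\rfloor\geq 4$, the four consecutive integers $n+\lfloor k/2\rfloor+1,\ldots,n+\lfloor k/2\rfloor+4$ all occur in $D$, and among any four consecutive integers at least one is coprime to $6$. Fix such a factor $q=n+\theta'$ with $\lfloor k/2\rfloor+1\leq\theta'\leq\lfloor k/2\rfloor+4=\theta$. Exactly as in Theorem~\ref{thm:deven}, reducing modulo $q$ and using $2n\equiv-2\theta'\pmod q$ gives
\[
N\equiv(2\theta'-3)(2\theta'-5)\cdots(2\theta'-2\lfloor(k+1)/2\rfloor+1)\pmod q,
\]
a product of odd numbers that is nonzero (since $2\theta'$ is even). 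A term-by-term comparison using $\theta'\leq\theta$ bounds the absolute value of this residue by $n_k$; here the choice $\theta=\lfloor k/2\rfloor+4$ and the exact length of the product (handled by splitting into the cases $k$ even and $k$ odd) are precisely what make the bound match the stated $n_k$. Hence for $n>n_k$ the residue is nonzero with absolute value $<n<q$, so $q\nmid N$.

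Finally, because $q$ is coprime to $6$, any prime $p$ witnessing $q\nmid N$ satisfies $p\neq 2,3$ and $\mathrm{ord}_p(N)<\mathrm{ord}_p(q)\leq\mathrm{ord}_p(D)$, giving $\mathrm{ord}_p(u_n)<0$ and hence the first assertion. The ``in particular'' statement then follows from the reduction via Lemma~\ref{lem:m=2n+1}: if a $(2n+1)$-stiff configuration existed, every zero of $C_m^{d/2}$ would be at most quadratic, forcing each coefficient of $S_m(X)$ — in particular the constant term $u_n$ — into the form $s/3^\ell$, contradicting what we have shown. Since $C_m^{d/2}$ is a Gegenbauer polynomial of dimension $d+2$, the obstruction rules out configurations in $S^{d+1}$, and renaming the ambient dimension yields the stated range $d\geq 16$. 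I expect the main obstacle to be the third paragraph: guaranteeing a factor coprime to $6$ while keeping $\theta'\leq\theta$ (which forces $k\geq 7$), and then checking that the residue bound lands exactly on $n_k$ rather than a weaker constant.
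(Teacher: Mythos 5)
Your proof is correct and follows essentially the same route as the paper: reduce to the constant term via $u_{n,d}^+=u_{n,d+2}^-/(2n+1)$, select among four consecutive denominator factors in \eqref{eq:u_n2} one that is coprime to $6$, and show it cannot divide the odd numerator by the same residue bound as in Theorem~\ref{thm:deven}. Your explicit cancellation of $(2n+1)$ against the leading numerator factor before reducing modulo $n+\theta'$ is slightly more careful bookkeeping than the paper's (which reduces the full numerator of $u_{n,d+2}^-$), and it is precisely what makes the residue bound land on the stated $n_k$ when $k$ is odd.
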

\begin{proof}
    Since $u_n=u_{n,d}^+=u_{n,d+2}^-/(2n+1)$ holds, we prove 
    the denominator of $u_{n,d+2}^-$ has a prime divisor that is not $3$. 
    We have $d+2 =2k' \geq 16$ and $k'=k+1$ from our assumption. From $\lfloor k'/2 \rfloor \geq 4$,  the expression \eqref{eq:u_n2} for $u_n$ has factors $n+\lfloor (k'-1)/2 \rfloor+1$, $n+\lfloor (k'-1)/2 \rfloor+2$, $n+\lfloor (k'-1)/2 \rfloor+3$, and $n+\lfloor (k'-1)/2 \rfloor+4$ in the denominator.  One of them is neither even nor a multiple of 3. 
    Let $n+\theta'$ be such a number whose prime divisors are all different from $2$ and $3$. 
    In the same manner as the proof of Theorem \ref{thm:deven}, 
    the numerator in \eqref{eq:u_n2} is not divisible by $n+\theta'$ under our assumption on $n$. Therefore, when $u_n$ is reduced to its lowest terms, its denominator has a prime factor other than $3$.
\end{proof}
 For $d=12$ (resp.\ $d=10$), we use the coefficient $u_{n-1}$ (resp.\ $u_{n-2}$) of $S_{2n+1}(X)$ to prove the non-existence. 
 \begin{theorem} \label{d=6}
     For $d=12$ and $n>4152$, the coefficient $u_{n-1}$ of $S_{2n+1}(X)$ cannot be expressed in the form $s/3^\ell$ for any integers $s$ and $\ell$. In particular, there does not exist a $(2n+1)$-stiff configuration in $S^{13}$ for $n>4152$. 
 \end{theorem}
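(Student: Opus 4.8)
The plan is to reduce the claim to a single ${\rm ord}_p$ computation by first putting $u_{n-1}$ in closed form. Taking $r=n-1$ and $h=d+2n+2=2n+14$ in the coefficient formula for $m=2n+1$, and clearing the double factorial $1\cdot 3\cdots(2n-1)$ via Lemma~\ref{lem:key}, I would cancel the overlapping runs of consecutive integers to obtain
\[
u_{n-1}=2^{2n+1}\,\frac{n(2n+1)(2n+3)(2n+5)}{(n+3)(n+4)(n+5)(n+6)}.
\]
Since $u_{n-1}$ has the form $s/3^{\ell}$ precisely when its reduced denominator is a power of $3$, it suffices to find a prime $p\neq 3$ with ${\rm ord}_p(u_{n-1})<0$; such a $p$ must come from the denominator $(n+3)(n+4)(n+5)(n+6)$.

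Next I would follow the strategy of Theorem~\ref{thm:deven}. Among four consecutive integers at least one is coprime to $6$; let $n+\theta'$ (with $\theta'\in\{3,4,5,6\}$ depending on $n\bmod 6$) be such a factor. Reducing the numerator modulo $n+\theta'$ gives $n(2n+1)(2n+3)(2n+5)\equiv(-\theta')(1-2\theta')(3-2\theta')(5-2\theta')=:c_{\theta'}\pmod{n+\theta'}$, hence $\gcd\big(n+\theta',\,n(2n+1)(2n+3)(2n+5)\big)=\gcd(n+\theta',c_{\theta'})$. The four possible constants are $c_3=45$, $c_4=420$, $c_5=1575$, $c_6=4158$, and the largest, $c_6=4158$, fixes the threshold: when $n>4152$ one has $n+\theta'>c_{\theta'}$ for every admissible $\theta'$.

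Granting $n+\theta'>c_{\theta'}$, the positive integer $n+\theta'$ is too large to divide $c_{\theta'}$, so there is a prime $p\mid n+\theta'$ with ${\rm ord}_p(n+\theta')>{\rm ord}_p(c_{\theta'})={\rm ord}_p\big(\gcd(n+\theta',\text{numerator})\big)$, forcing ${\rm ord}_p(\text{numerator})={\rm ord}_p(c_{\theta'})<{\rm ord}_p(n+\theta')$. Because $n+\theta'$ is coprime to $6$, this $p$ is at least $5$; as $p\geq 5$ it divides none of the other three consecutive factors and is absent from the power of $2$, so ${\rm ord}_p\big((n+3)(n+4)(n+5)(n+6)\big)={\rm ord}_p(n+\theta')$ and therefore ${\rm ord}_p(u_{n-1})<0$ with $p\neq 3$. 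Thus $u_{n-1}$ cannot be written as $s/3^{\ell}$, and the non-existence of a $(2n+1)$-stiff configuration in $S^{13}$ for $n>4152$ then follows through Theorem~\ref{thm:exists_m-stiff} and Lemma~\ref{lem:m=2n+1}, exactly as for the earlier cases.

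The part I expect to demand the most care is the algebraic reduction producing the displayed closed form for $u_{n-1}$, together with the verification that among $c_3,\dots,c_6$ it is $c_6=4158$ that is largest --- this is what pins the bound to $n>4152$ rather than a smaller value. A secondary subtlety is the logical step from ``$n+\theta'\nmid\text{numerator}$'' to ``a prime $p\neq 3$ survives in the reduced denominator''; this relies on the choice of $n+\theta'$ coprime to $6$, which guarantees that any uncancelled prime there is at least $5$ and cannot be furnished by the neighbouring denominator factors or by $2^{2n+1}$.
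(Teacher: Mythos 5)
Your proposal is correct and follows essentially the same route as the paper: the paper's proof consists of exactly the closed form $u_{n-1}=2^{2n+1}\,n(2n+1)(2n+3)(2n+5)/\bigl((n+3)(n+4)(n+5)(n+6)\bigr)$ followed by a pointer to the argument of Theorem~\ref{thm:m=2n+1}, which is precisely the residue-mod-$(n+\theta')$ computation you carry out. Your explicit evaluation of the constants $c_3,\dots,c_6$ and the identification of $c_6=4158$ as the source of the threshold $n>4152$ correctly fills in the details the paper leaves implicit.
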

 \begin{proof}
     From Lemma \ref{lem:key}, $u_{n-1}$ can be expressed by 
     \[
     u_{n-1}=2^{2n+1}\frac{n(2n+1)(2n+3)(2n+5)}{(n+3)(n+4)(n+5)(n+6)}.
     \]
     The theorem can be proved in the same manner as the proof of Theorem \ref{thm:m=2n+1}, using the expression for $u_{n-1}$. 
 \end{proof}
 \begin{theorem} \label{d=5}
    For $d=10$ and $n>10390$, the coefficient $u_{n-2}$ of $S_{2n+1}(X)$ cannot be expressed in the form $s/3^\ell$ for any integers $s$ and $\ell$. In particular, there does not exist a $(2n+1)$-stiff configuration in $S^{11}$ for $n>10390$. 
\end{theorem}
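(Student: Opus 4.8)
The plan is to run the argument of Theorem~\ref{d=6} essentially verbatim, replacing the coefficient $u_{n-1}$ by $u_{n-2}$, since for $d=10$ the four consecutive factors that end up in the denominator of $u_{n-2}$ are exactly what is needed for the ``one factor coprime to $6$'' argument. First I would make the coefficient explicit. Starting from
\[
u_{n-2}=\binom{n}{n-2}\,\frac{h(h+2)\cdots(h+2n-6)}{1\cdot 3\cdots(2n-3)},\qquad h=d+2n+2=2n+12,
\]
I factor a $2$ out of each of the $n-2$ terms of the numerator, rewrite $1\cdot 3\cdots(2n-3)$ via Lemma~\ref{lem:key}, and cancel the two overlapping product ranges. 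A routine but careful computation should give
\[
u_{n-2}=2^{2n-2}\,\frac{n(n-1)(2n-1)(2n+1)(2n+3)}{(n+2)(n+3)(n+4)(n+5)}.
\]

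With this in hand the structure is identical to Theorem~\ref{thm:m=2n+1}. Write $N=n(n-1)(2n-1)(2n+1)(2n+3)$ and $D=(n+2)(n+3)(n+4)(n+5)$. Among the four consecutive integers in $D$ at least one, say $n+\theta'$ with $\theta'\in\{2,3,4,5\}$, is coprime to $6$, so all its prime divisors differ from $2$ and $3$. Exactly as in the valuation argument of Theorem~\ref{thm:m=2n+1}: if $u_{n-2}$ were of the form $s/3^{\ell}$, then for every prime $p\mid(n+\theta')$ one would have $\mathrm{ord}_p(N)\ge \mathrm{ord}_p(D)\ge \mathrm{ord}_p(n+\theta')$, forcing $n+\theta'\mid N$. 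Hence it suffices to prove $n+\theta'\nmid N$.

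To establish $n+\theta'\nmid N$ I reduce modulo $n+\theta'$ using $n\equiv-\theta'$, obtaining $N\equiv c_{\theta'}\pmod{n+\theta'}$ with
\[
c_{\theta'}=(-\theta')(-\theta'-1)(-2\theta'-1)(-2\theta'+1)(-2\theta'+3),
\]
a nonzero constant depending only on $\theta'$; the largest in absolute value is $|c_5|=20790$, attained when $n\equiv 0\pmod 6$ leaves $\theta'=5$ as the only admissible choice. The observation that keeps the bound sharp is that $n+\theta'$ is odd, so $n+\theta'\mid c_{\theta'}$ forces $n+\theta'$ to divide the largest odd divisor of $|c_{\theta'}|$; since the odd part of $20790$ is $10395$, the hypothesis $n>10390$ guarantees that $n+\theta'$ exceeds the largest odd divisor of $|c_{\theta'}|$ in every residue class (the binding case being $\theta'=5$, where one needs precisely $n+5>10395$). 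Therefore $N\not\equiv 0\pmod{n+\theta'}$, so $u_{n-2}$ is not of the form $s/3^{\ell}$, and by Lemma~\ref{lem:m=2n+1} together with the reduction preceding it this rules out any $(2n+1)$-stiff configuration in $S^{11}$ for $n>10390$.

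The main obstacle is bookkeeping rather than conceptual: getting the closed form for $u_{n-2}$ exactly right (the cancellation of the two overlapping ranges and the correct power of $2$), and then verifying that $\theta'=5$ with $c_5=-20790$ is genuinely the worst case over all six classes of $n\bmod 6$, so that the single threshold $n>10390$ coming from the odd part $10395$ of $20790$ covers every class simultaneously.
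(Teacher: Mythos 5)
Your proposal is correct and follows the paper's proof essentially verbatim: the paper likewise derives $u_{n-2}=2^{2n-1}\frac{\frac{(n-1)n}{2}(2n-1)(2n+1)(2n+3)}{(n+2)(n+3)(n+4)(n+5)}$ (identical to your expression) and then invokes the argument of Theorem~\ref{thm:m=2n+1}, choosing $n+\theta'$ coprime to $6$ and reducing the numerator modulo it. Your explicit computation of the worst case $\theta'=5$, $|c_5|=20790$ with odd part $10395$, correctly accounts for the threshold $n>10390$, which the paper leaves implicit.
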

\begin{proof}
    From Lemma \ref{lem:key}, $u_{n-2}$ can be expressed by 
    \[
    u_{n-2}=2^{2n-1}\frac{\frac{(n-1)n}{2}(2n-1)(2n+1)(2n+3)}{(n+2)(n+3)(n+4)(n+5)}.
    \]
    The theorem can be proved in the same manner as the proof of Theorem \ref{thm:m=2n+1}, using the expression for $u_{n-2}$. 
\end{proof}

Next, we prove the non-existence of $(2n+1)$-stiff configurations for odd dimensions $d$ in a manner similar to Theorem \ref{thm:u_n_odd_d}. 
 \begin{theorem} \label{thm:d,m_odd}
 Suppose $d$ is an odd positive integer.  
     If $n\geq 2d+9$, then $u_n=u_{n,d}^+$ cannot be expressed in the form $s/3^\ell$ for any integers $s$ and $\ell$. 
     In particular, 
for each fixed odd integer $d\geq 3$, there does not exist
a $(2n+1)$-stiff configuration in $S^{d-1}$ for $n\geq 2d+9$. 
 \end{theorem}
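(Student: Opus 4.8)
The plan is to reduce the odd-degree case to the even-degree analysis already completed in Theorem~\ref{thm:u_n_odd_d}, using the identity $u_{n,d}^{+}=u_{n,d+2}^{-}/(2n+1)$ recorded above. Since $d$ is odd, the shifted dimension $d+2$ is again odd, so Theorem~\ref{thm:u_n_odd_d}---and, more usefully, the prime it produces in its proof---applies to $u_{n,d+2}^{-}$. The goal is not merely that $u_n$ fails to be an integer, but the stronger statement that its reduced denominator carries a prime factor different from $3$; accordingly I would keep track of a specific prime $p>3$ rather than invoking non-integrality as a black box.

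First I would put $u_{n,d+2}^{-}$ into the Hanson-friendly shape given by Lemma~\ref{lem:key}, namely
\[
u_{n,d+2}^{-}=2^{n-1}\,\frac{(d+2n+2)(d+2n+4)\cdots(d+4n)}{n(n+1)\cdots(2n-1)},
\]
so that $u_n=u_{n,d}^{+}=u_{n,d+2}^{-}/(2n+1)$. Applying Hanson's lemma (Lemma~\ref{lem:hanson}) to the $n-1$ consecutive integers $n(n+1)\cdots(2n-2)$ in the denominator---the exceptional blocks being excluded because $n\geq 2d+9\geq 11$---yields a prime divisor $p>3(n-1)/2$ of the denominator, with in particular $p\neq 2,3$. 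The hypothesis $n\geq 2d+9=2(d+2)+5$ is exactly the bound needed for dimension $d+2$, and the argument in the proof of Theorem~\ref{thm:u_n_odd_d} then shows that $p$ divides no factor of the numerator $(d+2n+2)\cdots(d+4n)$: each such factor is odd, $p\leq 2n-2$ is smaller than the least factor, and $3p>9(n-1)/2\geq d+4n$ exceeds the largest factor, so no odd multiple of $p$ lies in the numerator's range. Hence ${\rm ord}_p(u_{n,d+2}^{-})=-1$.

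The one genuinely new step---and what I expect to be the main point to verify---is that the extra division by $2n+1$ cannot cancel this prime, and that the surviving denominator prime is not $3$. I would check that $p\nmid 2n+1$: since $p\leq 2n-2<2n+1$ while $2p>3(n-1)\geq 2n+1$ for $n\geq 4$, the integer $2n+1$ is not a multiple of $p$, so ${\rm ord}_p(2n+1)=0$. Consequently ${\rm ord}_p(u_n)={\rm ord}_p(u_{n,d+2}^{-})-{\rm ord}_p(2n+1)=-1$, and $p>3$ gives $p\neq 3$. Therefore the reduced denominator of $u_n$ has a prime factor other than $3$, so $u_n$ cannot be written as $s/3^{\ell}$. (A direct route applying Hanson's lemma to $(n+1)\cdots(2n)$ in the form $u_n=2^{n}\,(d+2n+2)\cdots(d+4n)\big/\big((n+1)\cdots(2n+1)\big)$ also works and in fact tolerates a smaller bound on $n$; I lead with the reduction only to stay parallel to Theorem~\ref{thm:u_n_odd_d}.)

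For the concluding assertion I would invoke the mechanism recorded after Lemma~\ref{lem:m=2n+1}: were a $(2n+1)$-stiff configuration to exist, every zero of $C_{2n+1}^{d/2}(x)$ would have degree at most $2$, forcing every root of $S_{2n+1}(X)$ to be rational and hence, by Lemma~\ref{lem:m=2n+1}, of the form $k/3$; the elementary symmetric functions of such roots then all have the form $k'/3^{\ell}$, so in particular the constant term $u_n$ would be of the form $s/3^{\ell}$. This contradicts the previous paragraph, giving the non-existence exactly as in the proof of Theorem~\ref{thm:u_n_odd_d}. I anticipate no serious difficulty beyond the bookkeeping around the factor $2n+1$; the delicate place is confirming simultaneously that the Hanson prime $p$ is absent from the numerator and from $2n+1$ and satisfies $p\neq 3$, which is precisely what the lower bound $n\geq 2d+9$ secures.
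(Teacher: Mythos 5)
Your proposal is correct and follows essentially the same route as the paper: reduce via $u_{n,d}^{+}=u_{n,d+2}^{-}/(2n+1)$ and reuse the Hanson-prime argument of Theorem~\ref{thm:u_n_odd_d} in dimension $d+2$, noting that the resulting prime $p>3(n-1)/2>3$ survives in the denominator. Your extra verification that $p\nmid 2n+1$ is harmless but unnecessary, since dividing by a positive integer can never remove a prime from a reduced denominator (indeed ${\rm ord}_p(u_n)={\rm ord}_p(u_{n,d+2}^{-})-{\rm ord}_p(2n+1)\le -1$ in any case).
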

 \begin{proof}
     Since $u_n=u_{n,d}^+=u_{n,d+2}^-/(2n+1)$ holds, we prove 
    the denominator of $u_{n,d+2}^-$ has a prime divisor that is not $3$. 
    In the proof of Theorem \ref{thm:u_n_odd_d}, 
    for $n\geq 2d+9=2(d+2)+5$, the denominator of $u_{n,d+2}^-$ has a prime divisor $p$ greater than $3(n-1)/2>3$. 
    The divisor $p\ne 3$ cannot divide the numerator, which is our desire.  
 \end{proof} 
\section{Degrees $m$ for which $m$-stiff configurations exist for small $d$}

\subsection{Computer searchs for small $d$}
\label{sec:small_d_computer}
In Section \ref{sec:3}, we established a crude upper bound \( n_d \in \mathbb{N} \) such that no $m$-stiff configuration exists for any $ \lfloor m/2 \rfloor > n_d $. 
Here, $n_d$ can be explicitly expressed as a function of $d$ alone.
For $\lfloor m/2 \rfloor \leq n_d$, the rationality of the zeros of $S_m(X)$ can be verified using Mathematica \cite{Mathematica}.
Given a fixed dimension $d$, this in principle enables the classification of degrees $m$ for which an $m$-stiff configuration exists, though in practice, computational limitations may arise for large $d$. 
\begin{theorem} \label{thm:m_for_2d}
Suppose $d$ is even with $8 \leq d \leq 120$.   
     There exist $2n$-stiff configurations in $S^{d-1}$ if and only if $n=1$.  
\end{theorem}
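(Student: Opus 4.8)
The plan is to run the statement through the existence criterion of Theorem~\ref{thm:exists_m-stiff} and thereby turn it into an arithmetic question about the zeros of the dimension-$d$ Gegenbauer polynomial, then treat the two directions separately. Recall that a $2n$-stiff configuration in $S^{d-1}$ exists if and only if every value $\lambda_k=a_0(\varphi_k)$ is a positive rational; positivity is automatic from \eqref{eq:lam^-1}, so only rationality is at stake. By Lemma~\ref{lem:1}, rationality of all $\lambda_k$ forces each zero $x_i$ of $P_{2n}^{(\alpha,\alpha)}(x)$ to have degree at most $2$ over $\mathbb{Q}$, hence (the polynomial being even or odd) forces every $x_i^2$ to be rational; this is precisely the condition that every zero of the associated polynomial $S_{2n}(X)$ (taken with the parameter matching $S^{d-1}$) be rational, and by Lemma~\ref{lem:m=2n} such a zero must in fact be an integer. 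Thus a \emph{necessary} condition for existence is that $S_{2n}(X)$ split into integer linear factors, and in particular that all its coefficients $u_r$ be integers. The proof then reduces to (i) confirming existence at $n=1$, and (ii) producing, for every $n\ge 2$ and every even $d$ in range, one coefficient $u_r$ that is not an integer (equivalently, an irrational zero of $S_{2n}(X)$).

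For the \emph{if} direction ($n=1$) I would argue directly. Here $P_2^{(\alpha,\alpha)}$ has the two symmetric zeros $\pm 1/\sqrt{d}$, and applying the $n$-point quadrature with $\rho\equiv 1$ together with the symmetry $\lambda_1=\lambda_2$ gives $\lambda_1=\lambda_2=\tfrac12$. These are positive rationals, so Theorem~\ref{thm:exists_m-stiff} yields a $2$-stiff configuration in every $S^{d-1}$, in particular for all even $d$ with $8\le d\le 120$.

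For the \emph{only if} direction ($n\ge 2$) I would split on the size of $n$. For $n$ beyond an explicit threshold, non-existence is already supplied by Section~\ref{sec:3}: Theorem~\ref{thm:d=6,even} covers $d=8$ (that is, $S^7$) for $n>30$, and Theorem~\ref{thm:deven} covers each even $d$ with $10\le d\le 120$ for $n>n_k$, in each case by exhibiting a non-integral coefficient ($u_{n-1}$, resp.\ $u_n$). For the remaining finite range $2\le n\le n_k$, one verifies case by case, with Mathematica, that $S_{2n}(X)$ does not split over $\mathbb{Q}$; Lemma~\ref{lem:m=2n} makes this practicable, since the only candidate rational roots are the integer divisors of the constant term, so one first screens integrality of the $u_r$ and only then tests integer factorization.

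The main obstacle is the size of the threshold $n_k$ in Theorem~\ref{thm:deven}: it is a product of roughly $d/4$ odd integers of size up to about $d/2$, hence grows super-exponentially in $d$ and is astronomically large near $d=120$, so a literal enumeration of $2\le n\le n_k$ is out of reach. The real work is therefore to shrink this bound. I expect the right move—mirroring the linear bound obtained via Hanson's Lemma in the odd-dimensional case (Theorem~\ref{thm:u_n_odd_d}) and the tailored choice of $u_{n-1}$ in Theorem~\ref{thm:d=6,even}—is to select, for each fixed even $d$, a single coefficient $u_{n-j_d}$ that Lemma~\ref{lem:key} rewrites as a power of $2$ times a ratio of a bounded number of factors over a bounded number of consecutive integers, and to prove its non-integrality for all $n$ past a bound that is only polynomial (ideally linear) in $d$. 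This should leave a genuinely small set of small $n$ for direct machine verification, at which point the computer search becomes feasible across the whole range $8\le d\le 120$.
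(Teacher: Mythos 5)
Your reduction is the right one and matches the paper's: Theorem~\ref{thm:exists_m-stiff} plus Lemma~\ref{lem:1} and Lemma~\ref{lem:m=2n} turn existence into the condition that $S_{2n}(X)$ split into integer linear factors, the $n=1$ case is handled by $\lambda_1=\lambda_2=\tfrac12$ (or the cross-polytope), and Theorems~\ref{thm:deven} and~\ref{thm:d=6,even} dispose of all $n$ beyond the threshold $n_k$. You also correctly diagnose the obstacle: $n_k$ is a product of about $k/2$ odd numbers and is astronomically large near $d=120$, so enumerating $2\le n\le n_k$ is infeasible. But at that point your proposal stops being a proof: you only \emph{conjecture} that one can find, for each even $d$, a coefficient $u_{n-j_d}$ whose non-integrality holds for all $n$ past a polynomial (or linear) bound. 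That is not established, and it is doubtful as stated: the mechanism that gives the linear bound in the odd-$d$ case (Hanson's lemma applied to $n$ consecutive integers, producing a prime $>3(n-1)/2$ that cannot divide the odd numerator factors) does not transfer, because for even $d$ every coefficient $u_{n-j}$ rewrites via Lemma~\ref{lem:key} as a power of $2$ times a ratio with only a \emph{bounded} number of denominator factors, so the guaranteed prime is only of size about $3k/4$ and can well divide a numerator factor of size about $2n$. This unproven step is exactly where the content of the theorem lies, so the proposal has a genuine gap.

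The paper closes this gap by a different and cheaper idea: it does not shrink $n_k$ at all. Instead, it observes (from the congruence in the proof of Theorem~\ref{thm:deven}) that if $u_n$ is an integer then the odd denominator factor $n+\theta_k$ must divide
\[
f(k)=(-2\theta_k+1)(-2\theta_k+3)\cdots(-2\theta_k+2\lfloor k/2\rfloor-1),
\]
a \emph{fixed} nonzero integer depending only on $k$. Hence the candidate values of $n$ in the whole range $2\le n\le n_k$ are exactly the divisors of $f(k)$ shifted by $-\theta_k$ --- a short, explicitly computable list (via \texttt{Divisors}) even though $f(k)$ and $n_k$ are huge. The survivors are then screened by integrality of the odd parts of $u_n$ and $u_{n-1}$ and finally by factoring $S_{2n}(X)$. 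If you replace your speculative ``find a better coefficient'' step with this divisor-enumeration argument, your outline becomes the paper's proof.
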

\begin{proof}
Fix $d'=2k$ ($4 \leq k \leq 59$), which corresponds to the dimension $d=d'+2$ ($10 \leq d \leq 120$) of the sphere $S^{d-1}$. 
Let $n_k$ be the value defined in Theorem \ref{thm:deven}. 
From Theorem \ref{thm:deven}, it is sufficient to prove the non-existence of $2n$-stiff configurations for any each $n$ with $1<n\leq n_k$. 
For $n=1$, the regular cross-polytope is a $2$-stiff configuration for any dimension. 

We first suppose $n$ is odd. 
Let $\theta_k$ be defined to be the value depending only on $k$ as follows: 
\[
\theta_k=\begin{cases}
    \lfloor \frac{k-1}{2} \rfloor+1 \text{ if $k\equiv0,3 \pmod 4 $},\\
    \lfloor \frac{k-1}{2} \rfloor+2 \text{ if $k\equiv1,2 \pmod 4 $}.
\end{cases}
\]
Then, $n+\theta_k$ is an odd integer. 
Note that $n+\theta_k$ is a factor of the denominator of $u_n$ in the expression \eqref{eq:u_n2}. 
As explained in the proof of Theorem \ref{thm:deven}, if a $2n$-stiff configuration exists, then it satisfies  
\[
f(k) = (-2\theta_k+1)(-2\theta_k+3) \cdots (-2\theta_k+2 \lfloor k/2 \rfloor -1) \equiv 0 \pmod{n+\theta_k}.
\]  
Therefore, the possible values of $n$ are obtained by subtracting $\theta_k$ from the divisors of $f(k)$.  
The divisors are computed using the built-in Mathematica function \texttt{Divisors}.  
Let $H_k$ be the set of such odd integers $n \geq 2$.  

The odd part of an integer $s$ is given by $s' = s / 2^{{\rm ord}_2(s)}$.  
For each $n \in H_k$, we check whether the odd part of $u_n'$ is an integer by computing  
\begin{equation} \label{eq:mod}
\prod_{i=1}^{\lfloor k/2 \rfloor}(2n+2i-1) \mod \left( \prod_{i=1}^{\lfloor k/2 \rfloor} (n+\lfloor \frac{k-1}{2}\rfloor +i)\right)',  
\end{equation}  
which is expressed in terms of the odd parts of both the numerator and the denominator of \eqref{eq:u_n2}. 
If \eqref{eq:mod} is not congruent to 0, then $n$ is discarded.  
Otherwise, we check the integrality of $u_{n-1}'$ in a similar way.  
These two conditions eliminate almost all elements of $H_k$.  
For the remaining values of $n \in H_k$, we factorize $S_{2n}(X)$ using the Mathematica function \texttt{Factor}.  If the zeros of $S_{2n}(X)$ are all integers, then the Christoffel numbers $\lambda_k$ are all positive rational numbers (see \eqref{eq:lam^-1}), and a $2n$-stiff configuration exists by Theorem \ref{thm:exists_m-stiff}. 
Indeed, $S_{2n}(X)$ cannot be factorized using only linear polynomials for $n>1$.

The cases where $n$ is even or $d=8$ can be handled similarly.  
\end{proof}
The calculation in the proof of Theorem \ref{thm:m_for_2d} reduces the computational cost by removing the power of 2 from the expression \eqref{eq:u_n2} for $u_n$. 
From similar computations, we obtain the following theorems.
\begin{theorem}
Suppose $d$ is even with $12 \leq d \leq 120$.
Then, there exist $(2n+1)$-stiff configurations in $S^{d-1}$ if and only if $n=0,1$ or $(d,n)=(26,2)$.  
\end{theorem}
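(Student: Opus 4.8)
The plan is to mirror the proof of Theorem~\ref{thm:m_for_2d}, now for odd $m=2n+1$, using Lemma~\ref{lem:m=2n+1} in place of Lemma~\ref{lem:m=2n}. Throughout I keep track of the shift between the sphere dimension and the parameter of the auxiliary polynomial: to treat $(2n+1)$-stiff configurations in $S^{d-1}$ I work with $S_{2n+1}(X)$ built from $C_{2n+1}^{(d-2)/2}(x)=c\,P_{2n+1}^{((d-3)/2,(d-3)/2)}(x)$, that is, I substitute the parameter $d'=d-2$ into the formulas for the coefficients $u_r=u_{r,d'}^+$. By \eqref{eq:lam} the quantities $a_0(\varphi_k)$ of Theorem~\ref{thm:exists_m-stiff} are the Christoffel numbers $\lambda_k$, which are automatically positive by \eqref{eq:lam^-1}; moreover the polynomial $f$ with $\lambda_\nu^{-1}=f(x_\nu)$ is even (it is a rational combination of the squares $P_i(x)^2$), so $f(x)=g(x^2)$ and hence all $\lambda_k$ are rational if and only if all $x_i^2$ are rational, i.e.\ if and only if every zero of $S_{2n+1}(X)$ is rational. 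Consequently, by Theorem~\ref{thm:exists_m-stiff}, a $(2n+1)$-stiff configuration in $S^{d-1}$ exists exactly when $S_{2n+1}(X)$ splits into rational linear factors, and Lemma~\ref{lem:m=2n+1} forces any such rational zeros into $\tfrac{1}{3}\mathbb{Z}$.

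First I would settle the three existence cases. For $n=0$ ($m=1$) any configuration whose centroid is the origin works, so existence is immediate. For $n=1$ ($m=3$) one computes $S_3(X)=X-u_1$ with $u_1=(d+2)/3\in\mathbb{Q}$, so the unique zero is rational and a $3$-stiff configuration exists in every even dimension in the range. For $(d,n)=(26,2)$, substituting $d'=24$ gives $u_1=20$ and $u_2=64$, whence $S_5(X)=X^2-20X+64=(X-4)(X-16)$ splits over $\mathbb{Q}$ and a $5$-stiff configuration exists in $S^{25}$.

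For the non-existence half I would first import the upper bounds already established: Theorem~\ref{d=5} handles $d=12$ (no configuration for $n>10390$), Theorem~\ref{d=6} handles $d=14$ (no configuration for $n>4152$), and Theorem~\ref{thm:m=2n+1} handles even $d$ with $16\le d\le120$ (no configuration for $n$ beyond the explicit $n_k$). For each even $d$ this leaves only finitely many $n\ge2$. Instead of scanning all of them I would reuse the filtering device from Theorem~\ref{thm:m_for_2d}: writing $u_n=u_{n,d}^-/(2n+1)$ and expanding $u_{n,d}^-$ via \eqref{eq:u_n2}, I pick a denominator factor $n+\theta'$ coprime to $6$. By Lemma~\ref{lem:m=2n+1} this factor must divide the corresponding odd product in the numerator (otherwise the reduced denominator of $u_n$ would acquire a prime factor other than $3$), and since that product is congruent modulo $n+\theta'$ to a fixed product of odd numbers depending only on $d$, the integer $n+\theta'$ must divide that fixed product. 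This confines $n$ to the finitely many values of the form $\delta-\theta'$, where $\delta$ ranges over the divisors of the fixed product, and a secondary integrality test on $u_{n-1}$ (and on $u_{n-2}$ for the boundary dimensions $d=12,14$) removes almost all of them. The remaining short list of candidates $n$ would be checked by factoring $S_{2n+1}(X)$ in Mathematica and testing whether it splits into rational linear factors; the assertion is that only $(d,n)=(26,2)$ survives beyond $n=0,1$.

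The main obstacle is exactly the one met in the even case: making the search simultaneously exhaustive and feasible. The crude bounds $n_k$ grow enormously as $d$ approaches $120$, so a direct enumeration over $n$ is out of the question; everything rests on the divisibility filter collapsing the candidate set to the divisors of a single fixed integer, reinforced by the secondary tests on $u_{n-1}$ and $u_{n-2}$, so that only a handful of $n$ require an actual factorization. A delicate but routine point is to confirm that the two boundary dimensions $d=12$ and $d=14$, where the clean single-coefficient filter underlying Theorem~\ref{thm:m=2n+1} is unavailable, are genuinely covered by the $u_{n-1}$ and $u_{n-2}$ arguments of Theorems~\ref{d=6} and~\ref{d=5}, and that after all filtering the exceptional pair $(26,2)$ is indeed the unique case producing a configuration.
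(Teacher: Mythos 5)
Your proposal follows essentially the same route as the paper: the paper's proof of this theorem is a one-line remark that the argument of Theorem~\ref{thm:m_for_2d} carries over once $\theta_k$ is chosen so that $n+\theta_k$ is coprime to $6$ (so that Lemma~\ref{lem:m=2n+1} applies), with the boundary dimensions handled by the $u_{n-1}$ and $u_{n-2}$ bounds of Theorems~\ref{d=6} and~\ref{d=5} and the survivors checked by factoring $S_{2n+1}(X)$. Your write-up fills in the same divisibility filter, dimension shifts, and the explicit verification of $(d,n)=(26,2)$ correctly, so it matches the paper's intended argument.
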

\begin{proof}
    The idea is similar to the proof of Theorem \ref{thm:m_for_2d}. Note that we should choose $\theta_k$ such that $n+\theta_k$ is neither even nor a multiple of $3$. 
\end{proof}
\begin{theorem}
   Suppose $d$ is odd with $3 \leq d \leq 1999$.   
     There exist $m$-stiff configurations in $S^{d-1}$ if and only if $m=1,2,3$ or $(d,m)=(23,4),(241,4),(241,5),(1079,5)$.  
\end{theorem}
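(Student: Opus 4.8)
My plan is to reduce the existence question to a single rationality condition on the polynomial $S_m(X)$, dispose of the small and large degrees by hand, and reduce the intermediate range to a finite computation, with the genuinely exceptional degrees $m=4,5$ treated by an explicit Diophantine analysis. The starting point is the equivalence that, for $d\ge 3$, an $m$-stiff configuration exists in $S^{d-1}$ if and only if every zero of $S_m(X)$ — built from the dimension-$d$ Gegenbauer polynomial, i.e.\ with the construction parameter of Section \ref{sec:3} taken to be $d-2$ — is rational. One direction is Theorem \ref{thm:exists_m-stiff}: existence is equivalent to all Christoffel numbers $\lambda_k$ being positive and rational. Positivity is automatic from \eqref{eq:lam^-1}, and since \eqref{eq:lam^-1} writes $\lambda_\nu^{-1}$ as an even rational polynomial evaluated at $x_\nu$, rationality of $x_\nu^2$ already forces $\lambda_\nu\in\mathbb{Q}$; the converse is Lemma \ref{lem:1} together with the even/odd symmetry of the Gegenbauer polynomial. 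Thus existence is equivalent to $S_m(X)$ splitting into rational roots, which by Lemmas \ref{lem:m=2n} and \ref{lem:m=2n+1} must then be integers (for even $m$) or lie in $\tfrac13\mathbb{Z}$ (for odd $m$).

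Next I would settle the two extreme regimes. For $m=1,2,3$ the polynomial $S_m(X)$ has degree $\lfloor m/2\rfloor\le 1$ in $X$, hence a rational (indeed positive) root for every $d$, so these configurations always exist; concretely $m=1$ is the centroid condition, $m=2$ the cross-polytope, and $m=3$ the degree-one case. Since $d$ is odd, the parameter $d-2$ is odd as well, so Theorems \ref{thm:u_n_odd_d} and \ref{thm:d,m_odd} apply and furnish an explicit cutoff $M(d)=O(d)$, linear in $d$, beyond which the constant term of $S_m(X)$ already violates the integrality (respectively $s/3^{\ell}$) requirement; hence no $m$-stiff configuration exists for $m>M(d)$. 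It is precisely this linear bound for odd $d$ — as opposed to the super-exponential thresholds $n_k$ forced in the even-$d$ case of Theorem \ref{thm:deven} — that makes the range $d\le 1999$ tractable.

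It then remains to decide the finitely many degrees $4\le m\le M(d)$ for each odd $d\le 1999$. The degrees $m=4$ and $m=5$ are special: here $S_m(X)$ is a quadratic in $X$, so rationality of its roots is governed by a single discriminant, and a short computation reduces the condition to requiring that
\[
\frac{8(d+1)(d+2)}{3}\quad(m=4),\qquad \frac{8(d+1)(d+4)}{45}\quad(m=5)
\]
be a perfect square. Each is a Pell-type conic, whose solutions in the range $d\le 1999$ are exactly $d=23,241$ and $d=241,1079$ respectively; I would enumerate these directly and then check that the resulting roots are genuine integers / elements of $\tfrac13\mathbb{Z}$, confirming that precisely the four listed configurations occur. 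For the remaining range $6\le m\le M(d)$ the degree of $S_m(X)$ in $X$ is at least $3$ and ``all roots rational'' is a very restrictive condition; I would run an exhaustive search, first applying the cheap necessary test that every coefficient $u_r$ be integral (even $m$) or of the form $s/3^{\ell}$ (odd $m$), which discards all but a handful of pairs, and then factoring $S_m(X)$ with Mathematica for the survivors to confirm that none splits completely.

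The hard part is guaranteeing the completeness and feasibility of this last step rather than any single estimate: one must argue that the divisibility screening really reduces the number of expensive factorizations to a manageable count over all odd $d\le 1999$ and all $m\le M(d)$, and one must treat the smallest dimensions $d=3,5,7,9$ — where the parameter $d-2\in\{1,3,5,7\}$ sits at the boundary of the hypotheses of Theorems \ref{thm:u_n_odd_d} and \ref{thm:d,m_odd} and where the exceptional small cases of Hanson's lemma (Lemma \ref{lem:hanson}) intervene — by explicit case checks. Establishing that the four sporadic pairs are the only ones with $m\ge 4$, and not merely the only ones located by the search, is where the argument leans most heavily on the exhaustiveness of the computation.
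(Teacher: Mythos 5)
Your proposal matches the paper's own argument: the paper likewise combines the linear cutoffs of Theorems~\ref{thm:u_n_odd_d} and \ref{thm:d,m_odd} (available precisely because $d$, hence the construction parameter $d-2$, is odd) with a finite computer verification of the surviving pairs $(d,m)$ in the style of Theorem~\ref{thm:m_for_2d}, and the sporadic degrees $m=4,5$ are governed by the Pell conditions $6(d+1)(d+2)=y^2$ and $10(d+1)(d+4)=y^2$, which are exactly equivalent to your discriminant criteria and yield $d=23,241$ and $d=241,1079$ in the stated range. The only cosmetic difference is that the paper's one-line proof leaves $m=4,5$ to the same exhaustive search, whereas you treat them by the explicit Diophantine analysis that the paper carries out separately in Section~\ref{sec:small_m}.
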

\begin{proof}
    This case is easier since the lower bounds on $m$ given in Theorems~\ref{thm:u_n_odd_d} and \ref{thm:d,m_odd} are small. 
\end{proof}

\subsection{Remaining cases of $d\le 10$}
In this subsection, 
we discuss the existence of $m$-stiff configurations with small $d$ that is not covered in Subsection~\ref{sec:small_d_computer}.
Specifically, the remaining cases are $2n$-stiff configurations in $S^{d-1}$ for $d=4,6$ and $(2n+1)$-stiff configurations in $S^{d-1}$ for even $d$ with $4\le d \le 10$.
For these cases, we give proofs in order of similarity of the arguments as follows:
$(m,d)=(\text{even},4)$ in Theorem~\ref{thm:even_d=4},
$(m,d)=(\text{odd},4)$ in Theorem~\ref{thm:odd_d=4},
$(m,d)=(\text{odd},6)$ in Theorem~\ref{thm:odd_d=6},
$(m,d)=(\text{odd},8)$ in Theorem~\ref{thm:odd_d=8}, and
$(m,d)=(\text{odd},10)$ in Theorem~\ref{thm:odd_d=10}.
The case of $(m,d)=(\text{even},6)$ not yet mentioned requires the Newton polygon method, and will be described in Section~\ref{sec:Newton}.

In the following discussion, 
the existence of $m$-stiff configurations in any dimension for $m\le 3$ is used as a known fact. 
This will be discussed in more detail in Section~\ref{sec:small_m}.

\begin{theorem}
    \label{thm:even_d=4}
    There exists a $2n$-stiff configuration in $S^{3}$ if and only if $n=1$.
\end{theorem}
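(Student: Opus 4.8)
The plan is to treat the two implications separately. The ``if'' direction is immediate: for $n=1$ the regular cross-polytope $\{\pm e_1,\pm e_2,\pm e_3,\pm e_4\}\subset S^3$ is a spherical $3$-design and a $2$-stiff configuration, exactly as recorded just before the statement. Thus the substance of the theorem is the non-existence of a $2n$-stiff configuration for every $n\ge 2$, that is, for every even $m=2n\ge 4$.

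For non-existence I would argue through Theorem~\ref{thm:exists_m-stiff} and the reduction noted after it: a $2n$-stiff configuration in $S^3$ can exist only if all Christoffel numbers of $P_m^{(\alpha,\alpha)}$ with $\alpha=(4-3)/2=\tfrac12$ are rational, in which case Lemma~\ref{lem:1} together with the even/odd parity of $P_m^{(\alpha,\alpha)}$ forces the square of every zero to be rational. So it suffices to exhibit one zero of $P_m^{(1/2,1/2)}$ whose square is irrational. The decisive simplification in this dimension is that $\alpha=\tfrac12$, so $P_m^{(1/2,1/2)}$ is, up to a scalar, the Chebyshev polynomial of the second kind, whose zeros are explicitly $x_k=\cos\frac{k\pi}{m+1}$ for $k=1,\dots,m$. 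The $S_m(X)$ machinery of Section~\ref{sec:3} is of no help here, since one checks that the constant term of the relevant $S_m(X)$ equals $4^n$ and is always an integer; this is precisely why the case is deferred to the present subsection.

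I would then focus on $x_1=\cos\frac{\pi}{m+1}$, for which $x_1^2=\tfrac12\bigl(1+\cos\frac{2\pi}{m+1}\bigr)$, so that $x_1^2\in\Q$ if and only if $2\cos\frac{2\pi}{m+1}\in\Q$. Writing $\zeta=e^{2\pi i/(m+1)}$, we have $2\cos\frac{2\pi}{m+1}=\zeta+\zeta^{-1}$, a generator of the maximal real subfield of $\Q(\zeta)$; it is rational precisely when $\phi(m+1)\le 2$, that is, when $m+1\in\{1,2,3,4,6\}$ (equivalently, by Niven's theorem, when $\cos\frac{2\pi}{m+1}\in\{0,\pm\tfrac12,\pm1\}$). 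For even $m\ge 4$ the integer $m+1$ is odd and at least $5$, so $\phi(m+1)\ge 4$ and $x_1^2$ is irrational. By the reduction above, no $2n$-stiff configuration exists in $S^3$ for $n\ge 2$, which completes the argument. The only step requiring genuine care is this cyclotomic/Niven computation—pinning $m+1$ down to $\{1,2,3,4,6\}$ and observing that none of these is an odd integer $\ge 5$—while everything else is a direct appeal to Theorem~\ref{thm:exists_m-stiff} and Lemma~\ref{lem:1}.
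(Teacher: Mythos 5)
Your proof is correct, but it takes a genuinely different route from the paper's. The paper stays inside its $S_m(X)$ machinery: with $d'=2$ the constant term of $S_{2n}(X)$ is $u_n=2^{2n}$, so if all zeros were integers (as Lemma~\ref{lem:m=2n} forces for rational zeros) they would be $n$ \emph{distinct} powers of $2$ with product $2^{2n}$, giving $n(n-1)/2\le 2n$, hence $n\le 5$, and the cases $n=2,3,4,5$ are then excluded by direct inspection. You instead exploit the coincidence that $\alpha=\tfrac12$ makes $P_m^{(1/2,1/2)}$ a Chebyshev polynomial of the second kind, so the zeros are explicitly $\cos\frac{k\pi}{m+1}$, and Niven's theorem (equivalently, $[\Q(\zeta_{m+1}+\zeta_{m+1}^{-1}):\Q]=\varphi(m+1)/2$) shows $\cos^2\frac{\pi}{m+1}$ is irrational for every odd $m+1\ge 5$; combined with the paper's reduction (Theorem~\ref{thm:exists_m-stiff}, Lemma~\ref{lem:1}, and parity of $P_m^{(\alpha,\alpha)}$ forcing rational squares of zeros), this kills all $n\ge 2$ in one stroke. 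Your argument is shorter and eliminates the residual finite verification, at the price of being special to $d=4$ (and $d=2$), the only dimensions where the zeros have a closed trigonometric form; the paper's argument is less elegant here but is of a piece with the techniques it uses for all other dimensions. One small quibble: your aside that ``the $S_m(X)$ machinery is of no help here'' overstates the case --- the constant term $u_n=2^{2n}$ being an integer does defeat the \emph{general} theorems of Section~\ref{sec:3}, but the paper's actual proof of this theorem is a refinement of exactly that machinery.
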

\begin{proof}
We prove the non-existence of $2n$-stiff configurations in $S^{3}$ for $n\ge 2$.
For $d=4$, we consider $u_n=u_{n,d'}^-$ for $d'=2$.
Then we have $u_n=2^{2n}$ for $n\ge 1$.
If $S_{2n}(X)$ has distinct $n$ integer zeros,
then each zero $X_i$ of $S_{2n}(X)$ forms $2^{l_i}$ for some non-negative integer $l_i$
because $S_{2n}(X)$ is monic and the constant term $u_n$ of $S_{2n}(X)$ is a power of $2$.
Hence, we have $2^{2n}=u_n=X_1 X_2 \cdots X_n$.
On the other hand, since $X_1, X_2, \ldots ,X_n$ are mutually distinct,
we have $X_1 X_2 \cdots X_n\ge 2^0 \cdot 2^1 \cdots 2^{n-1}=2^{\frac{n(n-1)}{2}}$.
Thus, we have $2n\ge \frac{n(n-1)}{2}$, i.e., $n\le 5$.
For $n=2,3,4,5$, one can check that $S_{2n}(X)$ does not have distinct integer zeros.
\end{proof}

\begin{theorem}
    \label{thm:odd_d=4}
    There exists a $(2n+1)$-stiff configuration in $S^{3}$ if and only if $n=0,1,2$.
\end{theorem}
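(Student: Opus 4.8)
The plan is to classify $(2n+1)$-stiff configurations in $S^{3}$ by working with the polynomial $S_{2n+1}(X)$ and its coefficients $u_r = u_{r,d}^+$ specialized to the relevant dimension. Following the reduction in the excerpt, an $m$-stiff configuration with $m=2n+1$ exists only if every coefficient $u_r$ of $S_{2n+1}(X)$ is of the form $k/3^\ell$ (by Lemma~\ref{lem:m=2n+1}), and in fact only if the zeros of $S_{2n+1}(X)$ are all rational, hence expressible as $k/3$ by that lemma. Since $d=4$ gives $\alpha = 1/2$, I would first compute the constant term $u_{n,d}^+$ explicitly in closed form using Lemma~\ref{lem:key}, exactly as done for even dimensions, so that $u_n$ is a rational number whose numerator and denominator are controlled products. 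For $d=4$ we have $u_{n,4}^+ = u_{n,6}^-/(2n+1)$, and $u_{n,6}^-$ is the expression already appearing in Theorem~\ref{thm:d=6,even}-type computations; this makes the constant term and a few neighboring coefficients completely explicit.

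The main strategy would then mirror the proof of Theorem~\ref{thm:even_d=4}: extract a strong multiplicative constraint from the constant term. If all zeros $X_1,\ldots,X_n$ of $S_{2n+1}(X)$ are rational, Lemma~\ref{lem:m=2n+1} forces each $3X_i \in \mathbb{Z}$, so $3^n X_1\cdots X_n = 3^n u_n$ is an integer built from $n$ distinct rationals with denominators dividing $3$. Combining the explicit formula for $u_n$ with the distinctness of the zeros yields an inequality bounding $n$ from above, analogous to the bound $n \le 5$ obtained in the even case. Concretely, I expect the $p$-adic valuations of $u_n$ at the prime controlling the denominator (here the factors $(n+\cdots)$ in the denominator of $u_{n,6}^-$) to grow relative to what a product of $n$ distinct quasi-integers can supply, capping $n$ at a small explicit value. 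One then finishes by a finite check: for each surviving small $n$, factor $S_{2n+1}(X)$ using \texttt{Factor} and verify whether its zeros are all of the form $k/3$, confirming existence precisely for $n=0,1,2$ and ruling out all larger $n$ in the tested range.

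For the sufficiency direction (the "if" part), I would exhibit the known small configurations: $n=0$ corresponds to a spherical $1$-design, $n=1$ to a spherical $3$-design realizing a cross-polytope-type configuration, and $n=2$ to a spherical $5$-design whose relevant inner products are the zeros of the degree-$2$ Gegenbauer polynomial, all of which are classical and can be cited from the existence facts for $m \le 3$ referenced in the excerpt together with the explicit $(d,m)=(\text{dimension},5)$ check. Since Theorem~\ref{thm:exists_m-stiff} reduces existence to the positivity and rationality of the Christoffel numbers $a_0(\varphi_k)=\lambda_k$, verifying these small cases amounts to checking that the zeros of $P_{2n+1}^{(1/2,1/2)}(x)$ yield rational $\lambda_k$, which is immediate from the explicit factorizations.

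The hard part will be establishing a clean upper bound on $n$ from the constant-term constraint: unlike the pure power of $2$ in Theorem~\ref{thm:even_d=4}, here $u_{n,4}^+$ carries both a power of $2$ and a denominator involving $2n+1$ and the product $(n+\cdots)$, so the multiplicative argument is less transparent and the distinct-zeros inequality must be combined carefully with the $3$-adic and $2$-adic valuation bounds from Lemma~\ref{lem:m=2n+1}. I expect that after isolating the odd-and-prime-to-$3$ part of $u_n$ (as in the computation \eqref{eq:mod} used for Theorem~\ref{thm:m_for_2d}), the surviving window of $n$ is small enough that the final factorization check is routine, but pinning down that window rigorously rather than merely computationally is where the real work lies.
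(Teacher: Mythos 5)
Your overall strategy coincides with the paper's: force the constant term of $S_{2n+1}(X)$ into the shape $s/3^{\ell}$ via Lemma~\ref{lem:m=2n+1}, then play the product of the distinct zeros against a combinatorial lower bound to cap $n$, and finish with a finite check, exactly as in Theorem~\ref{thm:even_d=4}. However, there are two concrete problems. First, you pick the wrong coefficient: the polynomial $S_m(X)$ built from the parameter $d$ governs $m$-stiff configurations in $S^{d+1}$, i.e.\ in dimension $d+2$ (this is why Theorem~\ref{thm:m=2n+1} assumes $d\ge 14$ but concludes non-existence in $S^{d-1}$ only for $d\ge 16$). For configurations in $S^{3}$ the relevant quantity is therefore $u_{n,2}^{+}=u_{n,4}^{-}/(2n+1)$, not $u_{n,4}^{+}=u_{n,6}^{-}/(2n+1)$; the latter is what governs $S^{5}$ and is used in Theorem~\ref{thm:odd_d=6}. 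The correct constant term is $u_n=2^{2n}/(n+1)$, which is of the form $s/3^{\ell}$ if and only if $n+1=2^a3^b$; your choice would give $2^{2n+1}/(n+2)$ and would classify the wrong sphere.

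Second, the step you defer (``capping $n$ at a small explicit value'') is the actual content of the proof, and your sketch omits one ingredient without which the product bound does not close. Writing $u_n=2^{2^{a+1}3^b-a-2}/3^b$, monicity and Lemma~\ref{lem:m=2n+1} give that each zero has the form $X_i=2^{k_i}3^{l_i}$ with $k_i\ge 0$ and $l_i\ge -1$, but before you can assert the lower bound $X_1\cdots X_n\ge \frac{2^0}{3}\cdot\frac{2^1}{3}\cdots\frac{2^{b-1}}{3}\cdot 2^0\cdot 2^1\cdots 2^{n-b-1}$ you must rule out $l_i>0$. The paper does this with a separate mod-$3$ argument: using $u_r=\frac{2^{2r}}{n+1}\binom{n+r+1}{2r+1}$, the relation $(n+1)S(X_i)=0$ would have every term except the constant $\pm 2^{2n}$ divisible by $3$ if $3\mid X_i$, a contradiction. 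Only then does comparing exponents yield $2^{a+1}3^b-a-2\ge \frac{b(b-1)}{2}+\frac{(2^a3^b-b-1)(2^a3^b-b-2)}{2}$, whose solutions give $n\in\{0,1,2,3,5,8\}$, with $n=3,5,8$ eliminated by direct factorization. Your sufficiency direction is fine in outline: $n=0,1$ follow from the general existence for $m\le 3$, and $n=2$ (that is, $m=5$, $d=4$) follows from Theorem~\ref{thm:exists_m-stiff} once one checks the Christoffel numbers $\frac{1}{12},\frac{1}{4},\frac{1}{3}$ are rational, as recorded in the $m=5$ analysis.
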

\begin{proof}
    The existence of a $5$-stiff configuration in $S^{3}$ is referred to in Section~\ref{sec:5-stiff}.
    We prove the non-existence of $(2n+1)$-stiff configurations in $S^{3}$ for $n\ge 3$.
    For $d=4$, we consider $u_r$ for $d'=2$ and we have
    \begin{equation}
        \label{eq:d6odd-1}
        u_r=\binom{n}{r}\frac{(2n+4)(2n+6)\cdots (2n+2r+2)}{1\cdot 3 \cdots (2r+1)}=\frac{2^{2r}}{n+1}\binom{n+r+1}{2r+1}
    \end{equation}
    for $n\ge 1$.
    In particular, we have $u_n=2^{2n}/(n+1)$.
    Thus $u_n$ is expressed in the form $s/3^l$ for some integers $s$ and $l$ if and only if $n=2^a \cdot 3^b -1$ for some integers $a$ and $b$.
    Then we have
    \begin{equation}
        \label{eq:d6odd-2}
        u_n=\frac{2^{2^{a+1} 3^b -a -2}}{3^b}.
    \end{equation}
    If $S_{2n}(X)$ has distinct $n$ zeros which are expressed in the forms $k/3$ for $k\in \Z$,
    then each zero $X_i$ of $S_{2n}(X)$ is expressed in the form $2^{k_i}\cdot 3^{l_i}$ for $k_i\ge 0$ and $l_i\ge -1$
    because $S_{2n}(X)$ is monic and the constant term $u_n$ of $S_{2n}(X)$ is expressed in \eqref{eq:d6odd-2}.
    Furthermore, $l_i$ must be $0$ or $-1$.
    Indeed, if $l_i>0$, then the equation $(n+1)S_{2n}(X_i)=0$ leads to
    \[
    (n+1)(2^{k_i}\cdot 3^{l_i})^n +\sum^{n-1}_{r=1} (-1)^r 2^{2r}\binom{n+r+1}{2r+1}(2^{k_i}\cdot 3^{l_i})^{n-r} = -(-1)^n 2^{2n}
    \]
    by \eqref{eq:d6odd-1} and the left hand side is divided by $3$.
    This is a contradiction.
    Hence, we have $u_n=X_1 X_2 \cdots X_n$
    and the $b$ zeros of $S_{2n}(X)$ are expressed in the form $2^k/3$ and the $n-b$ zeros of $S_{2n}(X)$ are expressed in the form $2^{k'}$.
    On the other hand, since $X_1, X_2, \ldots ,X_n$ are mutually distinct,
    we have 
    \begin{align*}
        X_1 X_2 \cdots X_n
        &\ge
        \frac{2^0}{3}\cdot \frac{2^1}{3} \cdots \frac{2^{b-1}}{3} \cdot 2^0 \cdot 2^1 \cdots 2^{n-b-1}\\
        &=
        \frac{2^{\frac{b(b-1)}{2}+\frac{(2^{a} 3^b-b-1)(2^{a} 3^b-b-2)}{2}}}{3^b}.
    \end{align*}
    Thus, we have $2^{a+1} 3^b -a -2\ge \frac{b(b-1)}{2}+\frac{(2^{a} 3^b-b-1)(2^{a} 3^b-b-2)}{2}$.
    This inequality has only solutions $(a,b)=(0,0),(1,0),(0,1),(2,0),(1,1),(0,2)$
    and these correspond to $n=0,1,2,3,5,8$ respectively.
    For $n=3,5,8$, one can check that the zeros of $S_{2n}(X)$ do not satisfy the desired forms.
\end{proof}

\begin{theorem}
    \label{thm:odd_d=6}
    There exists a $(2n+1)$-stiff configuration in $S^{5}$ if and only if $n=0,1$.
\end{theorem}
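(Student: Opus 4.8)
The plan is to specialize the analysis of Subsection~\ref{sec:m_odd} to $d=6$, where the criterion $u_n=u_{n,6}^{+}=u_{n,8}^{-}/(2n+1)$ is available but Theorem~\ref{thm:m=2n+1} does not apply (it requires $k\ge 7$, whereas here $d+2=8$ gives $k=4$). First I would obtain a closed form for the constant term: taking $k=4$ (hence $\ell=2$) in \eqref{eq:u_n2} gives $u_{n,8}^{-}=2^{2n+1}(2n+1)(2n+3)/((n+2)(n+3))$, and dividing by $2n+1$ yields
\[
u_n=u_{n,6}^{+}=2^{2n+1}\,\frac{2n+3}{(n+2)(n+3)}.
\]
By the discussion following Lemma~\ref{lem:m=2n+1}, if a $(2n+1)$-stiff configuration exists then every coefficient of $S_{2n+1}(X)$, in particular $u_n$, must be expressible as $s/3^{\ell}$ with $s,\ell\in\Z$; so it suffices to determine all $n$ for which $u_n$ has this form.

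The key arithmetic step is to show that $u_n=s/3^{\ell}$ forces both $n+2$ and $n+3$ to be $3$-smooth (to have no prime factor $\ge 5$). Indeed, suppose a prime $p\ge 5$ divides $n+2$. Then $2n+3=2(n+2)-1\equiv -1\pmod p$, so $p\nmid(2n+3)$ and $p\nmid 2^{2n+1}$, while $p\nmid(n+3)$ since $n+2,n+3$ are consecutive; hence ${\rm ord}_p(u_n)=-{\rm ord}_p(n+2)<0$, contradicting $u_n=s/3^{\ell}$. If instead $p\ge 5$ divides $n+3$, then $2n+3=2(n+3)-3\equiv -3\not\equiv 0\pmod p$, and the same computation gives ${\rm ord}_p(u_n)=-{\rm ord}_p(n+3)<0$. (The prime $2$ never appears in the denominator because of the factor $2^{2n+1}$.) Thus $u_n=s/3^{\ell}$ only if $n+2$ and $n+3$ form a pair of consecutive $3$-smooth integers.

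Next I would invoke the classification of consecutive $3$-smooth pairs: the only pairs $(N,N+1)$ with $N$ and $N+1$ both of the form $2^{a}3^{b}$ are $(1,2),(2,3),(3,4),(8,9)$. This is elementary: in any such pair the odd member is a power of $3$ and the even member a power of $2$ (a common factor $3$ is impossible for consecutive integers), reducing the problem to the exponential equations $3^{b}+1=2^{a}$ and $2^{a}+1=3^{b}$, each settled by a congruence argument modulo $8$. Consequently $n+2\in\{1,2,3,8\}$, i.e.\ $n\in\{-1,0,1,6\}$, and since $n\ge 0$ the only surviving candidates are $n=0,1,6$.

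It remains to dispose of $n=6$ and confirm $n=0,1$. The candidate $n=6$ passes the necessary condition on $u_n$, so I would settle it directly by factoring $S_{13}(X)$ (equivalently, by testing a further coefficient $u_r$ for the form $s/3^{\ell}$) and checking that its zeros are not all of the form $k/3$; this finite computation rules out a $13$-stiff configuration in $S^5$. The existence for $n=0,1$ (that is, $m=1,3$) is already known from the small-$m$ discussion in Section~\ref{sec:small_m}, giving the stated equivalence. The main points requiring care are the reduction to consecutive $3$-smooth numbers together with their Diophantine classification, and the fact that the coefficient criterion alone does not eliminate $n=6$ — which is precisely why the explicit factorization of $S_{13}(X)$ is needed to finish.
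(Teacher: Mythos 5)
There is a genuine gap here: an off-by-two error in the dimension parameter, which invalidates the entire reduction. In the paper's convention, the polynomial $S_m(X)$ built from $u_{n,d'}^{\pm}$ (with $h=d'+2n$ or $h=d'+2n+2$) governs $m$-stiff configurations in $S^{d'+1}$, because $C_m^{d'/2}$ is the Gegenbauer polynomial of dimension $d'+2$; so for $S^{5}$ you must take $d'=4$, not $d'=6$. (Cross-check: Theorem~\ref{thm:odd_d=4} uses $d'=2$ for $S^{3}$, and Theorem~\ref{thm:odd_d=8} uses $d'=6$ for $S^{7}$.) The correct constant term for $S^5$ is $u_n=u_{n,4}^{+}=2^{2n+1}/(n+2)$, whose denominator is the single integer $n+2$. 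The quantity you derive, $2^{2n+1}(2n+3)/((n+2)(n+3))=u_{n,6}^{+}$, is the constant term relevant to $S^{7}$: you have in effect reproduced the paper's proof of Theorem~\ref{thm:odd_d=8} and attached it to the wrong sphere.

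This is not a cosmetic slip. With the correct $u_n=2^{2n+1}/(n+2)$, the condition $u_n=s/3^{\ell}$ only forces $n+2$ to be $3$-smooth, i.e.\ $n=2^a3^b-2$, which leaves infinitely many candidates; your key step --- the classification of consecutive $3$-smooth pairs --- then has nothing to act on, and the finite-candidate reduction collapses. The paper disposes of these infinitely many $n$ by the method of Theorem~\ref{thm:odd_d=4} instead: every zero of $S_{2n+1}(X)$ would have to be of the form $2^{k_i}3^{l_i}$ with $l_i\in\{0,-1\}$, the product of the zeros equals $u_n$, and the distinctness of the $n$ zeros forces that product to grow like $2^{n(n-1)/2}$ up to a bounded power of $3$, yielding an inequality that bounds $n$ and leaves only a few cases to check directly. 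Your consecutive-$3$-smooth (equivalently, Hanson's lemma) idea is sound and is essentially what the paper uses for $S^{7}$ and $S^{9}$, where the denominator genuinely contains two consecutive integers, but it cannot be made to work for $S^{5}$.
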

\begin{proof}
    The theorem can be proved in the same manner as the proof of Theorem~\ref{thm:odd_d=4} with the expression 
    \[
         u_r=\frac{2^{2r}(n+r+2)}{(n+1)(n+2)}\binom{n+r+1}{2r+1}, 
     \]
     in particular, $u_n=2^{2n+1}/(n+2)$. 
\end{proof}
To prove Theorems~\ref{thm:odd_d=8} and \ref{thm:odd_d=10}, 
we provide the following lemma. 
\begin{lemma}
    \label{lem:rational_sum}
    Let $p$ be a prime number, $r\ge 1, n_1,n_2,m_1,m_2$ be integers
    with $n_1,m_1,m_2\neq 0$ and ${\rm gcd}(n_1,p)={\rm gcd}(m_2,p)=1$.
    Then the rational number obtained from the sum of the two rationals
    \[
     \frac{n_1}{p^r m_1}+\frac{n_2}{m_2}
    \]
    must have the prime factor $p$ in the denominator.
\end{lemma}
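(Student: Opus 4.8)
The plan is to argue entirely through the $p$-adic valuation ${\rm ord}_p$, extended to $\mathbb{Q}$ as in the paragraph preceding Lemma~\ref{lem:m=2n}. The guiding principle is that a rational number $\gamma$ has $p$ in its reduced denominator precisely when ${\rm ord}_p(\gamma)<0$; hence it suffices to show that the displayed sum has strictly negative $p$-adic valuation.

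First I would compute the valuation of each summand. For the first term, the hypothesis $\gcd(n_1,p)=1$ gives ${\rm ord}_p(n_1)=0$, so
\[
{\rm ord}_p\left(\frac{n_1}{p^r m_1}\right)=0-r-{\rm ord}_p(m_1)=-r-{\rm ord}_p(m_1).
\]
Since $r\ge 1$ and ${\rm ord}_p(m_1)\ge 0$ (as $m_1$ is a nonzero integer), this valuation is at most $-1$, hence strictly negative. For the second term, $\gcd(m_2,p)=1$ gives ${\rm ord}_p(m_2)=0$, whence
\[
{\rm ord}_p\left(\frac{n_2}{m_2}\right)={\rm ord}_p(n_2)\ge 0,
\]
the inequality holding because $n_2$ is an integer, with the convention ${\rm ord}_p(0)=\infty$ covering the case $n_2=0$.

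The key step is the non-archimedean (ultrametric) property of ${\rm ord}_p$: for rationals $a,b$ with ${\rm ord}_p(a)\neq {\rm ord}_p(b)$ one has ${\rm ord}_p(a+b)=\min\{{\rm ord}_p(a),{\rm ord}_p(b)\}$. As the first summand has valuation $\le -1$ and the second has valuation $\ge 0$, the two valuations are distinct, so the sum inherits the smaller one:
\[
{\rm ord}_p\left(\frac{n_1}{p^r m_1}+\frac{n_2}{m_2}\right)=-r-{\rm ord}_p(m_1)\le -1<0.
\]
A strictly negative valuation forces $p$ to survive in the denominator of the reduced fraction, which is exactly the assertion.

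There is no serious obstacle here; the statement follows directly from the additivity of ${\rm ord}_p$ under multiplication and its behavior under addition. The only points requiring a little care are to confirm that the two valuations genuinely differ—so that the \emph{equality} case of the ultrametric inequality applies rather than merely the bound ${\rm ord}_p(a+b)\ge\min$—and to absorb the edge case $n_2=0$, both of which are immediate from the hypotheses and the convention ${\rm ord}_p(0)=\infty$.
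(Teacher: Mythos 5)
Your proof is correct. It takes a mildly different route from the paper's: the paper simply puts the two fractions over the common denominator $p^r m_1 m_2$, obtaining the numerator $n_1 m_2 + p^r n_2 m_1$, and observes that this numerator is congruent to $n_1 m_2 \not\equiv 0 \pmod{p}$, so the factor $p^r$ in the denominator cannot cancel. You instead phrase everything in terms of the $p$-adic valuation ${\rm ord}_p$ (already introduced in Section 3 of the paper) and invoke the equality case of the ultrametric inequality for summands of distinct valuation. The two arguments are equivalent at heart --- the ultrametric equality you cite is proved by exactly the common-denominator computation the paper performs --- but your version is cleaner to state and generalizes without change (e.g.\ it makes transparent that the valuation of the sum is exactly $-r-{\rm ord}_p(m_1)$, not merely negative), while the paper's version is self-contained and avoids appealing to any property of ${\rm ord}_p$ beyond its definition. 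Your handling of the edge case $n_2=0$ via the convention ${\rm ord}_p(0)=\infty$ is consistent with the convention the paper sets up, so there is no gap.
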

\begin{proof}
    The sum of the two rationals is expressed as
    \[
        \frac{n_1}{p^r m_1}+\frac{n_2}{m_2}
        =
        \frac{n_1 m_2 +  p^r n_2 m_1}{p^r m_1 m_2}
    \]
    and,
    by the assumption ${\rm gcd}(n_1,p)={\rm gcd}(m_2,p)=1$, we have
    $n_1 m_2 +  p^r n_2 m_1 \equiv n_1 m_2 \not\equiv 0 \pmod{p}$.
    Thus, the numerator is not divisible by $p$.
\end{proof}

\begin{theorem}
    \label{thm:odd_d=8}
    There exists a $(2n+1)$-stiff configuration in $S^{7}$ if and only if $n=0,1$.
\end{theorem}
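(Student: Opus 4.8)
The plan is to mirror the structure of the preceding theorems (Theorem~\ref{thm:odd_d=4}, \ref{thm:odd_d=6}) for the dimension $d=8$, which corresponds to $d'=6$ in the coefficient $u_r = u_{r,d'}^+ = u_{r,d'+2}^-/(2r+1)$. First I would write down an explicit closed form for the coefficients $u_r$ of $S_{2n+1}(X)$ using Lemma~\ref{lem:key}, analogous to the expressions $u_n = 2^{2n}/(n+1)$ for $d=4$ and $u_n = 2^{2n+1}/(n+2)$ for $d=6$. For $d=8$ (so $d'=6$, $k=3$ in the even-dimension notation applied to $d'+2=8$), the constant term should take the form $u_n = u_{n,8}^-/(2n+1)$ with $u_{n,8}^-$ having two factors in the denominator, yielding something like $u_n = 2^{2n+c}\,(\text{linear in }n)/\bigl((n+a)(n+b)\bigr)$ after cancellation. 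The existence of a $(2n+1)$-stiff configuration requires, by Theorem~\ref{thm:exists_m-stiff} together with Lemmas~\ref{lem:1} and \ref{lem:m=2n+1}, that every zero of $S_{2n+1}(X)$ be rational of the form $k/3$, and in fact the earlier reduction shows these zeros must be products of powers of $2$ and $3$ (with the $3$-exponent in $\{-1,0\}$).

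The key obstacle, and the reason this case needs the auxiliary Lemma~\ref{lem:rational_sum}, is that the denominator of $u_n$ now carries two distinct small-prime behaviors rather than a single clean power of $2$ or $2/3$. Specifically, after reducing $u_n$ to lowest terms I expect its denominator to contain a prime $p \neq 2,3$ for all but finitely many $n$, unless $n+a$ or $n+b$ happens to be a product of only $2$'s and $3$'s. So the strategy is: determine precisely which $n$ make $u_n$ expressible as $s/3^\ell$ (equivalently, force the denominator of $u_n$ to be $2$-and-$3$-smooth), then for that finite or parametrized family bound $n$ by a product inequality as in Theorem~\ref{thm:odd_d=4}, and finally check the surviving small cases directly. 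The role of Lemma~\ref{lem:rational_sum} is presumably to show that when one evaluates $(n+a)(n+b)\,S_{2n+1}(X_i)=0$ at a candidate zero $X_i = 2^{k_i} 3^{l_i}$, a prime $p$ dividing $n+a$ but not $n+b$ (or vice versa) survives into the denominator of the sum, forcing a contradiction and thereby pinning down the admissible $l_i$ and the admissible $n$.

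Concretely, I would argue as follows. After writing $u_n$ explicitly, the smoothness condition on its denominator restricts $n$ to those for which both $n+a$ and $n+b$ are $\{2,3\}$-smooth (after accounting for the prime-$3$ tolerance permitted by Lemma~\ref{lem:m=2n+1}); this is already a severe constraint and leaves only finitely many sporadic $n$ beyond the small values, which I would enumerate. For each such $n$ I then invoke the factorization-and-product argument: if $S_{2n+1}(X)$ has $n$ distinct zeros of the required shape $2^{k_i}3^{l_i}$, their product equals the constant term $\pm u_n$, and distinctness forces a lower bound on this product of the form $2^{\binom{n-t}{2}}\cdots 3^{\binom{t}{2}}/3^t$, where $t$ counts the zeros carrying a factor $1/3$; comparing this lower bound against the explicit size of $u_n$ yields an inequality forcing $n$ to be small. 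Lemma~\ref{lem:rational_sum} is the tool that first eliminates the possibility $l_i \geq 1$ (and similarly constrains any prime $p\mid n+a$), exactly paralleling the step in Theorem~\ref{thm:odd_d=4} where $l_i>0$ was ruled out by a divisibility-mod-$3$ argument.

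The hard part will be the two-factor denominator: unlike $d=4,6$ where a single denominator factor made the power-product bookkeeping transparent, here the interaction of $(n+a)$ and $(n+b)$ means the smoothness analysis and the contradiction-producing divisibility step are more delicate, and the exceptional cases flagged by the Hanson-type behavior (cf.\ Lemma~\ref{lem:hanson}) or by accidental smoothness of $n+a,n+b$ must be tracked by hand. I expect the final step to reduce to checking a short explicit list of candidate $n$ by factoring $S_{2n+1}(X)$ in \texttt{Mathematica}, confirming that only $n=0,1$ (i.e.\ the $1$-stiff and $3$-stiff configurations, which exist in every dimension) survive.
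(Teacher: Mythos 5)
You have the right ingredients (the two-factor denominator, Hanson's Lemma~\ref{lem:hanson}, Lemma~\ref{lem:rational_sum}, a finite residual check), but your plan both misplaces the decisive application of Lemma~\ref{lem:rational_sum} and imports machinery from Theorem~\ref{thm:odd_d=4} that is not needed and would not transfer. The paper's proof never touches the zeros of $S_{2n+1}(X)$: it writes the constant term as
\[
u_n = 2^{2n+1}\frac{2n+3}{(n+2)(n+3)} = -\frac{2^{2n+1}}{n+2}+\frac{2^{2n+1}\cdot 3}{n+3},
\]
invokes Lemma~\ref{lem:hanson} with $\ell=2$ to produce a prime $p\ge 5$ dividing $(n+2)(n+3)$ for every $n\ne 1,6$, observes that $p$ divides exactly one of the two consecutive factors, and applies Lemma~\ref{lem:rational_sum} \emph{to this partial-fraction decomposition of $u_n$ itself} to conclude that $p$ survives in the denominator of $u_n$. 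This contradicts the requirement (via Lemma~\ref{lem:m=2n+1}) that every coefficient of $S_{2n+1}(X)$ be of the form $s/3^{\ell}$, and the proof is already complete for all $n\ne 1,6$; there is no enumeration of smooth $n$ and no product inequality over distinct zeros. Your reading of Lemma~\ref{lem:rational_sum} as a tool for evaluating $(n+a)(n+b)S_{2n+1}(X_i)=0$ at a candidate zero $X_i=2^{k_i}3^{l_i}$ is not how it is used.

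The concrete gap in your plan is the surviving case $n=6$, where $(n+2)(n+3)=8\cdot 9$ is $\{2,3\}$-smooth and $u_6=5120/3$ genuinely is of the form $s/3^{\ell}$, so the constant-term obstruction disappears. This is exactly where your Theorem~\ref{thm:odd_d=4}-style argument is weakest: the numerator of $u_6$ carries a factor of $5$, so the zeros cannot all have the shape $2^{k}3^{l}$ that underpins the distinct-powers product bound, and knowing only that the zeros are of the form $k/3$ with product $\pm 5120/3$ yields no contradiction. The paper disposes of $n=6$ in one line by computing a \emph{different} coefficient, $u_3=14080/7$, whose denominator contains a $7$. Unless you either adopt that coefficient switch or commit to a direct factorization of $S_{13}(X)$, the one case your smoothness filter lets through is left unresolved.
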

\begin{proof}
    We prove the non-existence of $(2n+1)$-stiff configurations in $S^{7}$ for $n\ge 2$.
    For $d=8$, we consider $u_n=u_{n,d'}^+$ for $d'=6$.
    Then we have 
    \begin{equation}
        \label{eq:und8}
        u_n=2^{2n+1}\frac{2n+3}{(n+2)(n+3)}=-\frac{2^{2n+1}}{n+2}+\frac{2^{2n+1}\cdot 3}{n+3}
    \end{equation}
    for $n\ge 1$.
    By Lemma~\ref{lem:hanson}, $(n+2)(n+3)$ contains a prime divisor greater than $3$ when $n\neq 1,6$.
    Let $p\ge 5$ be such a prime divisor.
    Then $p$ can only be contained in either $n+2$ or $n+3$.
    Moreover, by Lemma~\ref{lem:rational_sum}, the equation \eqref{eq:und8} implies that the denominator of $u_n$ contains a prime divisor $p$,
    which is our desire.
    For $n=6$, one can check that $u_3=\frac{14080}{7}$.
\end{proof}

\begin{theorem}
    \label{thm:odd_d=10}
    There exists a $(2n+1)$-stiff configuration in $S^{9}$ if and only if $n=0,1$.
\end{theorem}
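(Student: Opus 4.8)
The plan is to imitate the proof of Theorem~\ref{thm:odd_d=8}, taking the constant term of $S_{2n+1}(X)$ as the primary tool and removing a short list of exceptional $n$ by inspecting a different coefficient. Existence for $n=0,1$ (that is, $m=1,3$) is the known fact recalled at the start of the subsection, so it suffices to prove that no $(2n+1)$-stiff configuration exists in $S^9$ for $n\ge 2$. Since $S^9$ corresponds to the parameter $d'=8$, I would use the identity $u_{n,8}^+=u_{n,10}^-/(2n+1)$ together with \eqref{eq:u_n2} to write the constant term as
\[
u_n=u_{n,8}^+=2^{2n+2}\,\frac{2n+3}{(n+3)(n+4)}=-\frac{3\cdot 2^{2n+2}}{n+3}+\frac{5\cdot 2^{2n+2}}{n+4}.
\]
By Lemma~\ref{lem:m=2n+1} and the integrality discussion preceding it, a $(2n+1)$-stiff configuration forces every coefficient $u_r$ of $S_{2n+1}(X)$ to be of the form $s/3^{\ell}$ with $s,\ell\in\Z$; hence it is enough to produce, for each $n\ge 2$, one coefficient whose reduced denominator contains a prime other than $3$.

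First I would analyse $u_n$. From $2n+3=2(n+3)-3=2(n+4)-5$ one gets $\gcd(2n+3,n+3)\mid 3$ and $\gcd(2n+3,n+4)\mid 5$, so in reducing $u_n$ every factor of $2$ in $(n+3)(n+4)$ is killed, a factor of $3$ may be killed, and at most one factor of $5$ (necessarily from $n+4$) may be killed. Hence every prime $p\ge 7$ dividing $(n+3)(n+4)$, every prime $5$ dividing $n+3$, and the prime $5$ whenever $25\mid n+4$ survives in the denominator of $u_n$; this can be packaged exactly as in Theorem~\ref{thm:odd_d=8} via the partial-fraction form above and Lemma~\ref{lem:rational_sum}. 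Therefore $u_n=s/3^{\ell}$ forces $n+3$ to be $\{2,3\}$-smooth and $n+4=2^c3^d5^e$ with $e\le 1$; in particular $n+3,n+4$ are consecutive $5$-smooth integers. By the classical finiteness of consecutive $5$-smooth integers (St{\o}rmer's theorem) and its explicit list, the only pairs with $n\ge 2$ meeting these constraints are $(n+3,n+4)=(8,9)$ and $(9,10)$, i.e.\ $n=5$ and $n=6$.

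It then remains to dispose of $n=5$ and $n=6$, which I would do by passing to the coefficient $u_3$ of $S_{2n+1}(X)$: a direct computation gives $u_3=7040/7$ for $n=5$ and $u_3=18304/7$ for $n=6$, both carrying the prime $7$ in the denominator and hence not of the form $s/3^{\ell}$. Together with the generic case this rules out every $n\ge 2$, and with the cited existence for $n=0,1$ it yields the claimed equivalence.

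The step I expect to be the main obstacle is the treatment of the prime $5$. In Theorem~\ref{thm:odd_d=8} the analogous constant term involves the factors $n+2,n+3$ with $\gcd(2n+3,n+2)=1$, so every prime $>3$ furnished by Lemma~\ref{lem:hanson} survives automatically and only Hanson's two exceptions must be checked. Here the identity $2n+3=2(n+4)-5$ lets a single factor of $5$ in $n+4$ cancel against the numerator, so Lemma~\ref{lem:hanson} alone no longer suffices; controlling this cancellation is precisely what forces the reduction to $5$-smoothness and the appeal to the finiteness of consecutive $5$-smooth integers. All remaining steps are routine adaptations of the preceding theorems.
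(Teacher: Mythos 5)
Your argument is correct and shares the skeleton of the paper's proof: both write the constant term as $u_n=2^{2n+2}\frac{2n+3}{(n+3)(n+4)}=-\frac{3\cdot 2^{2n+2}}{n+3}+\frac{5\cdot 2^{2n+2}}{n+4}$, both observe (via Lemma~\ref{lem:rational_sum}, equivalently your gcd computation) that any prime $p\ge 7$ dividing $(n+3)(n+4)$, or a factor of $5$ that fails to cancel against $2n+3$, survives in the reduced denominator, and both dispose of the residual cases $n=5,6$ by computing the coefficient $u_3$. The middle step, however, is genuinely different. The paper stays self-contained: it uses Lemma~\ref{lem:hanson} to produce a prime $p>3$ dividing $(n+3)(n+4)$ for $n\neq 5$, reduces the problematic case to $n+3=2^{r_1}$, $n+4=3^{r_4}\cdot 5$ or $n+3=3^{r_2}$, $n+4=2^{r_3}\cdot 5$, and eliminates these by the elementary facts that $2^{r}+1\not\equiv 0\pmod{15}$ and $3^{r}+1\not\equiv 0\pmod{20}$, leaving only $n+4=10$, i.e.\ $n=6$. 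You instead reduce to the condition that $n+3$ and $n+4$ are consecutive $5$-smooth integers with $n+3$ free of the prime $5$ and $5^2\nmid n+4$, and invoke St\o rmer's theorem for the complete list of consecutive $5$-smooth pairs. This is correct and arguably more transparent, but it imports a classical external result where the paper needs only two short order computations; if you take this route you should actually display the St\o rmer list $(1,2),(2,3),(3,4),(4,5),(5,6),(8,9),(9,10),(15,16),(24,25),(80,81)$ and carry out the filtering rather than assert that only $(8,9)$ and $(9,10)$ survive. One small point in your favor: your value $u_3=7040/7$ for $n=5$ agrees with a direct computation from the defining formula (the paper prints $7040/5$, apparently a typo); either way the denominator contains a prime other than $3$, so the conclusion is unaffected.
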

\begin{proof}
    We prove the non-existence of $(2n+1)$-stiff configurations in $S^{9}$ for $n\ge 2$.
    For $d=10$, we consider $u_n=u_{n,d'}^+$ for $d'=8$ ($k=4$).
    Then we have 
    \[
        u_n=2^{2n+2}\frac{2n+3}{(n+3)(n+4)}=-\frac{2^{2n+2}\cdot 3}{n+3}+\frac{2^{2n+2}\cdot 5}{n+4}
    \]
    for $n\ge 1$.
    By Lemma~\ref{lem:hanson}, $(n+3)(n+4)$ contains a prime divisor greater than $3$ when $n\neq 5$.
    Let $p\ge 5$ be such a maximal prime divisor.
    If $p>5$ or $p=5$ does not vanish in the denominator of $u_n$, the same argument as the proof of Theorem~\ref{thm:odd_d=8} can be applied.
    We consider the case that $p=5$ vanishes in the denominator of $u_n$.
    Then we have $n+3=2^{r_1} \cdot 3^{r_2}$ and $n+4=2^{r_3} \cdot 3^{r_4} \cdot 5$ by Lemma~\ref{lem:rational_sum}.
    For $n\ge 2$, this only occurs when (i) $n+3=2^{r_1}$ and $n+4=3^{r_4} \cdot 5$ ($r_4\ge 1$) or (ii) $n+3=3^{r_2}$ and $n+4=2^{r_3}\cdot 5$ ($r_3\ge 1$).
    In the case (i), by $r_4\ge 1$, we have $n+4 \equiv 0 \pmod{15}$.
    On the other hand, we have $n+4=n+3+1=2^{r_1}+1 \equiv 2^l+1 \not\equiv 0 \pmod{15}$, where $r_1\equiv l \pmod{4}$ with $l=0,1,2,3$.
    This is a contradiction.
    In the case (ii), for $r_3\ge 2$, we have $n+4 \equiv 0 \pmod{20}$.
    On the other hand, we have $n+4=n+3+1=3^{r_2}+1 \equiv 3^l+1 \not\equiv 0 \pmod{20}$, where $r_2\equiv l \pmod{4}$ with $l=0,1,2,3$.
    This is a contradiction.
    For $r_3=1$, i.e., $n+4=10$.
    Then we have $n+3=9$ and $n=6$.
    In this case, we have $u_3=\frac{18304}{7}$. 

    For $n=5$, we have $u_3=\frac{7040}{5}$.
\end{proof}

\section{Dimensions $d$ where an $m$-stiff configuration exists for small $m$}
\label{sec:small_m}
In this section, we provide the complete list of
the dimensions $d\ge 2$ where an $m$-stiff configuration in $S^{d-1}$ exists for $m=2,3,4,5$.
We give the Gegenbauer polynomials $P^{(\alpha,\alpha)}_m(x)$ for $\alpha=\frac{d-3}{2}$
of small degree and their zeros for later use:
$P^{(\alpha,\alpha)}_0(x)=1$;
$P^{(\alpha,\alpha)}_1(x)=\frac{d-1}{2}x$ and its zero $x=0$;
$P^{(\alpha,\alpha)}_2(x)=\frac{d+1}{8}(d x^2-1)$ and its zeros $x=\pm \frac{1}{\sqrt{d}}$;
$P^{(\alpha,\alpha)}_3(x)=\frac{(d+3)(d+1)}{48}x((d+2)x^2-3)$ and its zeros $x=0,\pm \sqrt{\frac{3}{d+2}}$;
\[
    P^{(\alpha,\alpha)}_4(x)=\frac{(d+5)(d+3)}{384}((d+2)(d+4)x^4-6(d+2)x^2+3)
\]
and its zeros
\begin{equation}
    \label{eq:P4roots}
    x=\pm \sqrt{\frac{3(d+2)\pm \sqrt{6(d+1)(d+2)}}{(d+2)(d+4)}};
\end{equation}
\[
    P^{(\alpha,\alpha)}_5(x)=\frac{(d+7)(d+5)(d+3)}{3840}x((d+4)(d+6)x^4-10(d+4)x^2+15)
\]
and its zeros
\begin{equation}
    \label{eq:P5roots}
    x=0,\pm\sqrt{\frac{5(d+4)\pm \sqrt{10(d+1)(d+4)}}{(d+4)(d+6)}}.
\end{equation}

\subsection{Degree $m=2$}
It can be verified from a simple calculation that
the Christoffel numbers of $P^{(\alpha,\alpha)}_2(x)$ are
$\lambda_1=\lambda_2=\frac{1}{2}$.
By Theorem~\ref{thm:exists_m-stiff},
this implies there exists a $2$-stiff configuration in $S^{d-1}$ for each $d$.
In fact, the regular cross-polytopes and the cubes in $S^{d-1}$
for $d\ge 2$ and the demicube in $S^{d-1}$ for $d\ge 5$ are $2$-stiff configurations.
There are also several other examples of 2-stiff configurations
(cf.~Borodachov~\cite{B22, B24}).

\subsection{Degree $m=3$}
It can be verified from a simple calculation that
the Christoffel numbers of $P^{(\alpha,\alpha)}_3(x)$ are
$\lambda_1=\lambda_3=\frac{d+2}{6d}$ and $\lambda_2=\frac{2(d-1)}{3d}$.
By Theorem~\ref{thm:exists_m-stiff},
this implies there exists a $3$-stiff configuration in $S^{d-1}$ for each $d$.
There are some examples of $3$-stiff configurations
such as
the 24-cell in $S^3$,
Symmetrized Schl\"affi in $S^5$ and
the minimal shell of the $E_7$-root lattice in $S^7$,
etc.
(cf.~Borodachov~\cite{B22, B24}).
Note that explicit examples of 3-stiff configurations for all $d$ are not yet known.

\subsection{Degree $m=4$}
We calculate \eqref{eq:lam} using \eqref{eq:P4roots} and obtain that
the Christoffel numbers of $P^{(\alpha,\alpha)}_4(x)$ are
\begin{align*}
    \lambda_1=\lambda_4=&\frac{3d(d+1)-(d-2)\sqrt{6(d+1)(d+2)}}{12d(d+1)},\\
    \lambda_2=\lambda_3=&\frac{3d(d+1)+(d-2)\sqrt{6(d+1)(d+2)}}{12d(d+1)}.        
\end{align*}
By Theorem~\ref{thm:exists_m-stiff},
this implies there exists a $4$-stiff configuration in $S^{d-1}$ 
if and only if $d=2$ or $6(d+1)(d+2)$ is a square.
We determine all pairs $(d,y)$ of integers satisfying 
\begin{equation}
    \label{eq:m4square}
    6(d+1)(d+2)=y^2.   
\end{equation}
Since \eqref{eq:m4square} can be formed $(6d+9)^2-6y^2=9$,
we treat the generalized Pell equation
\begin{equation}
    \label{eq:m4Pell}
    x^2-6y^2=9,
\end{equation}
where $x=6d+9$.
Several methods for solving generalized Pell equations are known.
Here, we use the brute-force search.

Let $D$ be a nonsquare positive integer. For our purpose, we assume that $D \equiv 2 \pmod{4}$.
Let $K = \mathbb{Q}(\sqrt{D})$ be the quadratic number field generated by $\sqrt{D}$.
The norm in $K$ is given by $N(x + y\sqrt{D}) = x^2 - Dy^2$,
and the ring of integers of $K$ is $\mathcal{O}_K = \mathbb{Z}[\sqrt{D}]$.
Then, it is known that the group of units $\mathcal{O}_K^\ast:=\{\alpha \in \mathcal{O}_K \colon\, N(\alpha)=\pm 1\}$
has the unique element $u_1\in \mathcal{O}_K^\ast$ such that $\mathcal{O}_K^\ast =\{\pm 1\}\times \{u_1^k \colon\, k\in \Z\}$ and $u_1>1$,
which is called the fundamental unit for $\Q(\sqrt{D})$.
Let $M$ be a nonzero integer.
Then a pair $(x,y)$ of integers is a solution of $x^2-Dy^2=M$ if and only if $\alpha=x+y\sqrt{D}\in \mathcal{O}_K$ satisfies $N(\alpha)=M$.
We say that $\alpha ,\beta \in K$ are associates
(or are associated with one another)
in $\mathcal{O}_K^\ast$ if there exists $u\in \mathcal{O}_K^\ast$ such that $\alpha = u \beta$.

\begin{lemma}[cf.~{\cite[Section~8.8]{AG76}}]
\label{lem:brute-force}
Fix $u_0=a+b\sqrt{D}$ with positive integers $a,b$
and $N(u_0)=1$.
Every solution of $x^2-Dy^2=M$ is $(x'+y'\sqrt{D})u_0^k$
where $k\in \Z$, $N(x'+y'\sqrt{D})=M$ and
\[
|x'|\le \frac{u_1+|M|}{2}
\quad
\text{and}
\quad
|y'|\le \frac{u_1+|M|}{2\sqrt{D}},
\]
where $u_1$ is the fundamental unit for $\Q(\sqrt{D})$.
\end{lemma}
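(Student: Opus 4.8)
The plan is to reinterpret the equation inside the quadratic field $K=\Q(\sqrt{D})$ and to exploit the action of the unit group on the solution set. A pair $(x,y)$ solves $x^2-Dy^2=M$ exactly when $\alpha=x+y\sqrt{D}\in\mathcal{O}_K$ has norm $N(\alpha)=M$; and since $D\equiv 2\pmod 4$ we have $\mathcal{O}_K=\Z[\sqrt{D}]$, so every element of norm $M$ automatically has integer coordinates. Because $N(u_0)=1$, multiplication by $u_0^{k}$ carries norm-$M$ elements to norm-$M$ elements, so $\langle u_0\rangle$ acts on the solution set, and it suffices to produce in each orbit one representative $\alpha'=x'+y'\sqrt{D}$ obeying the stated bounds; then every solution is $\alpha'u_0^{k}$ for some $k\in\Z$, as claimed.

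First I would record the two real embeddings $\xi:=\sigma(\alpha)=x+y\sqrt{D}$ and $\eta:=\bar\sigma(\alpha)=x-y\sqrt{D}$, for which $\xi\eta=N(\alpha)=M$; in particular $\xi,\eta\neq 0$ since $M\neq 0$. Under $\sigma$ the fundamental unit satisfies $u_1>1$ and $\bar\sigma(u_1)=u_1^{-1}$ (as $N(u_1)=1$), so replacing $\alpha$ by $\alpha u_1^{k}$ scales $\xi$ by $u_1^{k}$ and $\eta$ by $u_1^{-k}$. The key normalization step is to choose the unique integer $k$ for which $1\le |\xi u_1^{k}|<u_1$: this is possible because the quantities $|\xi|u_1^{k}$ form a geometric progression of ratio $u_1>1$, so exactly one term lands in the half-open interval $[1,u_1)$. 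Writing $\alpha'$ for the resulting representative and $\xi',\eta'$ for its embeddings, we have $1\le|\xi'|<u_1$ together with $\xi'\eta'=M$, whence $|\eta'|=|M|/|\xi'|\le|M|$.

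With both conjugates controlled, the bounds follow at once: from $x'=(\xi'+\eta')/2$ and $y'\sqrt{D}=(\xi'-\eta')/2$ the triangle inequality gives $|x'|\le(|\xi'|+|\eta'|)/2<(u_1+|M|)/2$ and $|y'|\le(|\xi'|+|\eta'|)/(2\sqrt{D})<(u_1+|M|)/(2\sqrt{D})$, which are the asserted inequalities, while $\alpha=\alpha'u_1^{-k}$ exhibits the original solution in the required form. The one point requiring care is the interplay between the fixed unit $u_0$ used to generate the orbit and the fundamental unit $u_1$ appearing in the bound: the argument normalizes within a fundamental domain for $u_1$, so to obtain a complete set of orbit representatives for $\langle u_0\rangle$ one takes $u_0$ to be the fundamental unit itself, which is precisely the situation in our applications (for instance $u_0=u_1=5+2\sqrt{6}$ when $D=6$, where $N(u_1)=1$). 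The genuinely substantive step is the geometric-progression placement of a single conjugate into $[1,u_1)$; once that is in hand, the estimates on $|x'|$ and $|y'|$ are mechanical.
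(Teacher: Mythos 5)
The paper itself offers no proof of this lemma---it is cited to \cite[Section~8.8]{AG76}---so your proposal must stand on its own. The core of your argument (pass to $K=\Q(\sqrt{D})$, act by units, normalize one real embedding into $[1,u_1)$, then bound $x'=(\xi'+\eta')/2$ and $y'\sqrt{D}=(\xi'-\eta')/2$ by the triangle inequality) is the standard route and is correct \emph{provided} $N(u_1)=1$ and $u_0=u_1$. The genuine gap is in the other case. You write $\bar\sigma(u_1)=u_1^{-1}$ ``as $N(u_1)=1$'' and then normalize by powers of $u_1$; but when $N(u_1)=-1$, multiplying $\alpha$ by an odd power of $u_1$ sends a solution of $x^2-Dy^2=M$ to a solution of $x^2-Dy^2=-M$. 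So the representative you place in $[1,u_1)$ need not satisfy $N(\alpha')=M$ at all, and your closing claim that ``$u_0$ is the fundamental unit itself, which is precisely the situation in our applications'' is false: in the paper's $m=5$ application one has $D=10$, $u_1=3+\sqrt{10}$ with $N(u_1)=-1$, and the lemma is applied with $u_0=u_1^2=19+6\sqrt{10}\neq u_1$. Concretely, for $D=10$, $\alpha=7+2\sqrt{10}$ (norm $9$), your normalization gives $\alpha u_1^{-1}=-1+\sqrt{10}$, which has norm $-9$; your argument produces no admissible representative for this orbit.

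Worse, the bound with $u_1$ (rather than $u_0$) in the denominator cannot be proved in general when $N(u_1)=-1$, because it is false as stated: take $D=10$, $M=10$, $u_0=19+6\sqrt{10}$. Then $(x,y)=(10,3)$ solves $x^2-10y^2=10$, but every solution has $|y'|\ge 3 > (u_1+10)/(2\sqrt{10})\approx 2.08$, so no representative within the stated box exists. What \emph{is} true, and what your normalization argument proves verbatim after replacing $u_1$ by $u_0$ (normalize $|\xi'|\in[1,u_0)$, note $|\eta'|=|M|/|\xi'|\le|M|$), is the bound $|x'|\le(u_0+|M|)/2$ and $|y'|\le(u_0+|M|)/(2\sqrt{D})$. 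This weaker, correct bound still suffices for both applications in the paper: for $D=10$, $M=9$ it gives $|y'|\le 3$, and the extra candidate $y'=\pm3$ is eliminated since $9+90=99$ is not a square, so the list of solution classes (and hence the classification of $5$-stiff dimensions) is unaffected. You should either restrict the lemma to $N(u_1)=1$ or restate the bounds with $u_0$ in place of $u_1$; as written, your proof covers only the $D=6$ application.
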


Returning to \eqref{eq:m4Pell},
it is known that the fundamental unit for $\Q(\sqrt{6})$
is $u_1=5+2\sqrt{6}$ and we apply Lemma~\ref{lem:brute-force}
as $u_0=u_1$ and $M=9$.
Then the candidates are
\[
|x'|\le \frac{5+2\sqrt{6}+9}{2}<10
\quad
\text{and}
\quad
|y'|\le \frac{5+2\sqrt{6}+9}{2\sqrt{6}}<4.
\]
Since $9+6y'^2=x'^2$ is square, $y'$ determines only $y'=0$.
Hence $x'=\pm 3$ and these are associates in $\mathcal{O}_K^\ast$.
Therefore the solution of \eqref{eq:m4Pell} are $u_\ell=3(5+2\sqrt{6})^\ell$ for $\ell\in \Z$.
In particular, we have $x=(u_\ell+ \overline{u_\ell})/2=3((5+2\sqrt{6})^\ell+(5-2\sqrt{6})^\ell)/2$.
Then 
\[
    d= \frac{x-9}{6}= \frac{(5+2\sqrt{6})^\ell+(5-2\sqrt{6})^\ell-6}{4}
\]
is a positive integer (i.e., a solution of \eqref{eq:m4square}) if and only if $\ell \ge 1$.
We give a list of solutions of \eqref{eq:m4square} for $d$ less than $10^8$ in Table~\ref{table:d4}.

\begin{table}[h]
    \caption{List of $d$, the zeros of  $P^{(\alpha,\alpha)}_4(x)$ and the Christoffel numbers for 4-stiff configurations}
\begin{center}
    \begin{tabular}{lll}
    \hline
    $d$ & zeros of  $P^{(\alpha,\alpha)}_4(x)$ & $\lambda_1=\lambda_4$, $\lambda_2=\lambda_3$ \\ \hline \\[-2ex]
    2 & $\pm \frac{1}{\sqrt{2-\sqrt{2}}}$, $\pm \frac{1}{\sqrt{2+\sqrt{2}}}$ & $\frac{1}{4}$, $\frac{1}{4}$\\[1ex]
    23 & $\pm \frac{1}{\sqrt{5}}$, $\pm \frac{1}{\sqrt{45}}$ & $\frac{11}{184}$, $\frac{81}{184}$\\[1ex]
    241 & $\pm \frac{1}{\sqrt{45}}$, $\pm \frac{1}{21}$ & $\frac{125}{2651}$, $\frac{2401}{5302}$\\[1ex]
    2399 & $\pm \frac{1}{21}$, $\pm \frac{1}{\sqrt{4361}}$ & $\frac{8829}{191920}$, $\frac{87131}{191920}$\\[1ex]
    23761 & $\pm \frac{1}{\sqrt{4361}}$, $\pm \frac{1}{\sqrt{43165}}$ & $\frac{237699}{5179898}$, $\frac{1176125}{2589949}$\\[1ex]
    235223 & $\pm \frac{1}{\sqrt{43165}}$, $\pm \frac{1}{\sqrt{427285}}$ & $\frac{8546759}{186296616}$, $\frac{84601549}{186296616}$\\[1ex]
    2328481 & $\pm \frac{1}{\sqrt{427285}}$, $\pm \frac{1}{\sqrt{4229681}}$ & $\frac{115260250}{2512430999}$, $\frac{2281910499}{5024861998}$\\[1ex]
    23049599 & $\pm \frac{1}{\sqrt{4229681}}$, $\pm \frac{1}{\sqrt{41869521}}$ & $\frac{8290175641}{180708856160}$, $\frac{82064252439}{180708856160}$\\[1ex]
    \hline
 \end{tabular}
\end{center}
\label{table:d4}
\end{table}

Some examples of $4$-stiff configurations are known.
For $d=2$, $X$ is a $4$-stiff configuration if and only if $X$ is a regular $8$-gon.
For $d=23$, a tight spherical $7$-design on $S^{22}$ is a $4$-stiff configuration.
No other explicit examples of $4$-stiff configurations seem to be known.

\subsection{Degree $m=5$}
\label{sec:5-stiff}
In the same way as for the case $m=4$,
the dimensions $d$ for $m=5$ can also be determined.
We calculate \eqref{eq:lam} using \eqref{eq:P4roots} and obtain that
the Christoffel numbers of $P^{(\alpha,\alpha)}_5(x)$ are
\begin{align*}
    \lambda_1=\lambda_5&=\frac{(d+1)(d+4)(7d+2)-(d-2)(2d+7)\sqrt{10(d+1)(d+4)}}{60d(d+1)(d+2)},\\
    \lambda_2=\lambda_4&=\frac{(d+1)(d+4)(7d+2)+(d-2)(2d+7)\sqrt{10(d+1)(d+4)}}{60d(d+1)(d+2)},\\
    \lambda_3&=\frac{8(d+1)(d-1)}{15d(d+2)}.
\end{align*}
By Theorem~\ref{thm:exists_m-stiff},
this implies there exists a $5$-stiff configuration in $S^{d-1}$ 
if and only if $d=2$ or $10(d+1)(d+4)$ is a square.
We determine all pairs $(d,y)$ of integers satisfying 
\begin{equation}
    \label{eq:m5square}
    10(d+1)(d+4)=y^2.   
\end{equation}
Since \eqref{eq:m5square} implies that $y$ is divisible by $5$ and \eqref{eq:m5square} can be formed $(10d+25)^2-10y^2=225$,
we treat the generalized Pell equation
\begin{equation}
    \label{eq:m5Pell}
    \tilde{x}^2-10\tilde{y}^2=9,
\end{equation}
where $\tilde{x}=2d+5$ and $\tilde{y}=y/5$.
It is known that the fundamental unit for $\Q(\sqrt{10})$
is $u_1=3+\sqrt{10}$ and $N(u_1)=-1$.
We apply Lemma~\ref{lem:brute-force}
as $u_0=u_1^2=19+6\sqrt{10}$ and $M=9$.
Then the candidates are
\[
|x'|\le \frac{3+\sqrt{10}+9}{2}<8
\quad
\text{and}
\quad
|y'|\le \frac{3+\sqrt{10}+9}{2\sqrt{10}}<3.
\]
Since $9+10y'^2=x'^2$ is square, $y'$ determines only $y'=0,\pm 2$.
If $y'=0$, then $x'=\pm 3$ and these are associates in $\mathcal{O}_K^\ast$.
If $y'=\pm 2$, then $x'=\pm 7$.
Obviously, the pairs $\pm(7+2\sqrt{10})$ and $\pm(7-2\sqrt{10})$ are associates in $\mathcal{O}_K^\ast$, respectively.
Moreover, one can check easily that $3$, $7+ 2\sqrt{10}$ and $7- 2\sqrt{10}$ are not associates each other.
Therefore, the solution of \eqref{eq:m5Pell} are
$u_{\ell_1}=3(19+6\sqrt{10})^{\ell_1}$,
$u_{\ell_2}=(7+2\sqrt{10})(19+6\sqrt{10})^{\ell_2}$ and
$u_{\ell_3}=(7-2\sqrt{10})(19+6\sqrt{10})^{\ell_3}$ and
for $\ell_1,\ell_2,\ell_3\in \Z$.
In particular, we have $\tilde{x}=(u_{\ell_i}+ \overline{u_{\ell_i}})/2$
and
\begin{align*}
    d= \frac{\tilde{x}-5}{2}
    =&\frac{3((19+6\sqrt{10})^{\ell_1}+(19-6\sqrt{10})^{\ell_1})-10}{4},\\
    &\frac{(7+2\sqrt{10})(19+6\sqrt{10})^{\ell_2}+(7-2\sqrt{10})(19-6\sqrt{10})^{\ell_2}-10}{4},\\
    &\frac{(7-2\sqrt{10})(19+6\sqrt{10})^{\ell_3}+(7+2\sqrt{10})(19-6\sqrt{10})^{\ell_3}-10}{4}
\end{align*}
are positive integers (i.e., solutions of \eqref{eq:m5square}) if and only if $\ell_1 \ge 1$, $\ell_2 \ge 0$, $\ell_3 \ge 0$, respectively.
We give a list of solutions of \eqref{eq:m5square} for $d$ less than $10^8$ in Table~\ref{table:d5}.

\begin{table}[h]
\begin{center}
\caption{List of $d$, the zeros of  $P^{(\alpha,\alpha)}_5(x)$ and the Christoffel numbers for 5-stiff configurations}\label{table:d5}
    \begin{tabular}{lll}
    \hline
    $d$ & zeros of  $P^{(\alpha,\alpha)}_5(x)$ & $\lambda_1=\lambda_5$, $\lambda_2=\lambda_4$, $\lambda_3$ \\ \hline \\[-2ex]
    2& $\pm \sqrt{\frac{5+\sqrt{5}}{8}}, \pm \sqrt{\frac{5-\sqrt{5}}{8}},0$ & $\frac{1}{5}, \frac{1}{5}, \frac{1}{5}$\\[1ex]
    4&$\pm \frac{\sqrt{3}}{2},\pm \frac{1}{2}, 0$& $\frac{1}{12}, \frac{1}{4}, \frac{1}{3}$\\[1ex]
    26&$\pm \frac{1}{2},\pm \frac{1}{4},0$& $\frac{5}{273}, \frac{64}{273}, \frac{45}{91}$\\[1ex]
    124& $\pm \frac{1}{4},\pm \sqrt{\frac{3}{208}},0$ & $\frac{41}{3255}, \frac{2197}{9765}, \frac{1025}{1953}$\\[1ex]
    241& $\pm \sqrt{\frac{3}{91}},\pm \frac{1}{\sqrt{133}}, 0$ & $\frac{30976}{58563}, \frac{15379}{1288386}, \frac{48013}{214731}$\\[1ex]
    1079& $\pm \frac{1}{\sqrt{133}},\pm \frac{1}{\sqrt{589}}, 0$ & $\frac{620928}{1166399}, \frac{319333}{27993576}, \frac{6226319}{27993576}$\\[1ex]
    4801& $\pm \frac{1}{\sqrt{589}},\pm \sqrt{\frac{3}{7843}}, 0$ & $\frac{12293120}{23059203}, \frac{4252580}{376633649}, \frac{502022587}{2259801894}$\\[1ex]
    9244& $\pm \sqrt{\frac{3}{3400}},\pm \frac{1}{\sqrt{5032}}, 0$ & $\frac{1898923}{3561251}, \frac{3453125}{306267586}, \frac{68026979}{306267586}$\\[1ex]
    41066& $\pm \frac{1}{\sqrt{5032}},\pm \frac{1}{\sqrt{22348}}, 0$ & $\frac{112427757}{210812311}, \frac{277761368}{24665040387}, \frac{5477735041}{24665040387}$\\[1ex]
    182404& $\pm \frac{1}{\sqrt{22348}},\pm \sqrt{\frac{3}{297772}}, 0$ & $\frac{739360427}{1386316001}, \frac{11924172077}{1059145424764}, \frac{235212857191}{1059145424764}$\\[1ex]
    351121& $\pm \sqrt{\frac{3}{129055}},\pm \frac{1}{\sqrt{191065}}, 0$ & $\frac{65752510208}{123286658883}, \frac{581549060605}{51657110071977}, \frac{7647903391205}{34438073381318}$\\[1ex]
    1559519& $\pm \frac{1}{\sqrt{191065}},\pm \frac{1}{\sqrt{848617}}, 0$ & $\frac{1297119739392}{2432102630399}, \frac{24970041242125}{2218077598923888}, \frac{492582157057067}{2218077598923888}$\\[1ex]
    6926641& $\pm \frac{1}{\sqrt{848617}},\pm \sqrt{\frac{3}{11307439}}, 0$ & $\frac{25588456289536}{47978369396163}, \frac{670100637133543}{59525163630839562}, \frac{19828663190016109}{89287745446259343}$\\[1ex]
    13333444& $\pm \sqrt{\frac{3}{4900636}},\pm \frac{1}{\sqrt{7255420}}, 0$ & $\frac{11852048593409}{22222594446003}, \frac{1634104921847879}{145157986921291596}, \frac{10745365944241375}{48385995640430532}$\\[1ex]
    59220746& $\pm \frac{1}{\sqrt{7255420}},\pm \frac{1}{\sqrt{32225080}}, 0$ & $\frac{233806450453101}{438387109404751}, \frac{21926672426094515}{1947753927085308693}, \frac{432549261434995960}{1947753927085308693}$\\[1ex]    
    \hline
 \end{tabular}
\end{center}
\end{table}

Note that the only known explicit example of a 5-stiff configuration is a regular $10$-gon in $S^1$.

\section{Newton polygon method}
\label{sec:Newton}
Using the Newton polygon method, we prove the non-existence of $m$-stiff configurations in $S^{d-1}$ for small values of $m$ or $d$.  

For a polynomial with integer coefficients, the Newton polygon method \cite{D06} provides a necessary condition on the coefficients for all zeros of the polynomial to be integers.  
We recall the method here.  
Let $F(x) = \sum_{i=0}^k a_i x^{k-i}$ be a polynomial with integer coefficients $a_i$, and define $c_i(p) = {\rm ord}_p(a_i)$ for a prime number $p$.  
Plot the points $(0,0), (1,c_0(p)),(2,c_1(p)),\ldots, (k+1,c_k(p))$ in the Cartesian plane, and take the lower convex hull of these points, which is called the Newton polygon.  
If all the zeros of $F(x)$ are integers, then all the slopes of the Newton polygon must be integers for each prime number $p$. 

\subsection{Non-existence for $m=6,7,8,9,10$}
For $m=6,7,8,9,10$, we prove there exists no $m$-stiff configuration in any dimensions. 

Bannai and Bannai \cite{BB14} provided a necessary condition for all zeros of $S_m(X)$ to be rational numbers by using the Newton polygon method.  
We outline a sketch of their argument here.  
Assume that the zeros of $S_m(X)$ are rational and write them as $p_i/q_i$ ($i=1,\ldots, n$), where $p_i, q_i \in \mathbb{Z}$ and $n=\lfloor m/2 \rfloor$.  
Define $y_i$ as  
\[
y_i={\rm lcm}(q) \frac{p_i}{q_i}, \qquad \text{where ${\rm lcm}(q)={\rm lcm}\{q_1,\ldots, q_n\}$}.  
\]
Then, $y_1, \ldots, y_n$ are the zeros of a certain polynomial $f(x)=\sum_{i=0}^n a_i x^{n-i}$ with integer coefficients $a_i$; see \cite[equations (5.10), (5.11), (5.12)]{BB14} for the precise expressions of $a_i$.  

We apply the Newton polygon method to the polynomial $f(x)$ for $m\geq 6$ (equivalently, $n\geq 3$).  
Let $h=d+2n+1+(-1)^{m-1}$ as previously defined.  
For a prime number $p$ such that $p>2n+(-1)^{m-1}$ and $h+2(n-2)$ is divisible by $p$, we can show that  
\[
c_0(p) = c_1(p) = \cdots = c_{n-2}(p) = 0,  
\]
and  
\[
c_{n-1}(p) = c_n(p) = {\rm ord}_p(h+2(n-2)) > 0.  
\]
Similarly, for a prime number $p'$ such that $p'>2n+(-1)^{m-1}$ and $h+2(n-3)$ is divisible by $p'$, we can show that  
\[
c_0(p') = c_1(p') = \cdots = c_{n-3}(p') = 0,  
\]
and  
\[
c_{n-2}(p') = c_{n-1}(p') = c_n(p') = {\rm ord}_{p'}(h+2(n-3)) > 0.  
\]
Since the slopes of the Newton polygon must be integers,  
$c_{n}(p)$ is even and $c_{n}(p')$ is a multiple of $3$.  

We express  
\[
h+2(n-2) = Ay^2 \quad \text{and} \quad h+2(n-3) = Bx^3,
\]
where $A, B, x$, and $y$ are positive integers, $A$ is a product of distinct prime numbers, and $B$ is a product of prime numbers, each raised to an exponent of at most $2$.  
From the above argument, any prime factor of $A$ and $B$ is at most $2n+(-1)^{m-1}$, and hence they have only finitely many possible values depending only on $m$.  

For given integers $A$ and $B$, we consider the Diophantine equation  
\begin{equation}\label{eq:Ay^2-Bx^3=2}
Ay^2 - Bx^3 = 2,
\end{equation}
which has only finitely many integer solutions \cite{B69}; see \cite[Chapter 28]{Mbook}.  
For each solution $(x,y)$, we obtain a candidate for the dimension as  
\[
d = Ay^2 - 4n + 3 + (-1)^{m-1}.
\]

For $m = 6,7,8,9,10$, we solve the possible Diophantine equations \eqref{eq:Ay^2-Bx^3=2} via computer computations using Magma \cite{Magma} (version 2.28-19).  
Each prime number at most $2n+(-1)^{m-1}$ is at most $7$ for $m \leq 10$.  
Therefore, we select $A$ and $B$ from the sets  
\begin{equation} \label{eq:list_of_AB}
A \in \{2^{r_1} 3^{r_2} 5^{r_3} 7^{r_4} \colon r_1,r_2,r_3,r_4 \in \{0,1\}\}, \  
B \in \{2^{r_1} 3^{r_2} 5^{r_3} 7^{r_4} \colon r_1,r_2,r_3,r_4 \in \{0,1,2\}\}.    
\end{equation}

Applying the variable transformation $Y = A^2 B y$ and $X = ABx$ to \eqref{eq:Ay^2-Bx^3=2}, we obtain a Weierstrass standard form  
\begin{equation} \label{eq:satandard_form}
Y^2 = X^3 + 2 A^3 B^2.     
\end{equation}  
This type of elliptic equation can be solved if a set of generators of the Mordell-Weil group can be computed \cite{ST94}.  
Using the built-in Magma function \texttt{MordellWeilGroup}, we can compute a complete set of generators of the Mordell-Weil group for \eqref{eq:satandard_form} for each pair of $A$ and $B$ from \eqref{eq:list_of_AB}.  
For some larger values of $A$ and $B$, this built-in function fails to compute the complete set of generators.  
Indeed, such cases occur when $m = 11$.  

We compute all integer solutions of \eqref{eq:satandard_form} using the Magma function \texttt{IntegralPoints} with the option \texttt{SafetyFactor := 10}, which expands the search range and reinforces the accuracy of the results.  

For each candidate pair $(d,m)$ obtained from the above computation, we verify that $S_m(X)$ cannot be factored into a product of linear polynomials.  
Based on these computational results, we state the following theorem.  

\begin{theorem}
    If $d > 2$ and $m = 6,7,8,9,10$,  
    then no $m$-stiff configuration exists in $S^{d-1}$.
\end{theorem}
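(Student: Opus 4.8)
The plan is to reduce the existence question to the rationality of the zeros of $S_m(X)$, and then to eliminate that rationality for every $d>2$ by combining the Newton polygon method with the resolution of finitely many Diophantine equations. By Theorem~\ref{thm:exists_m-stiff} together with Lemma~\ref{lem:1} and the fact that $P_m^{(\alpha,\alpha)}(x)$ is even or odd, the existence of an $m$-stiff configuration in $S^{d-1}$ forces every zero of $S_m(X)$ to be rational. Hence it suffices to show that, for each $m\in\{6,7,8,9,10\}$ and every $d>2$, the polynomial $S_m(X)$ possesses an irrational zero.

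First I would apply the Newton polygon method to the integer polynomial $f(x)=\sum_{i=0}^n a_i x^{n-i}$ with $n=\lfloor m/2\rfloor\ge 3$ obtained by clearing denominators in $S_m(X)$, whose zeros are rational exactly when those of $S_m(X)$ are. The essential arithmetic input is the behaviour of the valuations $c_i(p)={\rm ord}_p(a_i)$ at two families of primes. For a prime $p>2n+(-1)^{m-1}$ dividing $h+2(n-2)$, one computes $c_0(p)=\cdots=c_{n-2}(p)=0$ and $c_{n-1}(p)=c_n(p)={\rm ord}_p(h+2(n-2))>0$, so the integrality of the Newton-polygon slopes forces ${\rm ord}_p(h+2(n-2))$ to be even; an analogous computation at a prime $p'$ dividing $h+2(n-3)$ forces ${\rm ord}_{p'}(h+2(n-3))$ to be a multiple of $3$. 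Writing $h+2(n-2)=Ay^2$ with $A$ squarefree and $h+2(n-3)=Bx^3$ with $B$ a product of primes each of exponent at most $2$, all prime factors of $A$ and $B$ are bounded by $2n+(-1)^{m-1}\le 7$, so $A$ and $B$ range over the explicit finite lists in \eqref{eq:list_of_AB}.

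Subtracting the two displayed relations yields the Thue--Mordell equation \eqref{eq:Ay^2-Bx^3=2}, namely $Ay^2-Bx^3=2$, which has only finitely many integer solutions \cite{B69}. For each admissible pair $(A,B)$ I would pass to the Weierstrass model \eqref{eq:satandard_form} via $Y=A^2By$ and $X=ABx$, compute a full set of generators of the Mordell--Weil group and enumerate all integral points using Magma, and read off from $h+2(n-2)=Ay^2$ the resulting finite list of candidate dimensions $d$. Finally, for each surviving candidate $(d,m)$ I would verify directly that $S_m(X)$ does not factor into linear polynomials, which completes the non-existence proof.

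The main obstacle is the elliptic-curve step: certifying a complete generating set of the Mordell--Weil group, and hence all integral points, for every curve $Y^2=X^3+2A^3B^2$ attached to the finitely many pairs $(A,B)$. For the ranges arising when $m\le 10$ this is within reach of Magma, but the very same method begins to fail at $m=11$, where the built-in routines can no longer certify the full Mordell--Weil group; this is exactly why the theorem is stated only up to $m=10$.
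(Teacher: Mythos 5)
Your proposal is correct and follows essentially the same route as the paper: reduction to rationality of the zeros of $S_m(X)$ via Theorem~\ref{thm:exists_m-stiff} and Lemma~\ref{lem:1}, the Newton polygon valuation analysis forcing $h+2(n-2)=Ay^2$ and $h+2(n-3)=Bx^3$ with prime factors of $A,B$ at most $7$, the resulting equation $Ay^2-Bx^3=2$ solved via the Weierstrass model and Magma, and a final factorization check on the surviving candidates. The only quibble is terminological: $Ay^2-Bx^3=2$ is a Mordell-type (hyperelliptic) equation rather than a Thue equation, but this does not affect the argument.
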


\subsection{Non-existence for $d=6$ and $m=2n$ even}
\begin{theorem}
    There exists a $2n$-stiff configuration in $S^{5}$ if and only if $n=1$.
\end{theorem}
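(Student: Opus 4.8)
The plan is to convert the existence question into an integrality statement about the zeros of the polynomial $S_{2n}(X)$ attached to $S^5$ and then to eliminate every $n\ge 2$. Since $S^5$ has ambient dimension $6$, the relevant Gegenbauer polynomial is $C_{2n}^{2}(x)=P_{2n}^{(3/2,3/2)}(x)$ (up to a constant), so I take the parameter $d'=4$ in the Bannai--Damerell expression for $S_{2n}(X)$. By Theorem~\ref{thm:exists_m-stiff} a $2n$-stiff configuration in $S^5$ exists if and only if all Christoffel numbers of $P_{2n}^{(3/2,3/2)}(x)$ are positive rationals; Lemma~\ref{lem:1} then forces every zero to be rational or quadratic irrational, hence every squared zero is rational (as $P_{2n}$ is even), so every zero of $S_{2n}(X)$ is rational, and Lemma~\ref{lem:m=2n} upgrades this to integrality. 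Thus existence for a given $n$ forces the monic polynomial $S_{2n}(X)$ to split into distinct positive integer factors; conversely $n=1$ does occur, since the regular cross-polytope is a $2$-stiff configuration in every $S^{d-1}$. It therefore remains to show that $S_{2n}(X)$ admits no such splitting when $n\ge 2$.

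First I would make the coefficients explicit. With $h=d'+2n=2n+4$, the definition of $u_r$ simplifies (consistently with \eqref{eq:u_n2}) to
\[
u_r=\frac{2^{2r}(2r+1)}{n+1}\binom{n+r+1}{2r+1}\qquad(1\le r\le n),\qquad u_n=\frac{2^{2n}(2n+1)}{n+1}.
\]
A monic polynomial with integer zeros has integer coefficients, so existence forces $u_n\in\Z$; since $\gcd(n+1,2n+1)=1$ this gives $(n+1)\mid 2^{2n}$, i.e.\ $n+1=2^{a}$. This is the first reduction: only $n=2^{a}-1$ survive, and $n\ge 2$ means $a\ge 2$. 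Unlike the $S^3$ case, where $u_n=2^{2n}$ is a pure power of $2$ and a size bound on a product of distinct powers of $2$ finishes the job, here the odd factor $2n+1=2^{a+1}-1$ blocks the elementary product argument, which is exactly why the Newton polygon method is needed.

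For $n=2^{a}-1$ I would apply the Newton polygon method at $p=2$, writing $u_r=2^{2r-a}(2r+1)\binom{2^{a}+r}{2r+1}$. A short computation with Kummer's theorem gives the low-order valuations
\[
{\rm ord}_2(u_0,u_1,u_2,u_3)=(0,1,3,4).
\]
Let $L$ be the $2$-adic Newton polygon (the lower convex hull of the points $(r,{\rm ord}_2 u_r)$) and let $w_1\le\cdots\le w_n$ be its successive slopes; if all zeros were integers these slopes would be their non-negative integer $2$-adic valuations and $L(r)=w_1+\cdots+w_r\in\Z$. Suppose I can establish the uniform lower bound ${\rm ord}_2 u_r\ge (3r-1)/2$ for all $1\le r\le n$. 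Then $\phi(r)=\max\{r,(3r-1)/2\}$ is convex and lies below every plotted point, hence $\phi\le L$, so $L(2)\ge\phi(2)=5/2$; combined with $L(2)\le{\rm ord}_2 u_2=3$ and $L(2)\in\Z$ this forces $L(2)=3$, whence $w_1+w_2=3$ with $w_1\le w_2$ gives $w_2\ge 2$ and so $w_3\ge 2$. Then $L(3)=3+w_3\ge 5$, contradicting $L(3)\le{\rm ord}_2 u_3=4$. Thus some slope of $L$ is not an integer, the zeros are not all integers, and the argument closes uniformly for every $a\ge 2$.

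The hard part is precisely the uniform valuation bound ${\rm ord}_2 u_r\ge (3r-1)/2$, equivalently
\[
{\rm ord}_2\binom{2^{a}+r}{2r+1}\ge a-\frac{r+1}{2}\qquad(1\le r\le 2^{a}-1).
\]
By Kummer's theorem this is a statement about the number of base-$2$ carries in the addition $(2r+1)+(2^{a}-r-1)=2^{a}+r$, and it is automatic once $r\ge 2a-1$ (the right-hand side being $\le 0$); the content is the range $1\le r\le 2a-2$, where one must show that the complementary summand $2^{a}-r-1=(2^{a}-1)-r$ — namely a base-$2$ string of $a$ ones minus the small number $r$ — generates at least $a-(r+1)/2$ carries. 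I expect this carry-counting estimate, which must hold uniformly in $a$, to be the delicate point: the values at $r=1,2,3$ produce exactly $a-1,\,a-1,\,a-2$ carries, so the bound is already tight at $r=1$ and $r=3$, and the estimate must therefore be proved carefully rather than with a lossy inequality.
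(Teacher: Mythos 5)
Your overall route is the same as the paper's: reduce existence to integrality of the zeros of $S_{2n}(X)$ with $d'=4$, use $u_n=2^{2n}(2n+1)/(n+1)$ to force $n+1=2^a$, compute $\operatorname{ord}_2(u_0,u_1,u_2,u_3)=(0,1,3,4)$, and kill the Newton polygon at $p=2$ with a uniform lower bound on $\operatorname{ord}_2 u_r$. (The paper phrases the endgame as exhibiting an edge of slope $3/2$ between the points corresponding to $u_1$ and $u_3$, while you argue via integrality of $w_1+w_2$ and $w_1+w_2+w_3$; these are equivalent.) The one genuine gap is exactly the step you flag yourself: the uniform bound $\operatorname{ord}_2 u_r\ge (3r-1)/2$ is asserted, set up as a carry-counting problem for $\binom{2^a+r}{2r+1}$, and then left unproved, so as written the argument does not close.

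The gap is real but easily filled, and the paper's way of filling it is worth noting because it is a one-liner: writing
\[
u_r=\binom{n}{r}\frac{(2n+4)(2n+6)\cdots(2n+2r+2)}{1\cdot 3\cdots(2r-1)}
=2^{r}\binom{n}{r}\frac{(n+2)(n+3)\cdots(n+r+1)}{1\cdot 3\cdots(2r-1)},
\]
the double factorial is odd, $\binom{n}{r}\in\mathbb{Z}$, and $r$ consecutive integers contain at least $\lfloor r/2\rfloor$ even ones, so $\operatorname{ord}_2 u_r\ge r+\lfloor r/2\rfloor\ge (3r-1)/2$ with no case analysis and no dependence on $a$. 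Your own formulation also closes, but you should actually carry it out: Kummer's theorem gives $\operatorname{ord}_2\binom{2^a+r}{2r+1}=s_2(2r+1)+s_2(2^a-1-r)-s_2(2^a+r)=a-s_2(r)$ for $1\le r\le 2^a-1$, and the required inequality $a-s_2(r)\ge a-(r+1)/2$ reduces to $2\,s_2(r)\le r+1$, which follows from $r\ge 2^{s_2(r)}-1$ together with $2k\le 2^k$. Either patch completes your proof; without one of them the "delicate point" you identify is left open.
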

\begin{proof}
We prove the non-existence of $2n$-stiff configurations in $S^{5}$ for $n\ge 2$.
For $d=6$, we consider the constant term $u_n=u_{n,d'}^-$ of $S_{2n}(X)$ for $d'=4$.
From equation \eqref{eq:u_n2} we have
\[
u_n=2^{2n}\frac{2n+1}{n+1}=2^{2n}\left(2-\frac{1}{n+1}\right)
\]
for $n\ge 1$.
This implies that $u_n$ is an integer if and only if $n=2^l-1$ for $l\ge 1$.
When $n = 2^l - 1$ for $l \ge 2$, and the coefficients $u_r$ ($1 \le r \le n$) are integers, we show that the Newton polygon of $S_{2n}(X)$ with respect to $p = 2$ has at least one non-integer slope. 
Let $c_{i}(p) = {\rm ord}_p(u_{i})$ for $1\le i \le n$ and $c_0(p)={\rm ord}_p(1)=0$.
By 
$u_1=n(2n+4)=2(2^l-1)(2^l+1)$,
we have $c_{1}(2)=1$,
Similarly, 
we have $c_{2}(2)=3$ and $c_{3}(2)=4$.
Moreover, by
\[
u_r = \binom{n}{r}\frac{(2n+4)(2n+6)\cdots (2n+2r+2)}{1\cdot 3\cdots (2r-1)}
=2^r \binom{n}{r}\frac{(n+2)(n+3)\cdots (n+r+1)}{1\cdot 3\cdots (2r-1)}
\]
and the product of $r$ consecutive integers $(n+2)(n+3)\cdots (n+r+1)$ contains at least $\lfloor \frac{r}{2} \rfloor$ even integers,
we have $c_{r}(2)\ge r+\lfloor \frac{r}{2} \rfloor$.
This implies that the points $(r+1,c_{r}(2))$ are not below the line $y=\frac{3}{2}x-2$ for $r\ge 1$.
Then, the Newton polygon for $(0,0),(1,c_0(2)),\ldots ,(n+1,c_n(2))$
has the edge between $(2,1)$ and $(4,4)$ whose slope is $3/2$ (see Figure~\ref{fig:Newton_polygon}).
Therefore, there exists a non-integer zero of $S_{2n}(X)$.
\begin{figure}[htbp]
    \centering
\begin{tikzpicture}[xscale = 0.7, yscale = 0.7]
    \draw[thick, ->] (0,0) -- (5,0) node [below] {$x=r$};
    \draw[thick, ->] (0,0) -- (0,5) node [left] {$y=c_{r-1}(2)$};
    \node at (0,0) [anchor=north east] {O};
    \foreach \x in {1,2,3,4}
        \draw (\x ,0) node[anchor=north] {$\x$};
    \foreach \y in {1,2,3,4}
        \draw (0,\y) node[anchor=east] {$\y$};
    \draw [help lines] (0,0) grid (5,5);
    \coordinate (O) at (0,0);
    \coordinate (u0) at (1,0);
    \coordinate (u1) at (2,1);
    \coordinate (u2) at (3,3);
    \coordinate (u3) at (4,4);
    \coordinate (u4) at (4.666,5);
    \fill (O) circle (2pt);
    \fill (u0) circle (2pt);
    \fill (u1) circle (2pt);
    \fill (u2) circle (2pt); 
    \fill (u3) circle (2pt); 
    \draw (O)--(u0)--(u1)--(u3);
    \draw[dashed] (u3)--(u4);
\end{tikzpicture}
\caption{The Newton polygon for $S_{2n}(X)$ with $d'=4$ and $p=2$}
\label{fig:Newton_polygon}
\end{figure}
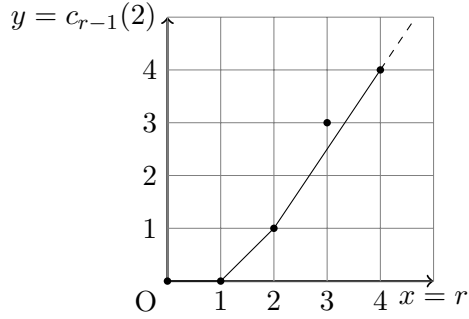
\end{proof}

\section{Concluding remarks}
The present paper has primarily established the non-existence of $m$-stiff configurations for sufficiently large $m$.  
The key idea was the analysis of the zeros of the Gegenbauer polynomial, inspired by the method in \cite{BD79, BD80}, which aimed at proving the non-existence of tight spherical $t$-designs.  
If we fix a small dimension $d$, we can, in principle, classify the $m$-stiff configurations.  
Based on the results concerning the existence of $m$-stiff configurations presented in this paper, we propose the following conjecture. 
\begin{conjecture}
Let $d$ be an integer greater than $2$.  
    There exists an $m$-stiff configurations in $S^{d-1}$ if and only if $m=2,3$, $(d,m)=(d_4,4)$ with  
    \[
    d_4= \frac{(5+2\sqrt{6})^\ell+(5-2\sqrt{6})^\ell-6}{4} 
    \]
    for $\ell \in \mathbb{N}$ with $\ell>1$, or $(d,m)=(d_5,5)$ with
    \begin{align*}
        d_5
    =&\frac{3((19+6\sqrt{10})^{\ell_1}+(19-6\sqrt{10})^{\ell_1})-10}{4},\\
    &\frac{(7+2\sqrt{10})(19+6\sqrt{10})^{\ell_2}+(7-2\sqrt{10})(19-6\sqrt{10})^{\ell_2}-10}{4},\\
    &\frac{(7-2\sqrt{10})(19+6\sqrt{10})^{\ell_3}+(7+2\sqrt{10})(19-6\sqrt{10})^{\ell_3}-10}{4}
    \end{align*}
    for $\ell_1, \ell_2,\ell_3 \in \mathbb{N}$. 
\end{conjecture}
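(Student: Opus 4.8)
The plan is to isolate the only genuinely open part of the conjecture and recast it as a single Diophantine statement. The ``if'' direction is already complete: $2$- and $3$-stiff configurations exist in every dimension, and the Pell equations \eqref{eq:m4Pell} and \eqref{eq:m5Pell} were solved in full, so the displayed families $(d_4,4)$ and $(d_5,5)$ are exactly the dimensions admitting $4$- and $5$-stiff configurations. Thus only the ``only if'' direction remains, namely that no $m$-stiff configuration exists when $m\ge 6$ and $d\ge 3$. By Theorem~\ref{thm:exists_m-stiff}, together with the positivity of each $a_0(\varphi_k)=\lambda_k$ recorded in \eqref{eq:lam^-1}, existence is equivalent to the rationality of all Christoffel numbers; by Lemma~\ref{lem:1} and the fact that $P_m^{(\alpha,\alpha)}$ is even or odd, this forces $x_i^2\in\mathbb{Q}$ for every zero, i.e.\ forces the associated polynomial $S_m(X)$ to split completely over $\mathbb{Q}$. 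Since we only need the contrapositive, the entire remaining conjecture reduces to the assertion that, for every $m\ge 6$ and every admissible dimension, $S_m(X)$ has an irrational zero.

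First I would collect the finiteness already available. Result (1) (Lemma 3.1 of \cite{BB14}) bounds $d<C(m)$ for each fixed $m>5$, the explicit analysis settles $6\le m\le 10$ for all $d$, and Theorems~\ref{thm:m_for_2d}--\ref{thm:odd_d=10} together with Section~\ref{sec:Newton} settle $3\le d\le 120$ for all $m$; what survives is the two-dimensional region $m\ge 11$, $d\ge 121$. On this region I would sharpen the Newton-polygon argument of Section~\ref{sec:Newton}. Applying the method to $f(x)$ for the primes dividing $h+2(n-1-j)$ (not only $j=1,2$, as in \cite{BB14}) produces $j+1$ consecutive equal coefficients $c_{n-j}(p)=\cdots=c_n(p)$, and integrality of the resulting slope forces $h+2(n-1-j)$ to be a $(j+1)$-st perfect power, up to a factor all of whose prime divisors are at most $2n+(-1)^{m-1}$. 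This yields a whole cascade of power conditions on the terms $h,h+2,\ldots,h+2(n-2)$ of a single arithmetic progression. I would then feed the resulting simultaneous equations into effective results on perfect powers in arithmetic progressions and on Mordell-type equations $Ay^2-Bx^3=2$ (in the spirit of \cite{B69} and of Gy\H{o}ry--Hajdu--Pint\'er), aiming to bound all surviving pairs $(d,m)$ and then to clear the finite remainder by the factorization check used throughout Section~\ref{sec:3}.

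The hard part will be uniformity and, above all, \emph{computational} effectivity as $m\to\infty$. Each individual equation \eqref{eq:Ay^2-Bx^3=2} has only finitely many solutions, but the set of admissible smooth pairs $(A,B)$ grows with $n$ (their prime factors may be as large as $2n+1$), and the authors already observe that \texttt{MordellWeilGroup} fails at $m=11$; so continuing the elliptic-curve computation family by family is hopeless, while a Baker-type effective bound---although available in principle---is far too large to verify by machine. The genuine obstacle is therefore to replace the case-by-case resolution of these equations by one uniform argument. The most promising route seems to be to exploit the \emph{simultaneity} of the power conditions above---several consecutive terms of one arithmetic progression being perfect powers of increasing degree is vastly more restrictive than any one condition alone---so that the surviving $m$ are bounded by an argument that does not require solving each Mordell curve separately. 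Whether such a uniform bound can be made both effective and small enough to finish by computation is exactly the point at which a full proof currently stalls, which is why the statement is offered as a conjecture.
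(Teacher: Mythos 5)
The statement you are addressing is stated in the paper as a \emph{conjecture}: the authors offer no proof of it, and their concluding remarks explicitly say that while the Newton polygon method might eventually settle it, ``the situation is not straightforward.'' So there is no proof in the paper to compare yours against, and your proposal --- by its own honest admission in the final paragraph --- is not a proof either. Your consolidation of what is known is accurate and matches the paper's logic: Theorem~\ref{thm:exists_m-stiff} together with Lemma~\ref{lem:1} and the parity of $P_m^{(\alpha,\alpha)}$ reduces existence to $S_m(X)$ splitting over $\mathbb{Q}$ (with the appropriate integrality refinements from Lemmas~\ref{lem:m=2n} and~\ref{lem:m=2n+1}); the cases $6\le m\le 10$ (all $d$) and small $d$ (even $d\le 120$, odd $d\le 1999$) are settled in the paper; and the genuinely open region is $m\ge 11$ with $d$ large. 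Your description of the ``if'' direction is also correct: the Pell-equation analysis in Section~5 completely determines the dimensions for $m=4,5$, and $m=2,3$ work in every dimension.

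The gap is therefore exactly the conjecture itself, and your proposed route does not close it. Two specific points deserve flagging. First, your cascade of power conditions --- that $p\mid h+2(n-1-j)$ with $p>2n+(-1)^{m-1}$ forces $c_{n-j}(p)=\cdots=c_n(p)$ and hence forces $h+2(n-1-j)$ to be a $(j+1)$-st power times a smooth factor --- is only verified in the paper (following \cite{BB14}) for $j=1,2$; extending it to all $j$ requires checking that the relevant binomial-type coefficients of $f(x)$ really have the claimed $p$-adic valuations for every $j$, which is plausible but not automatic. Second, even granting the cascade, the step ``feed the simultaneous power conditions into effective results on perfect powers in arithmetic progressions'' is precisely where no uniform, computationally feasible bound is currently known: the smooth factors $A,B$ range over a set growing with $n$, each Mordell-type equation \eqref{eq:Ay^2-Bx^3=2} must in practice be solved separately, and the paper already reports that the elliptic-curve machinery fails at $m=11$. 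You have correctly located the obstruction, but a plan that terminates in ``this is why the statement is offered as a conjecture'' should be presented as a research programme, not as a proof proposal.
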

Bannai and Damerell \cite{BD80} showed the non-existence of tight spherical $(2m-1)$-designs in $S^{d-1}$ for any $m \geq 5$ and $d \geq 3$, except for $(m,d) = (6,24)$, mainly using the Newton polygon method applied to the polynomial $S_m(X)$.
In their setting, the zeros of $S_m(X)$ are required to be square integers, which is a stronger condition than ours.
Although it might be possible to prove our conjecture by applying the Newton polygon method, the situation is not straightforward.

We are interested in the structural relationship among three objects: spherical $t$-designs, Euclidean $t$-designs, and $m$-stiff configurations.  
As an illustrative example, we consider the following case.  
The tight spherical 7-design $X$ on $S^{22}$ is a $4$-stiff configuration~\cite{BDHSSarXiv}. 
This was essentially known earlier in \cite{BBS12}, although the 
terminology 4-stiff was not used explicitly. 
Indeed, the set $X$ can be partitioned into four subsets  
\[
X = X_1 \cup X_2 \cup X_3 \cup X_4,
\]
such that there exists a point $x_0 \in S^{22}$ satisfying  
\[
\langle x_0 , x \rangle = -\frac{1}{\sqrt{5}}, -\frac{1}{\sqrt{45}}, \frac{1}{\sqrt{45}}, \frac{1}{\sqrt{5}}
\]
for $x \in X_1, X_2, X_3, X_4$, respectively, where $|X_1| = |X_4| = 275$ and $|X_2| = |X_3| = 2025$.  
The sets $X_i$ are embedded into the sphere $S^{21}$, and we denote their images by $X_i'$.  
Then, for some positive scalars $r_1, r_2 > 0$ and appropriate weights, the union $r_1 X_1' \cup r_2 X_2'$ forms a tight Euclidean $6$-design in $\mathbb{R}^{22}$ \cite{BBS12}. 
The sets $X_1'$ and $X_2'$ are both spherical 4-designs (with $X_1'$ being tight), and the numbers of distances between distinct points are 2 and 3, respectively.  
Under certain assumptions, if we can identify such a situation, the non-existence of tight Euclidean $t$-designs or $m$-stiff configurations may allow us to infer the non-existence of tight spherical $t$-designs whose existence is currently unknown.

\bigskip

\noindent
\textbf{Acknowledgments.} 
H. Kurihara was partially supported by JSPS KAKENHI Grant Number 24K06830. 
H. Nozaki was partially supported by JSPS KAKENHI Grant Numbers 22K03402 and 24K06688. 
We are grateful to the 
referees for their comments.

\end{document}